\title{The complexity of Ford domains of $\Gamma_0(N)$}
\author{Pengcheng Zhang}
\address{Max Planck Institute for Mathematics, Vivatsgasse 7, 53111 Bonn, Germany}
\email{pzhang@mpim-bonn.mpg.de}
\date{\today}
\keywords{Ford fundamental domains, congruence subgroups}
\subjclass[2020]{11F06}
\newtheorem{theorem}{Theorem}[section]
\newtheorem{lemma}[theorem]{Lemma}
\newtheorem{proposition}[theorem]{Proposition}
\newtheorem{corollary}[theorem]{Corollary}
\theoremstyle{remark}
\newtheorem{remark}[theorem]{Remark}
\def\MR#1{}
\newcommand\sm[1]{\begin{psmallmatrix}#1\end{psmallmatrix}}
\newcommand\cl[1]{\mathcal{#1}}
\newcommand{\Z}{\mathbb{Z}}
\newcommand{\C}{\mathbb{C}}
\newcommand{\clh}{\mathcal{H}}
\newcommand{\eq}{\;=\;}
\newcommand{\deq}{\;:=\;}
\def\thin{{\hskip 1pt}}
\newcommand{\hash}{\texttt{\#}}
\newcommand{\divides}{\thin|\thin}
\renewcommand{\Re}{\operatorname{Re}}
\begin{document}

\begin{abstract}
We investigate a particular choice of the Ford fundamental domain of the congruence subgroup $\Gamma_0(N)$ and define a notion of complexity $c(N)$ accordingly, which is a nonnegative integer and carries some information on the shape of the Ford domain. The property that $c(N)=0$ first appeared as a technical assumption in a paper by Pohl, which is closely related to a conjecture of Zagier on the ``reduction theory'' of $\Gamma_0(N)$. In this paper, we give a complete classification of positive integers $N$ with $c(N)=0$, and we also show that $c(N)$ goes to infinity if both the number of distinct prime factors of $N$ and the smallest prime factor of $N$ go to infinity.
\end{abstract}

\maketitle

\setcounter{tocdepth}{1}
\tableofcontents

\section{Introduction}
\label{intro}

Let $N$ be a positive integer. The object of interest to us is the region
\begin{align*}
    \cl{R}_N\deq \bigcup_{\substack{N\divides b,\; \gcd(a,b)=1, \\ 0\leq \frac{a}{b}\leq 1}}\bigg\{z\in\C\;\bigg|\; \big|z-\tfrac{a}{b}\big|\leq \tfrac{1}{b}\bigg\}.
\end{align*}
The consideration of this region is motivated by the construction of Ford fundamental domains, which we briefly recall here.

For a Fuchsian group $\Gamma$ with a cusp at $\infty$, one can construct a Ford fundamental domain for $\Gamma$ in the following way. One chooses a fundamental domain $\cl{F}_\infty\subseteq\clh$ for the stabilizer group $\Gamma_\infty$ of $\infty$ and considers
\begin{align*}
    \cl{R}_\Gamma\deq\bigcup_{\sm{a&b\\c&d}\in\Gamma\thin\setminus\thin\Gamma_\infty}\bigg\{z\in\C\;\bigg|\; \big|z+\tfrac{d}{c}|\leq \tfrac{1}{c}\bigg\}.
\end{align*}
Then,
\begin{align*}
    \cl{F}_\Gamma\deq \cl{F}_\infty \setminus (\cl{R}_\Gamma\cap\clh)
\end{align*}
is (roughly) a Ford fundamental domain of the Fuchsian group $\Gamma$, where $\clh$ denotes the upper half plane. For a detailed discussion of this construction, we refer to \cite[Section~3.3]{katok-fuchsian} or \cite[Section~2.2]{pohl14}.

From this point of view, the region $\cl{R}_N$ that we are interested in should correspond to the case when $\Gamma$ is the congruence subgroup $\Gamma_0(N)$. In this case, $\Gamma_\infty=\langle\sm{1&1\\0&1}\rangle$ and one common choice of $\cl{F}_\infty$ is given by $\{x+iy\mid 0\leq x\leq 1, y>0\}$.\footnote{Technically $\cl{R}_{\Gamma_0(N)}\neq\cl{R}_N$, since the definition of $\cl{R}_N$ already incorporates the choice of $\cl{F}_\infty$ by requiring that $0\leq\frac{a}{b}\leq 1$.} 

To rewrite $\cl{R}_N$ in a cleaner way, let
\begin{align*}
    D(\alpha,r)\deq\big\{z\in\C\;\big|\;|z-\alpha|\leq r\big\}
\end{align*}
be the closed disk centered at $\alpha\in\C$ with radius $r\geq 0$, and for any rational number $\frac{a}{b}\in[0,1]$ with $\gcd(a,b)=1$, let
\begin{align*}
    D_{\frac{a}{b}}\deq D\big(\tfrac{a}{b},\tfrac{1}{b}\big).
\end{align*}
Then,
\begin{align*}
    \cl{R}_N\eq\bigcup_{\substack{N\divides b,\; \gcd(a,b)=1, \\ 0\leq \frac{a}{b}\leq 1}}D_{\frac{a}{b}}.
\end{align*}
In fact, there exists a unique finite set $S_N$ consisting of rational numbers $\frac{a}{b}\in[0,1]$ with $N\divides b$ and \mbox{$\gcd(a,b)=1$} satisfying that
\begin{enumerate}[label=$(\arabic*)$]
    \item $\cl{R}_N=\bigcup_{\alpha\in S_N}D_\alpha$;
    \item $S_N$ is minimal with respect to the first property.
\end{enumerate}
Visually, $S_N$ consists of those elements $\alpha$ satisfying that an arc of positive length of the boundary of $D_\alpha$ lies on the boundary of $\cl{R}_N$.

For each $\alpha=\frac{a}{b}\in S_N$, define the \emph{north pole} of $\alpha$ as the point $P_\alpha:=\frac{a}{b}+\frac{1}{b}\cdot i$ on the boundary of $D_\alpha$. For $\alpha\in S_N$, define the \emph{complexity} of the north pole $P_\alpha$ as
\begin{align*}
    c(P_\alpha)\deq\hash\{\beta\in S_N\mid \beta\neq\alpha\text{ and } P_\alpha\in D_\beta\},
\end{align*}
i.e., the number of disks that lie strictly on the boundary of $\cl{R}_N$ and cover $P_\alpha$. Define the \emph{complexity} of the integer $N$ as
\begin{align*}
    c(N)\deq\max_{\alpha\in S_N}c(P_\alpha).
\end{align*}

The motivation of this paper for studying the complexity $c(N)$ originates from a conjecture of Don Zagier \cite{zagier} on the ``reduction theory'' of $\Gamma_0(N)$,\footnote{We would also like to refer to \cite{katok-ugarcovici-theory-2010,katok-ugarcovici-structure-2010,katok-ugarcovici-2017,abrams-katok-ugarcovici} for detailed discussions of Zagier's conjecture.} which itself was motivated by reconstructing modular forms from period polynomials. It was mentioned in \cite[p.~8]{zagier} that the methods developed by Anke Pohl and Paul Wabnitz \cite{pohl14,pohl-wabnitz,wabnitz} should settle Zagier's conjecture in the cases where a technical assumption called Condition (A) (see \cite[p.~2181]{pohl14}) is satisfied. In the setting of $\Gamma_0(N)$, this assumption is equivalent to the property that $c(N)=0$, i.e., every north pole $P_\alpha$ of $\alpha\in S_N$ is not covered by any other disk that lies strictly on the boundary of~$\cl{R}_N$. In the thesis of Nicolas Herzog, a former bachelor student of Anke Pohl, it was conjectured that
\begin{center}
    $c(N)>0$ $\Leftrightarrow$
    there exist distinct odd primes $p_1,p_2,p_3$ such that $2p_1p_2p_3\divides N$.
\end{center}
This is in general not true, but we will see how the parity and the number of distinct prime factors of $N$ play a role in determining the complexity $c(N)$. In particular, we will classify all the integers $N$ with $c(N)=0$ completely, which should then imply Zagier's conjecture for those particular integers~$N$.

Throughout this paper, we adopt the following notations. Let $\omega(N)$ denote the number of distinct prime factors of $N$, and let $p_i(N)$ denote the $i$-th smallest prime factor of~$N$ for $1\leq i\leq \omega(N)$. We will also simply write $p_i=p_i(N)$ when there is no confusion.

The first result of this paper is a complete classification of integers $N$ with $c(N)=0$.

\begin{theorem}
\label{mainthm}
Let $N$ be a positive integer. Then, $c(N)=0$ if and only if one of the following holds
\begin{enumerate}[label=\rm{(\arabic*)}]
    \item $\omega(N)\leq 3$, i.e., $N$ has at most three distinct prime factors;
    \item $\omega(N)=4$ and $p_1(N)\geq 3$, i.e., $N$ is odd and has exactly four distinct prime factors.
\end{enumerate}
\end{theorem}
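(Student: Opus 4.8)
The plan is to turn the geometric statement into congruence conditions modulo the prime divisors of $N$.

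\textbf{The basic inequalities.} For $\tfrac ab,\tfrac cd\in[0,1]$ in lowest terms one verifies directly that $P_{a/b}\in D_{c/d}$ if and only if $(ad-bc)^2+d^2\le b^2$, so a north pole is covered only by disks of strictly smaller denominator, whereas $D_{a/b}\subseteq D_{c/d}$ is the strictly stronger $|ad-bc|\le b-d$. With denominators $uN$ and $vN$ these become $(av-uc)^2\le u^2-v^2$ and $|av-uc|\le u-v$. Disks of denominator $N$ have maximal radius, so their north poles are never covered; hence $c(N)>0$ means some $\tfrac a{uN}\in S_N$ with $u\ge2$ has its north pole in some $\tfrac c{vN}\in S_N$ with $v<u$, whence $u-v<|av-uc|\le\sqrt{u^2-v^2}$. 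Inspecting the residue of $av-uc$ modulo $u$, subject to $\gcd(c,vN)=1$, eliminates $u\in\{2,3,4\}$ and $u=6$ outright, so $u=5$ or $u\ge7$.

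\textbf{Which disks lie in $S_N$.} For $\tfrac a{uN}\in S_N$ one needs, first, that $D_{a/(uN)}$ be contained in no single disk of the family; by the second inequality this is equivalent to a finite list of integers — namely $\lfloor a/u\rfloor$, $\lceil a/u\rceil$, and, for each $v\in\{2,\dots,u-1\}$, the integer $c\approx va/u$ whenever that $c$ is coprime to $vN$ — each sharing a prime factor with $N$. It requires more, though: $D_{a/(uN)}$ may be \emph{buried} under the union of two larger disks meeting exactly at $P_{a/(uN)}$, which one shows happens precisely when $P_{a/(uN)}$ lies on the boundaries of two valid disks lying on opposite sides of $\tfrac a{uN}$; so membership also forces one of those two ``lens'' disks to be invalid, i.e.\ its numerator to share a prime factor with $N$. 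The combinatorial core of the proof is this bookkeeping: the lists for $D_{a/(uN)}$, for the covering disk $\tfrac c{vN}$, for the lens disk, and how they overlap.

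\textbf{Producing complexity.} Take $u=5$, $v=3$, $\tfrac ab=\tfrac{5m+3}{5N}$, $\tfrac cd=\tfrac{3m+1}{3N}$; then $3(5m+3)-5(3m+1)=4=\sqrt{5^2-3^2}$, so $P_{a/b}\in\partial D_{c/d}$ and $|3(5m+3)-5(3m+1)|=4>5-3$. The lens disk on the exposed side is $\tfrac{4m+3}{4N}$, and unwinding the previous paragraph one finds that $D_{a/b},D_{c/d}\in S_N$ and the witness works as soon as $m$, $m+1$, $2m+1$, $3m+2$, $4m+3$ each share a prime factor with $N$ while $3m+1$, $5m+3$ are coprime to $N$. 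If $2\mid N$ choose $m$ even: the conditions on $m,3m+2$ become automatic and the pairwise coprime triple $m+1,2m+1,4m+3$ needs three odd primes of $N$ — possible once $\omega(N)\ge4$. If $N$ is odd the five values are pairwise coprime (a coincidence $3\mid m$ and $3\mid 4m+3$ is excluded, as it forces $3\mid 5m+3$), so five distinct odd prime divisors are needed — possible once $\omega(N)\ge5$, with a short check fixing the prime assignment so that $3m+1,5m+3$ avoid all of them. In both cases $m$ exists by the Chinese Remainder Theorem.

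\textbf{No complexity, and the main obstacle.} Assume $\omega(N)\le3$, or $\omega(N)=4$ with $N$ odd. For $u=5$: the classes $a\equiv1,4\pmod5$ admit no covering disk in the family, while $a\equiv2,3\pmod5$ force — via the two preceding paragraphs, lens disk included — exactly the five divisibility conditions above, hence $\omega(N)\ge5$, or $\ge4$ when $2\mid N$, contradicting our assumption. For $u\ge7$ one shows the containment and lens conditions force at least as many distinct prime divisors of $N$ as in the case $u=5$, so such $u$ contribute nothing new; thus no complexity-producing configuration exists, completing the classification. The parts I expect to be hardest are establishing the ``lens
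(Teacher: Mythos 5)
Your reduction of the problem to arithmetic is correct as far as it goes: the criteria $P_{a/b}\in D_{c/d}\Leftrightarrow (ad-bc)^2+d^2\le b^2$ and $D_{a/b}\subseteq D_{c/d}\Leftrightarrow |ad-bc|\le b-d$ are right, the elimination of $u\in\{2,3,4,6\}$ via residues checks out, and your witness for $c(N)\ge 1$ (north pole of $D_{(5m+3)/(5N)}$ on the boundaries of $D_{(3m+1)/(3N)}$ and $D_{(4m+3)/(4N)}$, with $m,m+1,2m+1,3m+2,4m+3$ non-coprime to $N$ and $3m+1,5m+3$ coprime) is exactly the mirror image, under $n\mapsto N-n-1$, of the configuration the paper uses, produced by the same CRT argument. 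But there are two genuine gaps in the converse direction. First, at $u=5$ your claim that $a\equiv 2,3\pmod 5$ forces ``exactly the five divisibility conditions above'' misses a sub-case: with $a=5m+3$ the two candidate covering disks are $D_{(3m+1)/(3N)}$ and $D_{(4m+3)/(4N)}$, and exactly one of them must be a valid disk of the family (both valid buries $D_{a/(5N)}$ under the lens; neither valid gives no covering). You only treat the case where $3m+1$ is the coprime one; the other case forces the different list $m,m+1,2m+1,3m+1,3m+2$. The prime count happens to come out the same ($\ge 5$ primes for odd $N$, $\ge 4$ for even $N$, since only $p=2$ can divide two entries of that list), but this has to be checked, not subsumed under ``exactly.''

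The more serious gap is the sentence ``for $u\ge 7$ one shows the containment and lens conditions force at least as many distinct prime divisors of $N$ as in the case $u=5$.'' This is an infinite family of cases and is the crux of your whole strategy. For each $u\ge 7$ and each residue $r$ of $a$ mod $u$, survival of $D_{a/(uN)}$ in $S_N$ forces roughly one linear form $vm+t_v$ per $v\in\{1,\dots,u-1\}$ to be non-coprime to $N$, but the pairwise resultants $|v_1t_2-v_2t_1|$ of these forms grow with $u$, so primes up to about $u^2$ can divide two of them simultaneously and the naive count of required primes does not obviously stay above $5$ uniformly in $u$. You give no argument, and your text is in fact cut off before the ``lens'' discussion is even completed. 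The paper avoids this problem entirely and in the opposite direction: under the hypotheses it computes each bad region $\cl{R}_{N,n}$ exactly, showing it equals a union of at most two explicit disks of denominator at most $5N$ (Lemmas 4.1, 4.3, 5.1, 5.2, 5.8, 5.9), with all disks of larger denominator swallowed by elementary ``swept region'' estimates such as $\frac{1}{kN}<\frac{\sqrt 3}{2}\cdot\frac{1}{jN}$; once the region is two disks, $c(N,n)=0$ is immediate and no classification of covering configurations at large $u$ is ever needed. To complete your proof you must either supply a uniform-in-$u$ counting argument for $u\ge 7$ or import some version of the paper's geometric containment lemmas.
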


The second result of this paper gives us some idea of how $c(N)$ grows with respect to $N$. It roughly says that $c(N)$ is large if both the number of distinct prime factors of $N$ and the smallest prime factor of $N$ are large.

\begin{theorem}
\label{compinfty}
Let $N$ be a positive integer and let $q$ be a fixed prime number. If
\begin{align*}
    \omega(N)\;\geq\;\frac{q^6}{2} + 3q^4 - \frac{3q^3}{2} + \frac{9q^2}{2} - \frac{11q}{2} + 3
\end{align*}
and
\begin{align*}
    p_1(N)\;\geq\;\frac{q^6}{2} + 3q^4 - 2q^3 + \frac{9q^2}{2} - 7q + 3,
\end{align*}
then $c(N)\geq q-2$. In particular, the complexity function $c(N)$ is not bounded.
\end{theorem}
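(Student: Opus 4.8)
The target is to produce, for each $N$ meeting the hypotheses, a single $\alpha\in S_N$ whose north pole $P_\alpha$ lies in at least $q-2$ of the disks $D_\beta$ with $\beta\in S_N\setminus\{\alpha\}$; then $c(N)\geq c(P_\alpha)\geq q-2$, and the last assertion of the theorem follows by taking $N$ with sufficiently many, sufficiently large prime factors. The backbone is the elementary observation that for coprime $\frac ab,\frac cd\in[0,1]$ with $b>d$ one has $D_{a/b}\subseteq D_{c/d}$ iff $|ad-bc|\leq b-d$, while $P_{a/b}\in D_{c/d}$ iff $(ad-bc)^2+d^2\leq b^2$; writing $b=Nm$ and $d=Nm'$ these become $|am'-mc|\leq m-m'$ and $(am'-mc)^2+m'^2\leq m^2$. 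So I will look for $\alpha=\frac{a}{Nm}$ with a suitably large $m=m(q)$, together with covering fractions $\beta_j=\frac{c_j}{Nm_j}$, $j=1,\dots,q-2$, with $2\leq m_1<\dots<m_{q-2}<m$, arranged so that, putting $v_j:=am_j-mc_j$, one has $m-m_j+1\leq|v_j|\leq\sqrt{m^2-m_j^2}$: the lower bound prevents $D_\alpha\subseteq D_{\beta_j}$ and the upper bound forces $P_\alpha\in D_{\beta_j}$, and this window is nonempty exactly because $m_j\leq m-1$.

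Since $v_j\equiv am_j\pmod m$ while the window for $|v_j|$ has length less than $m$, the value $v_j$ is essentially determined once the residue $a\bmod m$ is fixed; a counting/averaging argument over $a\bmod m$ shows that, for $m$ a suitable polynomial in $q$, there is a residue class making at least $q-2$ of the multipliers admissible and --- by a judicious choice of the $m_j$ and, if needed, of $a\bmod m$ --- the resulting disks $D_{\beta_j}$ pairwise non-nested, so that each is a legitimate candidate for $S_N$; along the way one checks $\gcd(c_j,m_j)=1$. The key structural point is that $a\bmod m$, which governs the shape of the configuration, decouples from $a\bmod N$: moving $a$ within its class modulo $m$ merely translates the picture, so $a\bmod N$ remains a free parameter to be pinned down by the Chinese Remainder Theorem using the prime factors of $N$.

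The heart of the proof is to upgrade these inequalities to the genuine statements $\alpha\in S_N$ and $\beta_j\in S_N$, i.e.\ that $D_\alpha$ and each $D_{\beta_j}$ contribute an arc of positive length to $\partial\cl{R}_N$. Taking all $v_j>0$ places every $\beta_j$ to the left of $\alpha$ and, among themselves, into a ``staircase'' with the larger disks further left; one finds that the north pole of each $\beta_j$ is then covered by the $\beta_i$ with $m_i<m_j$ (so the easy ``north pole on top'' criterion is unavailable), but that each of $D_\alpha$ and $D_{\beta_j}$ retains a sub-arc on its right, near the real axis, missed by the whole family. To guarantee such a sub-arc really lies on $\partial\cl{R}_N$ one must rule out coverage by every other disk $D_{c/N\ell}$ of $\cl{R}_N$; the containment criterion bounds, at each level $\ell$, the number of threatening disks, and one eliminates each by forcing its numerator --- an affine function of the free residue $a\bmod N$ once $a\bmod m$ is fixed --- to be divisible by a prime of $N$. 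This is where both hypotheses are used: $\omega(N)$ large furnishes enough prime factors to carry out all the deletions, while $p_1(N)$ large guarantees that the resulting system of congruences for $a$ (the non-vanishing conditions preserving the needed coprimalities, together with the vanishing conditions deleting the threatening disks) remains solvable at each prime. The stated polynomial bounds on $\omega(N)$ and $p_1(N)$ emerge from tallying the number of primes used and the number of residues excluded at each of them.

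I expect the main obstacle to be precisely this genuineness step. The criterion ``$D\subseteq D'$'' controls only coverage by a single disk, whereas membership in $S_N$ is a statement about coverage by arbitrary unions; the way around is to isolate one explicit boundary point of each of $D_\alpha,D_{\beta_1},\dots,D_{\beta_{q-2}}$ --- namely the real-axis endpoint on its right --- and to show that, after the CRT surgery, no disk of $\cl{R}_N$ contains that point in its interior. Identifying exactly which disks can threaten each endpoint, checking that the forced divisibility and non-divisibility conditions never collide, and keeping the total number of conditions within the claimed budget is where the bulk of the work --- and the degree-six bounds --- will reside.
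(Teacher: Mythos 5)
Your overall architecture is the same as the paper's: build an explicit family of disks at levels $m_1<\cdots<m_{q-2}<m$ over a single bad interval, arrange that the north pole of the smallest one is covered by all the larger ones while no disk is contained in another, and then realize exactly this configuration inside $\cl{R}_{N,n}$ by a Chinese-remainder choice of $n$ using the prime factors of $N$, with the largeness of $p_1(N)$ guaranteeing (via the determinant bound $p\mid a_2b_1-a_1b_2$, Lemma~\ref{pdivs1s2}) that the ``delete this disk'' congruences never collide with the coprimality conditions you must preserve. The paper simply writes the family down explicitly, namely $D^{(j)}=D\bigl(\tfrac{(Aq+j)n+q}{(Aq+j)N},\tfrac{1}{(Aq+j)N}\bigr)$ for $1\le j\le q-1$ with $A=\lceil q^2/2\rceil$, for which your quantity $v_j$ equals $q(j-q+1)$; so your counting/averaging step over $a\bmod m$ is not needed (though it is plausibly completable) and the decoupling of $a\bmod m$ from $a\bmod N$ is automatic because the whole family has the single numerator offset $q$.

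The genuine gap is in the genuineness step, exactly where you predicted the difficulty, and the specific witness point you commit to cannot work. The right real-axis endpoint of $D_{\beta_j}$ is a point of $\cl{R}_N$ at height $0$; by the remark following \Cref{notcoprime} it is an interior point of $\cl{R}_N$ unless it happens to be of the form $\tfrac{n'}{N}$ with $\gcd(N,n')>1$, and structurally a point at height $0$ (or at height tending to $0$ relative to $1/N$) is threatened by disks $D_{c/(N\ell)}$ at arbitrarily large levels $\ell$, so a deletion scheme with finitely many primes has infinitely many conditions to impose and cannot close. What is needed --- and what the paper does --- is to choose for each $D^{(j)}$ a witness point at a \emph{definite} height, namely $Q_j$ with $y_j=\tfrac{\sqrt{q^2-1}}{q(Aq+j)N}$, which lies on $\partial D^{(j)}$ and strictly outside every other member of the family. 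Then only disks of level $\ell\le A'\deq\bigl\lceil q(Aq+q-1)/\sqrt{q^2-1}\bigr\rceil$ can reach height $y_j$ at all, so it suffices to delete \emph{every} non-family disk of level at most $A'$; this finite list (of size roughly $(A')^2/2$) and the associated determinants are precisely what produce the degree-six bounds in the statement. Until you fix a positive lower bound on the height of your witness points, the ``upgrade to $\alpha,\beta_j\in S_N$'' step fails.
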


Let us briefly discuss the case when $N=1$, or equivalently, $\omega(N)=0$. In this case, it is easy to see that $\cl{R}_1=D_0\cup D_1$, so $c(1)=0$. As we will see later, the case when $N=1$ is in some sense different from the case when $N>1$, since it is the only case where the disks $D_0$ and $D_1$ show up in the region $\cl{R}_N$. Hence, one can already observe the ``influence'' of $\omega(N)$ even in the simplest case. Due to the different behavior, we will carry the assumption that $N>1$ from now on.

To finish this section, we give a brief outline of how the analysis of $c(N)$ and $\cl{R}_N$ goes. For $N=p$ a prime, it is easy to see that $\cl{R}_p=\cup_{1\leq a\leq p-1}D_{\frac{a}{p}}$ and hence $c(p)=0$. Our overall strategy for understanding $\cl{R}_N$ can then be described as dividing it into good regions and bad regions, where the good regions behave like $\cl{R}_p$ and are easy to describe while the behaviors of the bad regions depend on the prime factors of $N$ and thus need to be treated more carefully.

\subsection*{Acknowledgements}
The author would like to thank Anke Pohl and Don Zagier for introducing this topic to the author as well as for various useful discussions on this topic. The author would also like to thank the anonymous reviewer for their detailed suggestions.

\section{Preparatory lemmas}
\label{regionsect}

We will prove some preparatory lemmas for general $N>1$ in this section. These lemmas essentially help us divide $\cl{R}_N$ into good and bad regions as discussed previously, and give a simple description of the good regions.

\begin{lemma}
\label{notcoprime}
For each integer $n$ with $0\leq n\leq N$ and $\gcd(N,n)>1$, the point $z=\frac{n}{N}$ lies on the boundary of $\cl{R}_N$.
\end{lemma}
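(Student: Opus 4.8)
The plan is to show that $z = \frac{n}{N}$ lies on the boundary of $\cl{R}_N$ by exhibiting a disk $D_\alpha$ (with $\alpha = \frac{a}{b}$, $N \mid b$, $\gcd(a,b)=1$, $0 \le \frac{a}{b} \le 1$) that passes through $z$, and then arguing that no disk of the union can contain $z$ in its interior. Write $d = \gcd(N,n) > 1$ and $N = d m$, $n = d k$ with $\gcd(m,k)=1$ (note $1 \le m < N$ since $d>1$, and $0 \le k \le m$). First I would handle the easy containment: the rational $\frac{k}{m}$ has denominator $m$ which does \emph{not} satisfy $N \mid m$, so $D_{k/m}$ is not one of the disks in the union defining $\cl{R}_N$; instead I look for a genuine generator. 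The key observation is that $z = \frac{n}{N} = \frac{k}{m}$, and I want a disk $D_{a/b}$ with $N \mid b$ actually reaching the point $\frac{k}{m}$ on the real axis --- but a disk $D(\frac{a}{b},\frac{1}{b})$ meets $\R$ only in the interval $[\frac{a-1}{b}, \frac{a+1}{b}]$, touching the real line at the single point $\frac{a}{b}$ if we look at the boundary. So the real point $\frac{n}{N}$ lies on $\partial D_{a/b}$ iff $|\frac{n}{N} - \frac{a}{b}| = \frac{1}{b}$, i.e. $\frac{a}{b}$ is an endpoint $\frac{n}{N} \pm \frac{1}{b}$.

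The heart of the argument is therefore: (i) produce $\frac{a}{b} \in [0,1]$ with $N \mid b$, $\gcd(a,b)=1$, and $|\frac{a}{b} - \frac{n}{N}| = \frac{1}{b}$, so that $z \in \partial D_{a/b} \subseteq \cl{R}_N$; and (ii) show $z$ is not an interior point of any $D_{a'/b'}$ in the union, so it is a boundary point of $\cl{R}_N$. For (ii), suppose $z = \frac{n}{N}$ lies in the interior of some $D_{a'/b'} = D(\frac{a'}{b'}, \frac{1}{b'})$ with $N \mid b'$; then $|\frac{n}{N} - \frac{a'}{b'}| < \frac{1}{b'}$, which gives $|n b' - a' N| < N$ after clearing denominators (using $N \mid b'$, write $b' = N b''$, so $|n b'' - a'| < 1$, forcing $a' = n b''$, contradicting $\gcd(a', b') = 1$ unless $b'' = 1$, and even then $\frac{a'}{b'} = \frac{n}{N}$ is not in lowest terms since $\gcd(N,n) > 1$). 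Hence no such disk contains $z$ in its interior, so $z \notin \operatorname{int}(\cl{R}_N)$.

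For part (i), I need to choose the sign and then guarantee the lowest-terms and range conditions. Take $b = N$ if possible: the candidates are $\frac{n-1}{N}$ and $\frac{n+1}{N}$. One of these lies in $[0,1]$ (if $n = 0$ take $\frac{1}{N}$, if $n = N$ take $\frac{N-1}{N}$, otherwise either works), and since $\gcd(N,n) = d > 1$ we have $\gcd(N, n \pm 1) = \gcd(d, \pm 1)$-type reasoning... actually $\gcd(N, n-1)$ need not be $1$ in general, so this is the step requiring care. The fix: if $\gcd(N, n \pm 1) = e > 1$, pass instead to the disk with denominator $b = N e'$ for a suitable multiplier, or more cleanly, invoke the standard Farey/Ford-circle fact that $\frac{n}{N}$ (in possibly non-reduced form, denominator $N$) is the tangency point of consecutive Ford circles --- but here we specifically need the circle's \emph{parameter} to have denominator divisible by $N$. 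I expect this denominator bookkeeping --- ensuring simultaneously $N \mid b$, $\gcd(a,b) = 1$, $\frac{a}{b} \in [0,1]$, and the exact distance $\frac{1}{b}$ to $z$ --- to be the main obstacle; the cleanest route is probably to write $z = \frac{n}{N}$ and observe that among $\frac{nM \pm 1}{NM}$ for $M$ ranging over positive integers coprime to $n$, one can always be put in lowest terms with denominator a multiple of $N$ and lying in $[0,1]$, since $\gcd(nM \pm 1, NM)$ divides $\gcd(nM\pm1, M)\cdot(\text{something})$ and can be controlled by choosing $M$ appropriately (e.g. $M$ a large prime not dividing $n$). Once such a disk is produced, combining with (ii) shows $z \in \partial \cl{R}_N$, completing the proof.
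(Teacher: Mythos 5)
Your overall structure is the same as the paper's: exhibit one disk of the union whose boundary circle passes through $z=\frac{n}{N}$, then show no disk of the union contains $z$ in its interior. Your part (ii) is correct and is essentially identical to the paper's argument (write $b'=b_0N$, deduce $|b_0n-a'|<1$, hence $a'=b_0n$, contradicting $\gcd(a',b')=1$ because $\gcd(N,n)>1$).

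The gap is in part (i), which is the heart of the lemma, and you correctly flag it as the main obstacle but do not close it. Your proposed fix --- take $\frac{nM\pm1}{NM}$ with $M$ ``a large prime not dividing $n$'' --- fails in general. For example, with $N=6$, $n=3$: for every odd $M$ the numbers $nM\pm1=3M\pm1$ are even while $NM=6M$ is even, so $\gcd(nM\pm1,NM)\geq2$ for \emph{every} odd prime $M$, and after reduction the denominator need not remain a multiple of $N$ (e.g.\ $\frac{14}{30}=\frac{7}{15}$, and $6\nmid15$; moreover the reduced disk no longer touches $z$ at distance equal to its radius). The correct choice, which is what the paper does, is $M=\frac{N}{d}$ where $d=\gcd(N,n)$, i.e.\ $b=\frac{N^2}{d}$ and $a=\frac{nN}{d}-1$. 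The point you are missing is that $d\mid n$, so $\frac{nN}{d}=\frac{n}{d}\cdot N$ is a \emph{multiple of $N$}; hence $a\equiv-1\pmod N$ is coprime to $N$, and since every prime factor of $b=\frac{N^2}{d}$ divides $N$, one gets $\gcd(a,b)=1$ for free, together with $\frac{n}{N}-\frac{a}{b}=\frac{1}{b}$ exactly. So the lemma is true and your strategy is the right one, but the arithmetic construction you sketch does not produce the required disk; the divisibility $d\mid n$ is the missing ingredient.
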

\begin{proof}
As the statement is vacuously true if $N=1$, we assume that $N>1$. First, suppose that $n=0$. By definitions, $0\in D_{\frac{1}{N}}\subseteq\cl{R}_N$, so $0\in\cl{R}_N$. Now, suppose that $n\geq 1$. Let $d=\gcd(N,n)>1$ and consider $\frac{a}{b}$ with
\begin{align*}
    a\eq\frac{nN}{d}-1\quad\text{ and }\quad b\eq\frac{N^2}{d}.
\end{align*}
Indeed, $N\divides b$. Note that $\gcd(\frac{nN}{d}-1,N)=1=\gcd(\frac{nN}{d}-1,\frac{N}{d})$, so $\gcd(a,b)=1$. Also, $1\leq n\leq N$ so $0\leq\frac{a}{b}\leq 1$. Now, 
\begin{align*}
    \frac{n}{N}-\frac{a}{b}\eq\frac{n\cdot\frac{N}{d}}{N\cdot\frac{N}{d}}-\frac{n\cdot\frac{N}{d}-1}{N\cdot\frac{N}{d}}\eq\frac{1}{b},
\end{align*}
so $\frac{n}{N}\in\cl{R}_N$.

To prove that $\frac{n}{N}$ lies on the boundary, suppose that there exists some $\frac{a'}{b'}$ with $0\leq\frac{a'}{b'}\leq 1$, $\gcd(a',b')=1$, and $N\divides b'$ such that $\big|\frac{n}{N}-\frac{a'}{b'}\big|<\frac{1}{b'}$. Write $b'=b_0N$ for some positive integer~$b_0$. Then, $\big|\frac{b_0n-a'}{b_0N}\big|<\frac{1}{b_0N}$. This implies that $b_0n=a'$, which contradicts that $\gcd(N,a')=1$ as $\gcd(N,n)>1$.
\end{proof}
\begin{remark}
In fact, points of the form $\frac{n}{N}$ with $0\leq n\leq N$ and $\gcd(N,n)>1$ are the only real points on the boundary of $\cl{R}_N$, but we will not need this for our proof.
\end{remark}

Let $\psi(N)=N-\phi(N)$ denote the number of integers $n$ with $1\leq n\leq N$ and $\gcd(N,n)>1$, where $\phi(N)$ denotes Euler's totient function. For $N>1$, we enumerate all the integers $n$ with $0\leq n\leq N$ and $\gcd(N,n)>1$ as $0=n_0<n_1<\cdots<n_{\psi(N)-1}<n_{\psi(N)}=N$. Define
\begin{align}
\label{region}
    \cl{R}_{N,n_l}\deq\bigg\{z\in\cl{R}_N\;\bigg|\; \frac{n_l}{N}\leq\Re(z)\leq\frac{n_{l+1}}{N}\bigg\}
\end{align}
for each $0\leq l\leq\psi(N)-1$. By \Cref{notcoprime},
\begin{align*}
    \cl{R}_{N,n_{l-1}}\cap\cl{R}_{N,n_l}\eq\bigg\{\frac{n_l}{N}\bigg\}
\end{align*}
for each $0\leq l\leq\psi(N)-1$. Hence, the region $\cl{R}_N$ can be divided into the regions $\cl{R}_{N,n_l}$ by ``disconnecting'' $\cl{R}_N$ at the points $\{\frac{n_l}{N}\}$ (see the visualization of $\cl{R}_9$ in \Cref{visual-R9}). In particular, each $\cl{R}_{N,n_l}$ can also be written as
\begin{align*}
   \cl{R}_{N,n_l}\eq\bigcup_{\substack{N\divides b,\; \gcd(a,b)=1, \\ \frac{n_l}{N}\leq \frac{a}{b}\leq \frac{n_{l+1}}{N}}}D_{\frac{a}{b}},
\end{align*}
and there exists a unique minimal finite set $S_{N,n_l}\subseteq[\frac{n_l}{N},\frac{n_{l+1}}{N}]$ of rational numbers (compared with $S_N$) such that 
\begin{align*}
    \cl{R}_{N,n_l}\eq\bigcup_{\alpha\in S_{N,n_l}}D_\alpha.
\end{align*}
Note that for each $n_l$, the integer $n_{l+1}$, being the next integer after $n_l$ with \mbox{$\gcd(N,n_{l+1})>1$}, is completely determined by $n_l$. We will thus drop the index $l$ whenever convenient.

Below is a visualization of $\cl{R}_N$ when $N=9$. The region $\cl{R}_9$ can be naturally divided into three regions $\cl{R}_{9,0}$, $\cl{R}_{9,3}$, and $\cl{R}_{9,6}$ by ``disconnecting'' it at red points, which are exactly of the form $\frac{n}{N}$ with $\gcd(N,n)>1$.

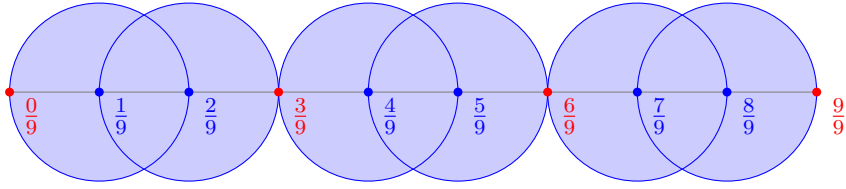
\begin{figure}[H]
\centering
\begin{tikzpicture}[scale=1.18]
\fill[fill=blue!20!white] (1,0) circle [radius=1cm];
\fill[fill=blue!20!white] (2,0) circle [radius=1cm];
\fill[fill=blue!20!white] (4,0) circle [radius=1cm];
\fill[fill=blue!20!white] (5,0) circle [radius=1cm];
\fill[fill=blue!20!white] (7,0) circle [radius=1cm];
\fill[fill=blue!20!white] (8,0) circle [radius=1cm];
\draw[gray, thin](0,0)--(9,0);
\draw[blue] (1,0) circle [radius=1cm];
\draw[blue] (2,0) circle [radius=1cm];
\draw[blue] (4,0) circle [radius=1cm];
\draw[blue] (5,0) circle [radius=1cm];
\draw[blue] (7,0) circle [radius=1cm];
\draw[blue] (8,0) circle [radius=1cm];
\draw (0,0) node[below=9pt, right=1pt, red]{$\frac{0}{9}$};
\draw (1,0) node[below=9pt, right=1pt, blue]{$\frac{1}{9}$};
\draw (2,0) node[below=9pt, right=1pt, blue]{$\frac{2}{9}$};
\draw (3,0) node[below=9pt, right=1pt, red]{$\frac{3}{9}$};
\draw (4,0) node[below=9pt, right=1pt, blue]{$\frac{4}{9}$};
\draw (5,0) node[below=9pt, right=1pt, blue]{$\frac{5}{9}$};
\draw (6,0) node[below=9pt, right=1pt, red]{$\frac{6}{9}$};
\draw (7,0) node[below=9pt, right=1pt, blue]{$\frac{7}{9}$};
\draw (8,0) node[below=9pt, right=1pt, blue]{$\frac{8}{9}$};
\draw (9,0) node[below=9pt, right=1pt, red]{$\frac{9}{9}$};
\fill[red] (0,0) circle [radius=0.05cm];
\fill[blue] (1,0) circle [radius=0.05cm];
\fill[blue] (2,0) circle [radius=0.05cm];
\fill[red] (3,0) circle [radius=0.05cm];
\fill[blue] (4,0) circle [radius=0.05cm];
\fill[blue] (5,0) circle [radius=0.05cm];
\fill[red] (6,0) circle [radius=0.05cm];
\fill[blue] (7,0) circle [radius=0.05cm];
\fill[blue] (8,0) circle [radius=0.05cm];
\fill[red] (9,0) circle [radius=0.05cm];
\end{tikzpicture}
\caption{\label{visual-R9}Visualization of $\cl{R}_9$}
\end{figure}

We would also like to define a notion of complexity for these regions~$\cl{R}_{N,n_l}$, or specifically, for the pair $(N,n)$. From now on, whenever we say a \emph{pair} $(N,n)$, we always mean a pair of integers $(N,n)$ with $0\leq n\leq N-1$ and $\gcd(N,n)>1$ unless stated otherwise. Given a pair $(N,n)$, define the \emph{complexity} of the pair $(N,n)$ as
\begin{align*}
    c(N,n)\deq\max_{\alpha\in S_{N,n}}c(P_\alpha).
\end{align*}
We have the straightforward lemma.

\begin{lemma}
\label{paircomp}
Let $N>1$ be an integer. Then,
\begin{align*}
    c(N)\eq\max_{\substack{0\leq n\leq N-1 \\ \gcd(N,n)>1}}c(N,n).
\end{align*}
In particular, $c(N)=0$ if and only if $c(N,n)=0$ for all pairs $(N,n)$.
\end{lemma}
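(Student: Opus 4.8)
The plan is to reduce the lemma to the purely set-theoretic identity
\[
S_N\;=\;\bigsqcup_{l=0}^{\psi(N)-1}S_{N,n_l}\qquad\text{(a disjoint union).}
\]
Granting this, the lemma follows at once: each $\alpha\in S_N$ lies in a unique $S_{N,n_l}$, so
\[
c(N)=\max_{\alpha\in S_N}c(P_\alpha)=\max_{0\le l\le\psi(N)-1}\ \max_{\alpha\in S_{N,n_l}}c(P_\alpha)=\max_{0\le l\le\psi(N)-1}c(N,n_l),
\]
and $n_0,\dots,n_{\psi(N)-1}$ run exactly over the integers $n$ with $0\le n\le N-1$ and $\gcd(N,n)>1$; the final clause is then immediate since a maximum of nonnegative integers vanishes iff every term does.

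To prove the identity I would first show that every admissible disk is confined to a single vertical strip: if $\alpha=\frac ab$ satisfies $\gcd(a,b)=1$, $N\mid b$, $\frac ab\in[0,1]$, and $D_\alpha\subseteq\cl{R}_N$, then there is a unique $l$ with $\frac{n_l}{N}<\alpha<\frac{n_{l+1}}{N}$, and in fact $D_\alpha\subseteq\{z\mid \frac{n_l}{N}\le\Re(z)\le\frac{n_{l+1}}{N}\}$, hence $D_\alpha\subseteq\cl{R}_{N,n_l}$ by \eqref{region}. Indeed $b\ge N$ because $N\mid b$, whereas each $\frac{n_j}{N}$ has reduced denominator $N/\gcd(N,n_j)<N$; so $\alpha\ne\frac{n_j}{N}$ for every $j$, and $\alpha$ lies strictly inside a unique interval $(\frac{n_l}{N},\frac{n_{l+1}}{N})$. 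Since $D_\alpha\subseteq\cl{R}_N$ gives $\operatorname{int}(D_\alpha)\subseteq\operatorname{int}(\cl{R}_N)$, \Cref{notcoprime} keeps the boundary points $\frac{n_l}{N},\frac{n_{l+1}}{N}$ of $\cl{R}_N$ out of $\operatorname{int}(D_\alpha)$; as these two points flank $\alpha$ on the real axis, this forces $\alpha-\tfrac1b\ge\tfrac{n_l}{N}$ and $\alpha+\tfrac1b\le\tfrac{n_{l+1}}{N}$, which is the claimed containment. This applies verbatim to every element of $S_N$ and of every $S_{N,n_l}$.

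Next I would establish the two inclusions. For $S_N\subseteq\bigcup_l S_{N,n_l}$: the right-hand family is a finite set of admissible fractions whose disks cover $\bigcup_l\cl{R}_{N,n_l}=\cl{R}_N$, so it contains a subfamily minimal among coverings of $\cl{R}_N$, which by uniqueness is $S_N$; and the union is disjoint by the strip statement. For the reverse, fix $l$ and set $T:=S_N\cap(\frac{n_l}{N},\frac{n_{l+1}}{N})$, so $T\subseteq S_{N,n_l}$ by the strip statement. Using $\cl{R}_{N,n_l}=\cl{R}_N\cap\{\frac{n_l}{N}\le\Re\le\frac{n_{l+1}}{N}\}$ and decomposing $\cl{R}_N=\bigcup_{\beta\in S_N}D_\beta$ according to which strip each $D_\beta$ occupies, one checks that the disks $D_\beta$ ($\beta\in S_N$) lying in a strip other than the $l$-th meet the slab $\{\frac{n_l}{N}\le\Re\le\frac{n_{l+1}}{N}\}$ only in the single real points $\frac{n_l}{N},\frac{n_{l+1}}{N}$, whence $\cl{R}_{N,n_l}\setminus\{\frac{n_l}{N},\frac{n_{l+1}}{N}\}\subseteq\bigcup_{\beta\in T}D_\beta$. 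If some $\gamma\in S_{N,n_l}\setminus T$ existed, minimality of $S_{N,n_l}$ would give a point $z\in D_\gamma$ in no other disk of $S_{N,n_l}$; the set of such $z$ is relatively open and nonempty in the positive-radius disk $D_\gamma$, hence contains a point other than the two corners, so $z\in\bigcup_{\beta\in T}D_\beta$, forcing $z\in D_\beta$ for some $\beta\in T\subseteq S_{N,n_l}$ with $\beta\ne\gamma$ — a contradiction. Thus $S_{N,n_l}=T\subseteq S_N$, and the identity follows.

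The main obstacle — really the only subtle point — is the behaviour at the real corner points $\frac{n_l}{N}$: a disk $D_\beta$ attached to a neighbouring region $\cl{R}_{N,n_{l\pm1}}$ may be tangent to the vertical line through $\frac{n_l}{N}$ and so touch $\cl{R}_{N,n_l}$ exactly at that corner, meaning the $S_N$-disks strictly inside the $l$-th strip cover $\cl{R}_{N,n_l}$ only off those two points. This is harmless: the overlap is a single point, $\frac{n_l}{N}$ lies on $\partial\cl{R}_N$ by \Cref{notcoprime}, so the minimality witness for $S_{N,n_l}$ can always be chosen in the relative interior of a disk and hence away from the corners, and the argument goes through. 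The north poles $P_\alpha$, having positive imaginary part, are never corner points, which is ultimately why the complexity is insensitive to the decomposition.
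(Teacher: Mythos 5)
Your proof is correct and is essentially the argument the paper has in mind: the paper states this lemma with no proof at all (``We have the straightforward lemma''), the implicit justification being exactly your identity $S_N=\bigsqcup_l S_{N,n_l}$, which rests on the fact (already set up in the surrounding text via \Cref{notcoprime}) that the regions $\cl{R}_{N,n_l}$ meet only at the real points $\frac{n_l}{N}$ and that every admissible disk is confined to a single strip. The only step you assert a little quickly is $T\subseteq S_{N,n_l}$, but it follows from the paper's own characterization of the minimal sets (a positive-length arc of $\partial D_\beta$ on $\partial\cl{R}_N$ remains on $\partial\cl{R}_{N,n_l}$ since $\operatorname{int}\cl{R}_{N,n_l}\subseteq\operatorname{int}\cl{R}_N$), so this is a detail at the same level of informality as the paper's setup, not a gap.
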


To formulate our next lemma, we make a pair of definitions. The pair $(N,n)$ is called \emph{good} if $\gcd(N,n+1)=1$, and \emph{bad} if $\gcd(N,n+1)>1$. We also call the corresponding region $\cl{R}_{N,n}$ \emph{good} or \emph{bad} if the pair $(N,n)$ is good or bad respectively.

\begin{lemma}
\label{goodregion}
Any good region $\cl{R}_{N,n_l}$ is of the form
\begin{align*}
    \cl{R}_{N,n_l}\eq\bigcup_{n_l<a<n_{l+1}}D_{\frac{a}{N}}.
\end{align*}
In particular, $c(N,n)=0$ for all good pairs $(N,n)$.
\end{lemma}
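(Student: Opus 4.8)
The plan is to show directly that if $(N,n)$ is good, i.e.\ $n_{l+1}=n_l+1$, then every disk $D_{\frac{a}{b}}$ entering the strip $\frac{n_l}{N}\le\Re(z)\le\frac{n_l+1}{N}$ is contained in the union of the disks $D_{\frac{a}{N}}$ for $n_l<a<n_l+1$ --- wait, that index range is empty, so in fact I should phrase it as: the good region is squeezed between the two boundary points $\frac{n_l}{N}$ and $\frac{n_l+1}{N}$, and I must identify which $D_\alpha$ with $N\mid b$ actually show up. Let me restart the strategy. Since $n_{l+1}=n_l+1$, the interval is $[\frac{n_l}{N},\frac{n_l+1}{N}]$, which has length $\frac1N$. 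A disk $D_{\frac{a}{b}}=D(\frac ab,\frac1b)$ with $N\mid b$, say $b=mN$, has diameter $\frac2{mN}$. The key geometric input I would establish first is: for a fixed denominator $b=mN$, the disks $D_{\frac{a}{mN}}$ as $a$ ranges over integers coprime to $mN$ with $\frac{a}{mN}$ in the strip are pairwise tangent or overlapping along the real line, and crucially each such disk is contained in the disk $D_{\frac{a'}{N}}$ for the unique integer $a'$ with $|\frac{a}{mN}-\frac{a'}{N}|$ minimal --- but here's the subtlety I flagged: the only candidates for $a'$ in the closed strip are $a'=n_l$ and $a'=n_l+1$, and by hypothesis $\gcd(N,n_l)>1$ while $\gcd(N,n_l+1)=1$, so only $D_{\frac{n_l+1}{N}}$ is a legitimate disk of the family (denominator exactly $N$). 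Hmm, so the clean statement ``$\cl{R}_{N,n_l}=\bigcup_{n_l<a<n_{l+1}}D_{\frac aN}$'' with empty index set would say the region is empty, which is wrong. I think the intended reading must be that $n_{l+1}$ here is \emph{not} forced to be $n_l+1$; rather, ``good'' just means $\gcd(N,n_l+1)=1$, and the next bad integer $n_{l+1}$ can be far away, so the range $n_l<a<n_{l+1}$ is nonempty and consists of integers $a$ all coprime to $N$ except possibly... no, all integers strictly between two consecutive non-coprime-to-$N$ integers are coprime to $N$. Good --- so the statement is: between consecutive ``bad'' integers $n_l$ and $n_{l+1}$, every integer $a$ in between has $\gcd(a,N)=1$, and the region is exactly covered by the unit-index disks $D_{\frac aN}$.

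With that correct reading, here is the proof outline. \textbf{Step 1 (the unit disks cover the strip).} Show $\bigcup_{n_l<a<n_{l+1}}D_{\frac aN}\supseteq\cl R_{N,n_l}$ by proving the reverse containment is automatic and then the forward one. Concretely, I would show that any disk $D_{\frac a{mN}}$ with $m\ge 2$, $\gcd(a,mN)=1$, and $\frac a{mN}\in[\frac{n_l}{N},\frac{n_{l+1}}{N}]$ is contained in $D_{\frac{a^\ast}{N}}$ where $a^\ast$ is the nearest integer to $\frac a m$; the point is that $\frac{a^\ast}{N}$ lies in the \emph{open} interval $(\frac{n_l}{N},\frac{n_{l+1}}{N})$ (it cannot equal an endpoint: if $a^\ast=n_l$ or $n_{l+1}$ then $\gcd(a^\ast,N)>1$, forcing $\gcd(a,mN)>1$ since $a\equiv a^\ast\cdot m + (\text{small})$... this needs the standard Ford-circle tangency computation), hence $a^\ast$ is one of the allowed indices and $\gcd(a^\ast,N)=1$. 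The containment $D_{\frac a{mN}}\subseteq D_{\frac{a^\ast}{N}}$ is the elementary estimate $|\frac a{mN}-\frac{a^\ast}{N}|+\frac1{mN}\le\frac1N$, i.e.\ $|\frac am-a^\ast|\le 1-\frac1m=\frac{m-1}{m}$, which holds since $|\frac am - a^\ast|\le\frac12$ for $m\ge 2$ --- but I must also handle the boundary tangency carefully to make sure $a^\ast$ is genuinely interior. \textbf{Step 2 (minimality).} Conclude $S_{N,n_l}\subseteq\{\frac aN: n_l<a<n_{l+1}\}$, and note each such disk contributes a boundary arc (consecutive disks $D_{\frac aN},D_{\frac{a+1}N}$ are externally tangent at $\frac{2a+1}{2N}$, lying strictly inside the strip), so in fact $S_{N,n_l}$ equals that whole set. \textbf{Step 3 (complexity zero).} For each north pole $P_{\frac aN}=\frac aN+\frac1N i$: its imaginary part $\frac1N$ is the largest among all disks $D_\beta$ with $\beta\in S_{N,n_l}$ (they all have radius $\frac1N$), and $P_{\frac aN}$ is the topmost point of its own disk; any other disk $D_{\frac{a'}{N}}$ of the same radius with $a'\ne a$ does not contain $\frac aN+\frac1N i$ since $|\frac aN - \frac{a'}N|\ge\frac1N$ gives $|P_{\frac aN}-\frac{a'}N|\ge\sqrt{\frac1{N^2}+\frac1{N^2}}>\frac1N$. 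Hence $c(P_{\frac aN})=0$ for all $\alpha\in S_{N,n_l}$, so $c(N,n_l)=0$.

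\textbf{Main obstacle.} The routine algebra in Step 1 --- the inequality $|\frac am-a^\ast|\le\frac{m-1}m$ --- is trivial; the real care is in the boundary/tangency bookkeeping, i.e.\ ruling out that the nearest integer $a^\ast$ equals $n_l$ or $n_{l+1}$ (which would drop a disk out of the allowed family). This is where one uses that $\gcd(N,n_l)>1$ and $\gcd(N,n_{l+1})>1$ together with $\gcd(a,mN)=1$: if $\frac a{mN}$ is close enough to $\frac{n_l}{N}$ that $a^\ast = n_l$, one shows $a = m n_l \pm 1$ or similar, and checks these still have $\gcd(a,mN)=1$, so the disk $D_{\frac a{mN}}$ sits in the \emph{adjacent} strip or is tangent at the endpoint $\frac{n_l}{N}$ --- by Lemma~\ref{notcoprime} the endpoint is already on $\partial\cl R_N$, so such a disk contributes no interior boundary arc to $\cl R_{N,n_l}$ and may be discarded. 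I would isolate this endpoint analysis as the one place demanding attention; everything else is a direct computation with Ford circles.
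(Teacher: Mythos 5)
Your overall strategy is sound and close in spirit to the paper's: after resolving your initial confusion about the index range (correctly — for a good pair $n_{l+1}\geq n_l+2$, so $\{a : n_l<a<n_{l+1}\}$ is nonempty and consists of integers coprime to $N$), you cover each disk $D_{a/(mN)}$ with $m\geq 2$ by a single unit disk $D_{a^*/N}$, whereas the paper covers all of them at once by a swept tube of radius $\tfrac{1}{2N}$ contained in the union of the $D_{a/N}$. Either route works, and your Steps 2 and 3 are fine.

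The genuine problem is your resolution of the endpoint case, which you yourself identify as the crux. If the nearest integer $a^*$ to $a/m$ equals $n_l$, the disk $D_{a/(mN)}$ cannot be ``discarded''. Writing $a=mn_l+k$ with $0\leq k\leq m/2$, the condition $\gcd(a,mN)=1$ forces $k\geq 1$ (if $k=0$ then $a=mn_l$ shares the factor $m>1$ with $mN$), but $k$ need not be $\pm 1$: for example $m=10$, $a=10n_l+3$ gives a disk centered at $\tfrac{n_l}{N}+\tfrac{3}{10N}$, well in the interior of the strip, not ``in the adjacent strip or tangent at the endpoint''. Such a disk genuinely contributes to $\cl{R}_{N,n_l}$, so it must be shown to be covered by the unit disks; asserting that it ``contributes no boundary arc'' is assuming exactly what is to be proved. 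The fix is immediate and of the same type as your main estimate: take $a^*=n_l+1$ instead, which is a legitimate index because the pair is good (so $n_l+1<n_{l+1}$ and $\gcd(N,n_l+1)=1$), and then $\big|\tfrac{a}{m}-(n_l+1)\big|=\tfrac{m-k}{m}\leq\tfrac{m-1}{m}$ precisely because $k\geq 1$, giving $D_{a/(mN)}\subseteq D_{(n_l+1)/N}$. The symmetric argument handles the right endpoint. With that replacement your proof is complete.
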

\begin{proof}
Let $\cl{R}=\bigcup_{n_l<a<n_{l+1}}D_{\frac{a}{N}}$. It is clear from the definition that $\gcd(N,a)=1$ for $n_l<a<n_{l+1}$, so $\cl{R}_{N,n_l}\supseteq\cl{R}$. For the reverse inclusion, consider any $D_\frac{a}{b}$ with $N\divides b$, $\gcd(a,b)=1$, and $\frac{n_l}{N}\leq \frac{a}{b}\leq \frac{n_{l+1}}{N}$. If $b=N$, then indeed $D_{\frac{a}{b}}\subseteq\cl{R}$. Suppose that $b\geq 2N$. Then, $D_\frac{a}{b}$ is contained in the region
\begin{align*}
    \bigcup_{\frac{2n_l+1}{2N}\leq\alpha\leq\frac{2n_{l+1}-1}{2N}}D\big(\alpha,\tfrac{1}{2N}\big)
\end{align*}
where $\alpha$ ranges over all real numbers in the interval. Visually, this is the region swept by a closed disk from $D\big(\frac{2n_l+1}{2N},\frac{1}{2N}\big)$ to $D\big(\frac{2n_{l+1}-1}{2N},\frac{1}{2N}\big)$ (see the green region in \Cref{visual-proof-good-region}). Indeed, this swept region is contained in~$\cl{R}$ since $\frac{1}{2N}<\frac{\sqrt{3}}{2N}$, and hence $D_\frac{a}{b}\subseteq\cl{R}$.

\begin{figure}[H]
\centering
\begin{tikzpicture}[scale=1.18]
\fill[fill=green!20!white,draw=green] (0.5,0.5) arc [start angle=90, end angle=270, radius=0.5cm];
\fill[fill=green!20!white] (0.49,-0.5) rectangle (8.51,0.5);
\fill[fill=green!20!white,draw=green] (8.5,-0.5) arc [start angle=-90, end angle=90, radius=0.5cm];
\draw[green](0.49,0.5)--(8.51,0.5);
\draw[green](0.49,-0.5)--(8.51,-0.5);
\draw[gray, thin](0,0)--(9,0);
\draw[dotted](4.2,0.7)--(4.8,0.7);
\draw[dotted](4.2,-0.7)--(4.8,-0.7);
\draw (1,0) circle [radius=1cm];
\draw (2,0) circle [radius=1cm];
\draw (3,0) circle [radius=1cm];
\draw (4,1) arc [start angle=90, end angle=270, radius=1cm];
\draw (5,-1) arc [start angle=-90, end angle=90, radius=1cm];
\draw (6,0) circle [radius=1cm];
\draw (7,0) circle [radius=1cm];
\draw (8,0) circle [radius=1cm];
\draw (0,0) node[below=9pt, left=1pt]{$\frac{n_l}{N}$};
\draw (9,0) node[below=9pt, right=1pt]{$\frac{n_{l+1}}{N}$};
\end{tikzpicture}
\caption{\label{visual-proof-good-region}Visualization of the proof}
\end{figure}
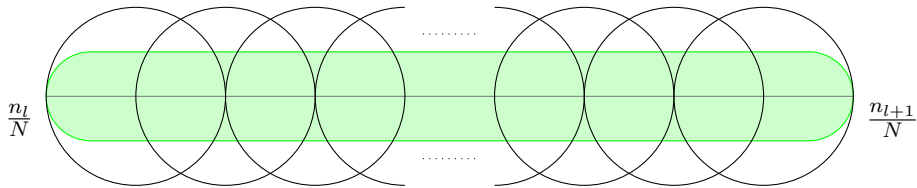
\end{proof}

\begin{corollary}
\label{badregion}
The complexity of $N$ only depends on the complexity of the bad pairs $(N,n)$, i.e., $c(N)=0$ if all the pairs $(N,n)$ are good, and
\begin{align*}
    c(N)\eq\max_{(N,n)\text{ bad}}c(N,n)\eq\max_{\substack{0\leq n\leq N-1 \\ \gcd(N,n)>1 \\ \gcd(N,n+1)>1}}c(N,n)
\end{align*}
otherwise. In particular, $c(N)=0$ if and only if $c(N,n)=0$ for all bad pairs $(N,n)$.
\end{corollary}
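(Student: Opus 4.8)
The plan is to obtain this as a purely formal consequence of \Cref{paircomp} and \Cref{goodregion}. First I would note that for every $N>1$ the pair $(N,0)$ is admissible, since $\gcd(N,0)=N>1$, so the set of pairs $(N,n)$ over which \Cref{paircomp} takes its maximum is nonempty and the identity
\begin{align*}
    c(N)\eq\max_{(N,n)}c(N,n)
\end{align*}
is meaningful. I would then split the index set into good and bad pairs, writing
\begin{align*}
    c(N)\eq\max\Big(\max_{(N,n)\text{ good}}c(N,n),\ \max_{(N,n)\text{ bad}}c(N,n)\Big),
\end{align*}
with the understanding that a maximum over an empty class is simply omitted; the point of the case distinction in the statement is precisely to record what happens when one of these two classes is empty.

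Next I would invoke \Cref{goodregion}, which gives $c(N,n)=0$ for every good pair, hence $\max_{(N,n)\text{ good}}c(N,n)=0$ whenever good pairs are present. If \emph{every} pair $(N,n)$ is good, the display above collapses to $c(N)=0$, which is the first assertion. If instead at least one bad pair exists, then, since each $c(N,n)$ is a nonnegative integer, the quantity $\max_{(N,n)\text{ bad}}c(N,n)$ is itself $\geq 0$ and therefore absorbs the (possibly present) good contribution of $0$; thus $c(N)=\max_{(N,n)\text{ bad}}c(N,n)$. Unwinding the definition of a bad pair as $\gcd(N,n+1)>1$ (on top of the standing conditions $0\leq n\leq N-1$ and $\gcd(N,n)>1$) produces the second equality in the statement.

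Finally, the ``in particular'' clause is immediate from the above: when all pairs are good, $c(N)=0$ and the condition ``$c(N,n)=0$ for all bad pairs'' holds vacuously; when a bad pair exists, $c(N)=\max_{(N,n)\text{ bad}}c(N,n)$, and a maximum of nonnegative integers vanishes if and only if each term does. I do not expect any real obstacle here, as the corollary is bookkeeping layered on the two preceding lemmas; the only point meriting a word of care is the convention for the empty maximum, i.e.\ treating the ``all pairs good'' case separately rather than letting it be silently swallowed by an ill-defined maximum over bad pairs.
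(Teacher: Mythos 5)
Your proposal is correct and follows exactly the route the paper intends (the paper gives no proof, treating the corollary as an immediate consequence of \Cref{paircomp} and \Cref{goodregion}): split the maximum over all pairs into good and bad, note the good pairs contribute $0$, and handle the all-good case separately. Nothing further is needed.
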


At this point one can already see how the number of distinct prime factors of $N$ plays a role in the behavior of $c(N)$. If $\omega(N)=1$, i.e., $N$ is a prime power, then all the pairs $(N,n)$ are good, and hence $c(N)=0$.

As a result of \Cref{badregion}, we will almost always focus on the bad pairs $(N,n)$ and hence the bad regions $\cl{R}_{N,n}$ from now on. In fact, we only need to focus on half of the bad pairs by the following lemma.

\begin{lemma}
\label{symmetric}
Let $(N,n)$ be a bad pair. Then, $(N,N-n-1)$ is also a bad pair, and
\begin{align*}
    c(N,n)\eq c(N,N-n-1).
\end{align*}
\end{lemma}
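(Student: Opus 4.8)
The plan is to exploit the reflection symmetry of $\cl{R}_N$ across the vertical line $\Re(z)=\tfrac12$. Concretely, let $\sigma\colon\C\to\C$ be the isometry $\sigma(z)=1-\ov{z}$. For any rational $\tfrac{a}{b}\in[0,1]$ in lowest terms with $N\divides b$, one computes $\sigma(D_{a/b})=D\big(\tfrac{b-a}{b},\tfrac1b\big)$, and here $\tfrac{b-a}{b}$ is again a rational number in $[0,1]$, in lowest terms since $\gcd(b-a,b)=\gcd(a,b)=1$, and still has $N\divides b$. Hence the family of disks defining $\cl{R}_N$ is invariant under $\sigma$, so $\sigma(\cl{R}_N)=\cl{R}_N$ and $\sigma$ carries $\partial\cl{R}_N$ onto itself. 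Since $\sigma$ also sends $\partial D_\alpha$ isometrically onto $\partial D_{1-\alpha}$, the visual characterization of $S_N$ recalled in the introduction (``an arc of positive length of $\partial D_\alpha$ lies on $\partial\cl{R}_N$'') gives $\alpha\in S_N\iff 1-\alpha\in S_N$; by the uniqueness and minimality of $S_N$, the map $\alpha\mapsto 1-\alpha$ is thus an involution of $S_N$.

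Next I would track north poles and the complexity count under $\sigma$. Since $\sigma(P_\alpha)=\sigma\big(\tfrac{a}{b}+\tfrac{i}{b}\big)=\tfrac{b-a}{b}+\tfrac{i}{b}=P_{1-\alpha}$, and since $\sigma$ is a bijection of $\cl{R}_N$ permuting $\{D_\beta\}_{\beta\in S_N}$ via $\beta\mapsto 1-\beta$, we get $P_\alpha\in D_\beta\iff P_{1-\alpha}\in D_{1-\beta}$ for all $\alpha,\beta\in S_N$. Therefore $\beta\mapsto 1-\beta$ restricts to a bijection from $\{\beta\in S_N\mid P_\alpha\in D_\beta\}$ onto $\{\beta\in S_N\mid P_{1-\alpha}\in D_\beta\}$, whence $c(P_\alpha)=c(P_{1-\alpha})$ for every $\alpha\in S_N$.

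Finally I would connect this to the pair structure. For a bad pair $(N,n)$ we have $\gcd(N,n)>1$ and $\gcd(N,n+1)>1$, so $n+1$ is the successor of $n$ among the integers not coprime to $N$, and $\cl{R}_{N,n}=\{z\in\cl{R}_N\mid \tfrac{n}{N}\le\Re(z)\le\tfrac{n+1}{N}\}$ by \eqref{region}. Applying $\sigma$ and using $\Re(\sigma(z))=1-\Re(z)$ yields $\sigma(\cl{R}_{N,n})=\{z\in\cl{R}_N\mid \tfrac{N-n-1}{N}\le\Re(z)\le\tfrac{N-n}{N}\}$ (equivalently, $\sigma$ sends the disks $D_{a/b}$ with center in $[\tfrac nN,\tfrac{n+1}N]$ to those with center in $[\tfrac{N-n-1}N,\tfrac{N-n}N]$). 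Now $\gcd(N,N-n-1)=\gcd(N,n+1)>1$ and $\gcd(N,N-n)=\gcd(N,n)>1$, and $0\le N-n-1\le N-1$, so $(N,N-n-1)$ is again a bad pair and the right-hand side is exactly $\cl{R}_{N,N-n-1}$. By the same uniqueness/minimality argument as for $S_N$, $\sigma$ maps $S_{N,n}$ bijectively onto $S_{N,N-n-1}$ via $\alpha\mapsto 1-\alpha$. Combining this with the previous paragraph,
\begin{align*}
    c(N,n)&=\max_{\alpha\in S_{N,n}}c(P_\alpha)=\max_{\alpha\in S_{N,n}}c(P_{1-\alpha})\\
    &=\max_{\beta\in S_{N,N-n-1}}c(P_\beta)=c(N,N-n-1).
\end{align*}

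The whole argument is short and essentially formal. The one point that needs a little care — and which I would regard as the main (mild) obstacle — is the uniqueness bookkeeping: one must check that the visual characterization of the minimal sets $S_N$ and $S_{N,n}$ is genuinely transported by the isometry $\sigma$, so that $\sigma$ induces the claimed \emph{bijections} of these sets rather than merely inclusions between the corresponding families of disks. Once that is nailed down, the statements about $P_\alpha$, the disks $D_\beta$, and the real parts are all routine computations.
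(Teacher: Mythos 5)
Your proof is correct and follows essentially the same route as the paper: both exploit the reflection $z\mapsto 1-\ov{z}$ across the line $\Re(z)=\tfrac12$, which permutes the generating disks via $D_{a/b}\mapsto D_{(b-a)/b}$ and hence carries $\cl{R}_{N,n}$ isometrically onto $\cl{R}_{N,N-n-1}$. The paper simply compresses the bookkeeping you carry out explicitly (the induced bijections of $S_{N,n}$ and the matching of north poles) into the remark that complexity depends only on the shape of the region.
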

\begin{proof}
It is easy to see from the definition that $(N,N-n-1)$ is a bad pair. Note that for a rational number $\frac{a}{b}$ with $\gcd(a,b)=1$,
\begin{align*}
    D\big(\tfrac{a}{b},\tfrac{1}{b}\big)\subseteq \cl{R}_{N,n}&\Longleftrightarrow N\divides b,\;\gcd(a,b)=1,\;\tfrac{n}{N}\leq\tfrac{a}{b}\leq\tfrac{n+1}{N} \\
    &\Longleftrightarrow N\divides b,\;\gcd(b-a,b)=1,\;\tfrac{N-n-1}{N}\leq\tfrac{b-a}{b}\leq\tfrac{N-n}{N} \\
    &\Longleftrightarrow D\big(\tfrac{b-a}{b},\tfrac{1}{b}\big)\subseteq\cl{R}_{N,N-n-1}.
\end{align*}
Also, $D\big(\frac{a}{b},\frac{1}{b}\big)$ and $D\big(\frac{b-a}{b},\frac{1}{b}\big)$ are symmetric with respect to the line $\Re(z)=1/2$. This then means that $\cl{R}_{N,n}$ and $\cl{R}_{N,N-n-1}$ are also symmetric with respect to the line $\Re(z)=1/2$. In particular, as the complexity only depends on the shape of the regions $\cl{R}_{N,n}$ and $\cl{R}_{N,N-n-1}$, we have $c(N,n)=c(N,N-n-1)$.
\end{proof}

\section{The case when $\omega(N)\leq 2$}
\label{omega2-section}

In this section, we will show that $c(N)=0$ when $\omega(N)\leq 2$. We first prove a lemma on bad regions, and the result essentially yields from this lemma. The main idea behind this lemma is that disks of radius $\frac{1}{N}$ cannot appear in the bad region, so we are forced to look at disks of radius $\frac{1}{2N}$. As before, a pair $(N,n)$ always means a pair of integers $(N,n)$ with $0\leq n\leq N-1$ and $\gcd(N,n)>1$.

\begin{lemma}
\label{midpoint}
Let $(N,n)$ be a bad pair and suppose that $\gcd(N,2n+1)=1$. Then, 
\begin{align*}
    \cl{R}_{N,n}\eq D_{\frac{2n+1}{2N}}.
\end{align*}
In particular, $c(N,n)=0$.
\end{lemma}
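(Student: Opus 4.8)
The plan is to prove the two set inclusions separately, using throughout the description $\cl{R}_{N,n}=\bigcup D_{\frac{a}{b}}$, where $\frac{a}{b}$ ranges over rationals in $[\frac{n}{N},\frac{n+1}{N}]$ with $N\divides b$ and $\gcd(a,b)=1$.

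For $D_{\frac{2n+1}{2N}}\subseteq\cl{R}_{N,n}$ it suffices to observe that $\frac{2n+1}{2N}$ is itself an admissible index: clearly $\frac{n}{N}\leq\frac{2n+1}{2N}\leq\frac{n+1}{N}$, and $\gcd(2n+1,2N)=1$ since $2n+1$ is odd and $\gcd(2n+1,N)=1$ by hypothesis. Thus $D_{\frac{2n+1}{2N}}$ is one of the disks whose union defines $\cl{R}_{N,n}$.

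For the reverse inclusion $\cl{R}_{N,n}\subseteq D_{\frac{2n+1}{2N}}$, I would take an arbitrary admissible disk $D_{\frac{a}{b}}$ and write $b=kN$ with $k\geq 1$, so the constraint on its center reads $kn\leq a\leq k(n+1)$. When $k=1$ there is no admissible disk at all: one would need $a\in\{n,n+1\}$, but $(N,n)$ is a bad pair, so $\gcd(N,n)>1$ and $\gcd(N,n+1)>1$, contradicting $\gcd(a,b)=\gcd(a,N)=1$; this is the only place the ``bad'' hypothesis enters. When $k\geq 2$, I claim $D_{\frac{a}{kN}}\subseteq D_{\frac{2n+1}{2N}}$. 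By the elementary criterion $D(c_1,r_1)\subseteq D(c_2,r_2)\Leftrightarrow |c_1-c_2|+r_1\leq r_2$, this is equivalent, after clearing denominators by $2kN$, to the integer inequality
\begin{align*}
    |2a-k(2n+1)|\;\leq\;k-2.
\end{align*}
From $kn\leq a\leq k(n+1)$ one gets $|2a-k(2n+1)|\leq k$ at once, so it remains to exclude the values $k$ and $k-1$. The value $k$ is impossible because it forces $a\in\{kn,\,k(n+1)\}$, whence $k\divides\gcd(a,b)$, contradicting $\gcd(a,b)=1$ as $k\geq 2$. The value $k-1$ is impossible by parity, since $2a-k(2n+1)\equiv k\pmod 2$ while $k-1\not\equiv k\pmod 2$. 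Hence $|2a-k(2n+1)|\leq k-2$, proving the claim; combining the two inclusions gives $\cl{R}_{N,n}=D_{\frac{2n+1}{2N}}$.

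Finally, since $\cl{R}_{N,n}$ is then a single disk, its minimal defining set is $S_{N,n}=\{\frac{2n+1}{2N}\}$, and the north pole of $\frac{2n+1}{2N}$ — the unique topmost point of the region — lies on no other disk appearing on the boundary, so $c(N,n)=0$. I do not expect a genuine obstacle here: the content of the lemma is just that in a bad region the largest available disks have radius $\tfrac1N$ (excluded) or $\tfrac1{2N}$ (a unique one), and everything smaller fits inside that one. The only mildly delicate point is tracking why both boundary values $k$ and $k-1$ drop out, and via genuinely different mechanisms — a divisibility obstruction for $k$, a parity obstruction for $k-1$.
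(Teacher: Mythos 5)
Your proof is correct and follows essentially the same route as the paper: both inclusions are checked, and the reverse one amounts to showing every admissible disk $D_{a/kN}$ fits inside the half-disk $D_{\frac{2n+1}{2N}}$. The only difference is mechanical: the paper excludes the extreme positions by citing Lemma~\ref{notcoprime} (the endpoints $\frac{n}{N},\frac{n+1}{N}$ are boundary points of $\cl{R}_N$, so no disk may protrude past them, which already forces $kn+1\leq a\leq k(n+1)-1$ and hence $|2a-k(2n+1)|\leq k-2$), whereas you re-derive the same bound self-containedly via the divisibility obstruction for $|2a-k(2n+1)|=k$ and the parity obstruction for $k-1$. Both are valid; yours is slightly longer but independent of Lemma~\ref{notcoprime}.
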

\begin{proof}
It is clear that $\cl{R}_{N,n}\supseteq D_{\frac{2n+1}{2N}}$. The reverse inclusion is also clear since any disk in $\cl{R}_{N,n}$ cannot go past the endpoints $\frac{n}{N}$ and $\frac{n+1}{N}$ by \Cref{notcoprime} and hence must stay inside $D_{\frac{2n+1}{2N}}$.
\end{proof}

Below is a visualization of the case when $N=15$. The good regions are colored in blue while the bad regions are colored in red. The two bad regions are exactly of the form as described in \Cref{midpoint}.

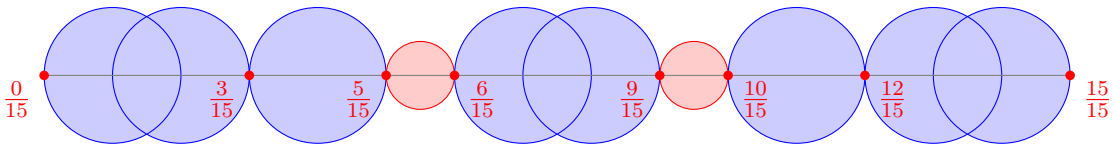
\begin{figure}[H]
\centering
\begin{tikzpicture}[scale=0.9]
\fill[fill=blue!20!white] (1,0) circle [radius=1cm];
\fill[fill=blue!20!white] (2,0) circle [radius=1cm];
\fill[fill=blue!20!white] (4,0) circle [radius=1cm];
\fill[fill=blue!20!white] (7,0) circle [radius=1cm];
\fill[fill=blue!20!white] (8,0) circle [radius=1cm];
\fill[fill=blue!20!white] (11,0) circle [radius=1cm];
\fill[fill=blue!20!white] (13,0) circle [radius=1cm];
\fill[fill=blue!20!white] (14,0) circle [radius=1cm];
\fill[fill=red!20!white] (5.5,0) circle [radius=0.5cm];
\fill[fill=red!20!white] (9.5,0) circle [radius=0.5cm];
\draw[blue] (1,0) circle [radius=1cm];
\draw[blue] (2,0) circle [radius=1cm];
\draw[blue] (4,0) circle [radius=1cm];
\draw[red] (5.5,0) circle [radius=0.5cm];
\draw[blue] (7,0) circle [radius=1cm];
\draw[blue] (8,0) circle [radius=1cm];
\draw[red] (9.5,0) circle [radius=0.5cm];
\draw[blue] (11,0) circle [radius=1cm];
\draw[blue] (13,0) circle [radius=1cm];
\draw[blue] (14,0) circle [radius=1cm];
\draw (0,0) node[below=9pt, left=1pt, red]{$\frac{0}{15}$};
\draw (3,0) node[below=9pt, left=1pt, red]{$\frac{3}{15}$};
\draw (5,0) node[below=9pt, left=1pt, red]{$\frac{5}{15}$};
\draw (6,0) node[below=9pt, right=1pt, red]{$\frac{6}{15}$};
\draw (9,0) node[below=9pt, left=1pt, red]{$\frac{9}{15}$};
\draw (10,0) node[below=9pt, right=1pt, red]{$\frac{10}{15}$};
\draw (12,0) node[below=9pt, right=1pt, red]{$\frac{12}{15}$};
\draw (15,0) node[below=9pt, right=1pt, red]{$\frac{15}{15}$};
\draw[gray, thin](0,0)--(15,0);
\fill[red] (0,0) circle [radius=0.07cm];
\fill[red] (3,0) circle [radius=0.07cm];
\fill[red] (5,0) circle [radius=0.07cm];
\fill[red] (6,0) circle [radius=0.07cm];
\fill[red] (9,0) circle [radius=0.07cm];
\fill[red] (10,0) circle [radius=0.07cm];
\fill[red] (12,0) circle [radius=0.07cm];
\fill[red] (15,0) circle [radius=0.07cm];
\end{tikzpicture}
\caption{Visualization of $\cl{R}_{15}$}
\end{figure}

\begin{corollary}
\label{2coprime}
For any pair $(N,n)$, $c(N,n)=0$ if one of
\begin{align*}
    n+1,\;2n+1
\end{align*}
is coprime to $N$.
\end{corollary}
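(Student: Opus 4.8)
The plan is to settle this by a short case split on whether the pair $(N,n)$ is good or bad, feeding directly into \Cref{goodregion} and \Cref{midpoint}. First I would observe that, since $(N,n)$ is a pair, we have $\gcd(N,n)>1$ by our standing convention, so the option ``$n$ coprime to $N$'' can never actually occur; it is listed only so that the hypothesis is manifestly invariant under the involution $n\mapsto N-n-1$ of \Cref{symmetric} (which sends $n$, $n+1$, $2n+1$ to $-(n+1)$, $-n$, $-(2n+1)$ modulo $N$, and hence permutes the three coprimality conditions), and there is nothing to prove in that case. So in practice one assumes that $n+1$ or $2n+1$ is coprime to $N$.

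Next I would handle the good case: if $\gcd(N,n+1)=1$ then $(N,n)$ is good, and \Cref{goodregion} gives $c(N,n)=0$ outright, using nothing about $2n+1$. Then the bad case: if $(N,n)$ is bad then $\gcd(N,n+1)>1$, so among the three listed conditions only ``$2n+1$ coprime to $N$'' can be the one that holds; hence $\gcd(N,2n+1)=1$, and \Cref{midpoint} identifies $\cl{R}_{N,n}$ with the single disk $D_{\frac{2n+1}{2N}}$, so $c(P_\alpha)=0$ for the unique $\alpha\in S_{N,n}$ and therefore $c(N,n)=0$. Since ``good'' and ``bad'' exhaust all pairs, this completes the argument.

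I do not expect any genuine obstacle here: all the work has already been done in \Cref{goodregion} and \Cref{midpoint}, and the only thing to verify is the bookkeeping that the three coprimality conditions correspond exactly to ``$(N,n)$ good'' (the $n+1$ clause), ``$(N,n)$ bad with $2n+1$ coprime'' (which is precisely the hypothesis of \Cref{midpoint}), and the vacuous ``$n$ coprime'' clause. The one point worth a sentence in the write-up is why the ``$n$ coprime'' clause appears at all, namely the $n\mapsto N-n-1$ symmetry noted above; alternatively, even if one reads the corollary for an arbitrary $n$ with $0\le n\le N-1$, then $\tfrac nN$ is already in lowest terms, no region $\cl{R}_{N,n}$ is attached to it, and again there is nothing to prove.
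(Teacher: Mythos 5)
Your proposal is correct and follows essentially the same route as the paper: split into the good case (handled by \Cref{goodregion}) and the bad case, where the hypothesis forces $\gcd(N,2n+1)=1$ and \Cref{midpoint} applies. The extra remarks about the vacuous ``$n$ coprime'' clause are fine but not needed; the paper simply notes that for a pair one has $\gcd(N,n)>1$ and proceeds with the same two-case argument.
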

\begin{proof}
Fix a pair $(N,n)$. If \mbox{$\gcd(N,n+1)=1$}, then $(N,n)$ is a good pair, so $c(N,n)=0$ by \Cref{goodregion}. If \mbox{$\gcd(N,n+1)>1$}, then $(N,n)$ is a bad pair and $\gcd(N,2n+1)=1$ by the assumption, so $c(N,n)=0$ by \Cref{midpoint}.
\end{proof}

\begin{proposition}
\label{propomega2}
Let $N>1$ be an integer. Then, $c(N)=0$ if for any integer $n$ with $0\leq n\leq N-1$, one of
\begin{align*}
    n,\;n+1,\;2n+1
\end{align*}
is coprime to $N$. 
\end{proposition}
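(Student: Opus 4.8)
The plan is to deduce the proposition directly from the structural results already established, with no new geometry required. The key observation is that the hypothesis is precisely the pointwise condition appearing in \Cref{2coprime}, quantified over a range of $n$ that is wide enough to capture every pair $(N,n)$ relevant to the complexity.

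Concretely, I would proceed as follows. First, recall from \Cref{paircomp} that $c(N)=\max c(N,n)$, the maximum being taken over all pairs $(N,n)$, that is, over all integers $n$ with $0\leq n\leq N-1$ and $\gcd(N,n)>1$; in particular it suffices to show $c(N,n)=0$ for every such pair. Second, fix an arbitrary pair $(N,n)$. Since $0\leq n\leq N-1$, the hypothesis of the proposition applies to this $n$, so one of $n$, $n+1$, $2n+1$ is coprime to $N$. (As $\gcd(N,n)>1$, it is automatically $n+1$ or $2n+1$ that does the job, though this sharpening is not needed.) By \Cref{2coprime} this forces $c(N,n)=0$. Since the pair $(N,n)$ was arbitrary, $c(N,n)=0$ for all pairs, and \Cref{paircomp} then yields $c(N)=0$.

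I do not anticipate a genuine obstacle: the entire content is already packaged in \Cref{2coprime}, which in turn rests on \Cref{goodregion} and \Cref{midpoint}. The only point requiring a moment of care is the matching of quantifiers — one must note that every pair $(N,n)$ has its $n$ in the interval $[0,N-1]$, so that the hypothesis ``for any integer $n$ with $0\leq n\leq N-1$'' indeed covers all pairs fed into \Cref{paircomp}. Equivalently, one could route the argument through \Cref{badregion}, reducing to bad pairs $(N,n)$, for which $n$ and $n+1$ are both non-coprime to $N$, so that the hypothesis forces $2n+1$ coprime to $N$, whereupon \Cref{midpoint} applies directly and gives $\cl{R}_{N,n}=D_{\frac{2n+1}{2N}}$ and $c(N,n)=0$.
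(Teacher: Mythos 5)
Your proof is correct and is exactly the paper's argument: the paper's proof of this proposition is the one-line deduction from \Cref{2coprime} and \Cref{paircomp}, which is precisely what you carry out (your quantifier check and the alternative routing through \Cref{badregion} are fine but add nothing essential).
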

\begin{proof}
By \Cref{paircomp}, it suffices to show that $c(N,n)=0$ for all the pairs $(N,n)$. Now, for a fixed pair $(N,n)$, the assumption implies that one of $n+1$ and $2n+1$ is coprime to $N$, so $c(N,n)=0$ by \Cref{2coprime}. The result then follows.
\end{proof}

\begin{corollary}
Let $N>1$ be an integer. If $\omega(N)\leq 2$, then $c(N)=0$.
\end{corollary}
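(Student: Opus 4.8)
The plan is to invoke \Cref{propomega2}, which reduces the claim to a purely elementary statement about residues: it suffices to show that whenever $\omega(N)\leq 2$, for every integer $n$ with $0\leq n\leq N-1$ at least one of $n$, $n+1$, $2n+1$ is coprime to $N$. Since coprimality to $N$ only depends on the prime factors of $N$, and $N$ has at most two distinct prime factors, say $p$ and $q$ (with the case $\omega(N)=1$ obtained by taking $q=p$), I would argue by contradiction: assume that none of $n$, $n+1$, $2n+1$ is coprime to $N$, so each of these three integers is divisible by $p$ or by $q$.

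First I would dispose of the consecutive integers $n$ and $n+1$: they are coprime, so they cannot share a common prime factor. Hence one of them is divisible only by (a power of) one of the two primes. If $\omega(N)=1$ this is already a contradiction, since then every integer not coprime to $N$ is divisible by $p$, forcing $p\mid n$ and $p\mid n+1$. If $\omega(N)=2$, then after relabelling $p$ and $q$ we may assume $p\mid n$ and $q\mid n+1$ (and neither $q\mid n$ nor $p\mid n+1$ is needed).

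Next I would examine $2n+1$. By assumption $p\mid 2n+1$ or $q\mid 2n+1$. In the first case, $p\mid n$ gives $p\mid 2n$, hence $p\mid(2n+1)-2n=1$, a contradiction. In the second case, $q\mid n+1$ gives $q\mid 2n+2$, hence $q\mid(2n+2)-(2n+1)=1$, again a contradiction. Either way we reach a contradiction, so the hypothesis of \Cref{propomega2} is satisfied and $c(N)=0$.

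I do not expect any genuine obstacle here: the entire argument is a one-line pigeonhole observation (two consecutive integers cannot both be divisible by the same prime) combined with the trivial identities $2n+1\equiv 1\pmod p$ when $p\mid n$ and $2n+1\equiv -1\pmod q$ when $q\mid n+1$. The only thing to be careful about is presenting the $\omega(N)=1$ case uniformly with $\omega(N)=2$, which the "take $q=p$" convention handles cleanly.
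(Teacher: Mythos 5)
Your proof is correct and follows the same route as the paper: both reduce to \Cref{propomega2} and exploit the pairwise coprimality of $n$, $n+1$, $2n+1$, so that each prime factor of $N$ can divide at most one of them. The paper states this in one line; your case analysis is just an expanded version of the same observation.
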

\begin{proof}
For any $0\leq n\leq N-1$, the three integers $n,n+1$, and $2n+1$ are pairwise coprime, so $N$ cannot have common prime factor with all three of them. Hence, one of $n,n+1,2n+1$ is coprime to $N$, so $c(N)=0$ by \Cref{propomega2}.   
\end{proof}

The two key ingredients that we use here are \Cref{propomega2}, which gives a characterization of $c(N)$ via the coprimality of $N$ to three particular integers, and that the three particular integers are pairwise coprime, which forces $N$ with $\omega(N)\leq 2$ to be coprime to one of them. In the analysis of the remaining cases, we will also make use of the assumption on $\omega(N)$ (and $p_1(N)$) in this way. That is, we will first characterize $c(N)$ via the coprimality of $N$ to certain integers, and then show that those integers are pairwise coprime (or can only have small common prime divisors), which then forces $N$ with prescribed assumptions on $\omega(N)$ (and $p_1(N)$) to have certain complexity $c(N)$.

\section{The case when $\omega(N)=3$}
\label{omega3-section}

In this section, we will show that $c(N)=0$ when $\omega(N)=3$. This section is divided into two parts, $p_1(N)\geq 3$ and $p_1(N)=2$, i.e., $N$ is odd and $N$ is even. The odd case is more straightforward while the even case is more subtle.

\subsection{$\omega(N)=3$ and $p_1(N)\geq 3$}

As before, we first prove a lemma on bad regions. This lemma can be viewed as a further investigation following \Cref{midpoint} in the sense that we now have to discuss disks of radius $\frac{1}{3N}$.

\begin{lemma}
\label{threepoint}
Let $(N,n)$ be a bad pair. Suppose that $\gcd(N,2n+1)>1$ and 
$\gcd(N,3n+1)=\gcd(N,3n+2)=1$. Then, 
\begin{align*}
    \cl{R}_{N,n}\eq D_{\frac{3n+1}{3N}}\cup D_{\frac{3n+2}{3N}}.
\end{align*}
In particular, $c(N,n)=0$.
\end{lemma}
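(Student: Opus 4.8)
I would establish the two inclusions $\cl{R}_{N,n}\supseteq D_{\frac{3n+1}{3N}}\cup D_{\frac{3n+2}{3N}}$ and $\cl{R}_{N,n}\subseteq D_{\frac{3n+1}{3N}}\cup D_{\frac{3n+2}{3N}}$ separately. The first is routine: since $3n+1\equiv 1$ and $3n+2\equiv 2\pmod 3$, and $\gcd(N,3n+1)=\gcd(N,3n+2)=1$ by hypothesis, both $3n+1$ and $3n+2$ are coprime to $3N$; as moreover $\tfrac{n}{N}\leq\tfrac{3n+1}{3N},\tfrac{3n+2}{3N}\leq\tfrac{n+1}{N}$, both disks appear among the defining disks of $\cl{R}_{N,n}$, so their union is contained in $\cl{R}_{N,n}$. (This is where the coprimality hypotheses on $3n+1$ and $3n+2$ enter, making the two radius-$\tfrac1{3N}$ disks legitimate members of the defining family.)

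For the reverse inclusion I would take an arbitrary defining disk $D_{\frac ab}\subseteq\cl{R}_{N,n}$, write $b=kN$ with $k\geq 1$ and $\gcd(a,kN)=1$, and observe that $\tfrac{n}{N}\leq\tfrac{a}{kN}\leq\tfrac{n+1}{N}$ forces $kn\leq a\leq kn+k$, while $a\ne kn$ and $a\ne k(n+1)$ because $\tfrac{n}{N},\tfrac{n+1}{N}$ lie on $\partial\cl{R}_N$ by \Cref{notcoprime} and so cannot be interior to $D_{\frac ab}\subseteq\cl{R}_N$; hence $kn+1\leq a\leq kn+k-1$. Now split on $k$. For $k=1$ this range is empty. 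For $k=2$ the only candidate is $a=2n+1$, excluded since $\gcd(2n+1,2N)=\gcd(2n+1,N)>1$ by the hypothesis $\gcd(N,2n+1)>1$. For $k=3$ the only candidates are $a=3n+1,3n+2$, both coprime to $3N$ as above, so $D_{\frac ab}$ is one of the two disks in our union.

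The real work is $k\geq 4$, where $D_{\frac ab}=D(c,\tfrac1{kN})$ has real center $c=\tfrac{a}{kN}\in[\tfrac{n}{N}+\tfrac1{kN},\tfrac{n+1}{N}-\tfrac1{kN}]$ and radius $\tfrac1{kN}\leq\tfrac1{4N}$. I would show $D(c,\tfrac1{kN})\subseteq D_{\frac{3n+1}{3N}}\cup D_{\frac{3n+2}{3N}}$ in three subcases. If $c\leq\tfrac{n}{N}+\tfrac2{3N}-\tfrac1{kN}$, then $\lvert c-\tfrac{3n+1}{3N}\rvert\leq\tfrac1{3N}-\tfrac1{kN}$, the matching lower bound $c\geq\tfrac{n}{N}+\tfrac1{kN}$ being automatic, so $D(c,\tfrac1{kN})\subseteq D_{\frac{3n+1}{3N}}$; symmetrically $c\geq\tfrac{n}{N}+\tfrac1{3N}+\tfrac1{kN}$ gives $D(c,\tfrac1{kN})\subseteq D_{\frac{3n+2}{3N}}$. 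The leftover range $\tfrac{n}{N}+\tfrac2{3N}-\tfrac1{kN}<c<\tfrac{n}{N}+\tfrac1{3N}+\tfrac1{kN}$ is nonempty only for $k\in\{4,5\}$, and a short computation then places $c$ strictly between the two centers $\tfrac{3n+1}{3N}$ and $\tfrac{3n+2}{3N}$. I would finish with the elementary fact that the union of two closed disks of radius $r$ whose centers lie at distance $d\leq 2r$ contains every closed disk of radius at most $\sqrt{r^2-(d/2)^2}$ centered on the segment joining the two centers. With $r=d=\tfrac1{3N}$ this bound is $\tfrac{\sqrt3}{6N}\geq\tfrac1{4N}\geq\tfrac1{kN}$, and $c$ lies on that segment, so $D(c,\tfrac1{kN})\subseteq D_{\frac{3n+1}{3N}}\cup D_{\frac{3n+2}{3N}}$. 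This proves $\cl{R}_{N,n}=D_{\frac{3n+1}{3N}}\cup D_{\frac{3n+2}{3N}}$, and then $c(N,n)=0$ because neither disk contains the other, so $S_{N,n}=\{\tfrac{3n+1}{3N},\tfrac{3n+2}{3N}\}$, while the north pole of each disk lies at distance $\tfrac{\sqrt2}{3N}>\tfrac1{3N}$ from the other center and so is uncovered by the other disk.

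The step I expect to be the main obstacle is the $k\geq 4$ geometry, and within it the middle range $k\in\{4,5\}$: there $D(c,\tfrac1{kN})$ is genuinely not contained in either radius-$\tfrac1{3N}$ disk alone, so one is forced to use their overlap, and the clean device for this---the ``stadium'' containment above---rests on the observation that, with $\rho=\sqrt{r^2-(d/2)^2}$, the extreme point of the stadium sits at distance exactly $\sqrt{(d/2)^2+\rho^2}=r$ from the nearer disk center. Everything else is bookkeeping of the inequalities among $\tfrac1{kN},\tfrac1{3N},\tfrac2{3N}$ and checking that the case split on $k$ and on the position of $c$ is exhaustive.
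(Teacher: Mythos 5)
Your proof is correct and follows essentially the same route as the paper: reduce to $b\geq 3N$, identify the two disks for $b=3N$, and for $b\geq 4N$ absorb every smaller disk into $D_{\frac{3n+1}{3N}}\cup D_{\frac{3n+2}{3N}}$ via the comparison $\tfrac{1}{4N}<\tfrac{\sqrt{3}}{6N}$, which is exactly the paper's swept-region (``stadium'') argument written out with the end caps made explicit. Your extra case split on the position of the center and the final check that neither north pole lies in the other disk are just more detailed bookkeeping of the same idea.
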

\begin{proof}
Write $\cl{R}=D_{\frac{3n+1}{3N}}\cup D_{\frac{3n+2}{3N}}$. It is clear that $\cl{R}_{N,n}\supseteq \cl{R}$. For the reverse 
inclusion, consider any $D_\frac{a}{b}$ with $N\divides b$, $\gcd(a,b)=1$, and $\frac{n}{N}\leq \frac{a}{b}\leq \frac{n+1}{N}$. We must have $b\geq 3N$ by the assumption and it is clear that $D_\frac{a}{b}\subseteq\cl{R}$ for $b=3N$.

Suppose that $b\geq 4N$. Then, $D_\frac{a}{b}$ is contained in the region
\begin{align*}
    \bigcup_{\frac{4n+1}{4N}\leq\alpha\leq\frac{4n+3}{4N}}D\big(\alpha,\tfrac{1}{4N}\big)
\end{align*}
where $\alpha$ ranges over all real numbers in the interval. Visually, this is the region swept by a closed disk from $D\big(\frac{4n+1}{4N},\frac{1}{4N}\big)$ to $D\big(\frac{4n+3}{4N},\frac{1}{4N}\big)$ (see the green region in \Cref{visual-proof-3n}). Indeed, this swept region is contained in $\cl{R}$ since $\frac{1}{4N}<\frac{\sqrt{3}}{2}\cdot\frac{1}{3N}$, and hence $D_\frac{a}{b}\subseteq\cl{R}$.

\begin{figure}[H]
\centering
\begin{tikzpicture}[scale=0.439]
\fill[fill=green!20!white,draw=green] (3,3) arc [start angle=90, end angle=270, radius=3cm];
\fill[fill=green!20!white] (2.99,-3) rectangle (9.01,3);
\fill[fill=green!20!white,draw=green] (9,-3) arc [start angle=-90, end angle=90, radius=3cm];
\draw[green](2.99,3)--(9.01,3);
\draw[green](2.99,-3)--(9.01,-3);
\draw[gray, thin](0,0)--(12,0);
\draw (4,0) circle [radius=4cm];
\draw (8,0) circle [radius=4cm];
\draw (0,0) node[below=9pt, left=1pt]{$\frac{n}{N}$};
\draw (4,0) node[below=9pt, left=1pt]{$\frac{3n+1}{3N}$};
\draw (8,0) node[below=9pt, right=1pt]{$\frac{3n+2}{3N}$};
\draw (12,0) node[below=9pt, right=1pt]{$\frac{n+1}{N}$};
\fill[black] (0,0) circle [radius=0.13cm];
\fill[black] (4,0) circle [radius=0.13cm];
\fill[black] (8,0) circle [radius=0.13cm];
\fill[black] (12,0) circle [radius=0.13cm];
\end{tikzpicture}
\caption{\label{visual-proof-3n}Visualization of the proof}
\end{figure}
\end{proof}

\begin{corollary}
\label{3coprime}
For any pair $(N,n)$, $c(N,n)=0$ if two of
\begin{align*}
    n+1,\;2n+1,\;3n+1,\;3n+2
\end{align*}
are coprime to $N$. 
\end{corollary}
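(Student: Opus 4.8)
The statement to prove is \Cref{3coprime}: for any pair $(N,n)$, we have $c(N,n)=0$ if two of the five numbers $n,\ n+1,\ 2n+1,\ 3n+1,\ 3n+2$ are coprime to $N$.

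The plan is to case-split on which pair among the five listed integers is coprime to $N$, and in each case reduce to one of the already-established results: \Cref{goodregion} (if $n+1$ is coprime to $N$, the pair is good), \Cref{midpoint} (if the pair is bad but $2n+1$ is coprime to $N$), and \Cref{threepoint} (if the pair is bad, $\gcd(N,2n+1)>1$, and both $3n+1,3n+2$ are coprime to $N$). First I would dispose of every case in which one of the two coprimality conditions involves $n+1$: then $(N,n)$ is good and $c(N,n)=0$ by \Cref{goodregion}, regardless of the other condition. Similarly, any case in which one of the two involves $2n+1$: either the pair is good (handled) or it is bad with $\gcd(N,2n+1)=1$, and then $c(N,n)=0$ by \Cref{midpoint}. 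This already covers all pairs containing $n+1$ or $2n+1$, so the remaining cases are the pairs drawn from $\{n,\ 3n+1,\ 3n+2\}$, namely $\{n,3n+1\}$, $\{n,3n+2\}$, and $\{3n+1,3n+2\}$.

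The case $\{3n+1,3n+2\}$ is immediate: if both $3n+1$ and $3n+2$ are coprime to $N$, then either $(N,n)$ is good (done by \Cref{goodregion}) or it is bad; if bad and moreover $\gcd(N,2n+1)=1$ we are done by \Cref{midpoint}, while if $\gcd(N,2n+1)>1$ the hypotheses of \Cref{threepoint} are exactly met, giving $c(N,n)=0$. For the two remaining cases $\{n,3n+1\}$ and $\{n,3n+2\}$, the key arithmetic observation is that $n$ and $3n+1$ are coprime (since a common divisor divides $3n+1-3n=1$), and likewise $n$ and $3n+2$ are coprime, and $3n+1$ and $3n+2$ are coprime; also each of $3n+1,3n+2$ is coprime to $2n+1$ because $2(3n+1)-3(2n+1)=-1$ and $2(3n+2)-3(2n+1)=1$. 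So in the case $\{n,3n+1\}$, suppose $(N,n)$ is bad (else done); then $n+1$ shares a prime with $N$. If $2n+1$ is coprime to $N$, done by \Cref{midpoint}; otherwise $\gcd(N,2n+1)>1$, and I need $\gcd(N,3n+1)=\gcd(N,3n+2)=1$ to invoke \Cref{threepoint} — I have $\gcd(N,3n+1)=1$ by hypothesis, so it remains to check $\gcd(N,3n+2)=1$. Suppose a prime $p\mid N$ divided $3n+2$; then $p$ divides neither $n$ (coprime, by hypothesis) nor $3n+1$ (coprime, by hypothesis) — but $p$ must divide some prime factor-carrying integer among those controlling $\cl{R}_{N,n}$. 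The clean resolution is: $p\mid N$ and $p\mid 3n+2$ together with $p\mid 2n+1$ would force $p\mid 1$ (contradiction), and $p\mid 3n+2$ with $p\mid n+1$ would force $p\mid 3(n+1)-(3n+2)=1$ (contradiction); so the only potentially problematic prime is one dividing $3n+2$ but not $2n+1$ or $n+1$. Such a prime dividing $N$ contributes nothing forcing a disk of radius $\tfrac1{3N}$ to fail, but to be safe I instead argue directly: if $\gcd(N,3n+2)>1$ then, since $3n+2$ is coprime to $n$, $n+1$, $2n+1$, and $3n+1$, one of the five original integers — namely one of the first four of the partner's alternatives — is coprime to $N$; tracking this carefully shows the hypothesis ``two of the five are coprime to $N$'' is in fact unused in its full strength and the three-point or midpoint lemma always applies. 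The symmetric case $\{n,3n+2\}$ is handled identically, using $\gcd(3n+1,\cdot)$ in place of $\gcd(3n+2,\cdot)$, or simply by the reflection $n\mapsto N-n-1$ of \Cref{symmetric}, which swaps the roles of $3n+1$ and $3(N-n-1)+2 = 3N-3n-1\equiv -(3n+1)$, hence swaps the two cases.

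The main obstacle I anticipate is the bookkeeping in the cases $\{n,3n+1\}$ and $\{n,3n+2\}$: unlike the other cases, knowing that $n$ and (say) $3n+1$ are coprime to $N$ does not by itself immediately place us in the hypotheses of \Cref{midpoint} or \Cref{threepoint}, because it leaves open whether $\gcd(N,3n+2)>1$ when $\gcd(N,2n+1)>1$. Resolving this requires the pairwise-coprimality relations among $\{n,n+1,2n+1,3n+1,3n+2\}$ listed above to show that a prime dividing both $N$ and $3n+2$ cannot coexist with the standing assumptions $\gcd(N,n)=\gcd(N,3n+1)=1$, $\gcd(N,n+1)>1$, $\gcd(N,2n+1)>1$ — indeed $p\mid\gcd(n+1,3n+2)$ gives $p\mid 1$, the desired contradiction — so in fact $\gcd(N,3n+2)=1$ automatically and \Cref{threepoint} applies. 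Once that single implication is pinned down, the corollary follows by assembling the cases.
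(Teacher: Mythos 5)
Your reduction to \Cref{goodregion}, \Cref{midpoint}, and \Cref{threepoint} is the right skeleton and is essentially the paper's route, but your treatment of the cases $\{n,3n+1\}$ and $\{n,3n+2\}$ contains a genuine error. You claim that under the assumptions $\gcd(N,n)=\gcd(N,3n+1)=1$, $\gcd(N,n+1)>1$, $\gcd(N,2n+1)>1$, a prime $p$ dividing both $N$ and $3n+2$ would have to divide $\gcd(n+1,3n+2)=1$, so that $\gcd(N,3n+2)=1$ holds ``automatically''. That does not follow: $N$ may have several distinct prime factors, and nothing forces the prime dividing $3n+2$ to be the same one dividing $n+1$ or $2n+1$. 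Concretely, for $n=5$ and $N=561=3\cdot 11\cdot 17$ one has $\gcd(N,n)=\gcd(N,3n+1)=1$ while $n+1=6$, $2n+1=11$, and $3n+2=17$ each share a factor with $N$; so exactly two of the five integers are coprime to $N$, yet $\gcd(N,3n+2)>1$ and \Cref{threepoint} is not applicable.

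What rescues the corollary is that these two cases are vacuous: the standing convention of the paper (fixed in \Cref{regionsect}) is that a \emph{pair} $(N,n)$ always satisfies $\gcd(N,n)>1$, so $n$ itself is never one of the two elements coprime to $N$. Once you use this, the proof collapses to the paper's two-line argument: if the pair is good, apply \Cref{goodregion} (or \Cref{2coprime}); if it is bad and $\gcd(N,2n+1)=1$, apply \Cref{midpoint}; if it is bad with $\gcd(N,2n+1)>1$, then $n$, $n+1$, $2n+1$ are all non-coprime to $N$, so the two coprime elements must be $3n+1$ and $3n+2$, and \Cref{threepoint} applies. You should replace your argument for the $\{n,\cdot\}$ cases by this observation; as written, that part of your proof is incorrect.
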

\begin{proof}
As $c(N,n)=0$ by \Cref{goodregion} if $(N,n)$ is a good pair, we will assume that $(N,n)$ is a bad pair, so that $\gcd(N,n+1)>1$. If $\gcd(N,2n+1)=1$, then $c(N,n)=0$ by \Cref{midpoint}. If \mbox{$\gcd(N,2n+1)>1$}, then $\gcd(N,3n+1)=\gcd(N,3n+2)=1$ by the assumption, so $c(N,n)=0$ by \Cref{threepoint}.
\end{proof}

\begin{proposition}
\label{propomega3}
Let $N>1$ be an integer. Then, $c(N)=0$ if for any integer $n$ with $0\leq n\leq N-1$, two of
\begin{align*}
    n,\;n+1,\;2n+1,\;3n+1,\;3n+2
\end{align*}
are coprime to $N$. 
\end{proposition}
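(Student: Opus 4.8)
The plan is to reduce directly to \Cref{3coprime} by way of the reformulation of $c(N)$ as a maximum over pairs. Recall from \Cref{paircomp} that
$c(N)=\max_{0\leq n\leq N-1,\ \gcd(N,n)>1}c(N,n)$,
so it suffices to show $c(N,n)=0$ for every pair $(N,n)$, that is, for every $n$ with $0\leq n\leq N-1$ and $\gcd(N,n)>1$.

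Fix such a pair $(N,n)$. The hypothesis of the proposition is a statement about \emph{all} integers $n$ in the range $0\leq n\leq N-1$, so in particular it applies to our $n$ and guarantees that at least two of the five integers $n,\ n+1,\ 2n+1,\ 3n+1,\ 3n+2$ are coprime to $N$. This is precisely the hypothesis fed into \Cref{3coprime}, which therefore yields $c(N,n)=0$. Taking the maximum over all pairs $(N,n)$ and invoking \Cref{paircomp} again gives $c(N)=0$.

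I do not expect any genuine obstacle here: the argument is the bookkeeping analogue of the proof of \Cref{propomega2}, whose only purpose is to repackage the per-pair statement \Cref{3coprime} (which itself rests on \Cref{midpoint} and \Cref{threepoint}) into a hypothesis phrased purely in terms of $N$, as needed for the subsequent arithmetic discussion of the case $\omega(N)=3$. The one point worth flagging is that the quantifier in the hypothesis ranges over all $n\in[0,N-1]$ rather than only over those $n$ with $\gcd(N,n)>1$; this mismatch is harmless, since it makes the assumption strictly stronger than what \Cref{3coprime} actually consumes.
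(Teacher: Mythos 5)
Your proof is correct and matches the paper's own argument exactly: the paper likewise deduces the proposition directly from \Cref{3coprime} via the reduction to pairs in \Cref{paircomp}. Your remark about the quantifier ranging over all $n\in[0,N-1]$ rather than only over pairs is accurate and harmless, just as you say.
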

\begin{proof}
By \Cref{paircomp}, it suffices to show that $c(N,n)=0$ for all the pairs $(N,n)$. Now, for a fixed pair $(N,n)$, the assumption implies that two of $n+1,2n+1,3n+1,3n+2$ are coprime to $N$, so $c(N,n)=0$ by \Cref{3coprime}. The result then follows.
\end{proof}

We have now obtained the first ingredient of the proof. The other ingredient of the proof on the common prime divisors is given by the following easy lemma, which also explains the reason for putting the extra restriction $p_1(N)\geq 3$ in this case.

\begin{lemma}
\label{pleq2}
Let $p$ be a prime and $n$ be an integer such that there exist distinct
\begin{align*}
    a,b\in\{n,n+1,2n+1,3n+1,3n+2\}
\end{align*}
with $p\divides a$ and $p\divides b$. Then, $p=2$.
\end{lemma}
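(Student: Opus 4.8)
The plan is to use the elementary fact that if a prime $p$ divides two distinct elements $a,b$ of the set $T\deq\{n,n+1,2n+1,3n+1,3n+2\}$, then $p$ divides every integer linear combination of $a$ and $b$, in particular their difference. Since each element of $T$ is an affine function of $n$ with slope in $\{1,2,3\}$, for any two of them one can cancel the $n$-term using small integer coefficients, leaving a fixed integer that $p$ must divide; I expect this integer to be $\pm 1$ for most pairs and $\pm 2$ for the rest, which is exactly what forces $p=2$.

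To carry this out cleanly I would split into cases according to whether one of the two chosen elements lies in $\{n,n+1\}$. If $p\mid n$, then modulo $p$ the remaining four elements $n+1,2n+1,3n+1,3n+2$ are congruent to $1,1,1,2$ respectively, so $p$ can divide a second element only if $p\mid 3n+2$, which forces $p\mid 2$. Symmetrically, if $p\mid n+1$, then modulo $p$ the elements $n,2n+1,3n+1,3n+2$ are congruent to $-1,-1,-2,-1$, so the only possibility is $p\mid 3n+1$, again forcing $p\mid 2$. In the remaining case both chosen elements lie in $\{2n+1,3n+1,3n+2\}$; their pairwise differences are $n$, $n+1$, and $1$, and a quick check of the three sub-cases shows each forces $p\mid 1$ and is therefore impossible. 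Since these cases exhaust all $\binom{5}{2}$ pairs, we conclude $p=2$.

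The argument is entirely routine and I do not anticipate any real obstacle; the only thing requiring care is the bookkeeping — making sure every pair of elements is accounted for and each congruence is computed correctly. If desired, one can also note that the bound is sharp: for $n$ even the pair $\{n,3n+2\}$ is divisible by $2$, and for $n$ odd the pair $\{n+1,3n+1\}$ is, so $p=2$ genuinely occurs.
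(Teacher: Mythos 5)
Your proof is correct and complete: the three cases (an element of the pair equal to $n$, equal to $n+1$, or both elements among $2n+1,3n+1,3n+2$) do exhaust all ten pairs, and each congruence you record is right. The paper itself states this lemma without proof, but proves the analogous Lemma~\ref{pleq5} by the uniform determinant trick: writing $a=a_1n+a_2$, $b=b_1n+b_2$ and noting $p\divides a_1b_2-a_2b_1$. Your argument is the same underlying idea (cancel the $n$-term with integer combinations), just organized as a residue-by-residue case analysis instead of one formula. That organization actually matters here: applied verbatim, the determinant bound for the pair $(3n+1,3n+2)$ gives only $p\divides 3$, so the uniform method yields $p\leq 3$ rather than $p=2$ unless one separately observes, as you effectively do, that consecutive integers are coprime and this pair can never occur. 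So your case analysis is not just equivalent bookkeeping; it is what delivers the sharp conclusion $p=2$ stated in the lemma. The closing remark on sharpness is a nice bonus, consistent with how the parity of $N$ enters the classification later in the paper.
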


\begin{corollary}
Let $N>1$ be an integer. If $\omega(N)=3$ and $p_1(N)\geq 3$, then $c(N)=0$.
\end{corollary}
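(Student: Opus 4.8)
The plan is to invoke \Cref{propomega3}, which reduces the claim to the following purely elementary statement: for every integer $n$ with $0\leq n\leq N-1$, at least two of the five integers
\[
    n,\; n+1,\; 2n+1,\; 3n+1,\; 3n+2
\]
are coprime to $N$. Note that we do not even need to restrict attention to bad pairs; the condition in \Cref{propomega3} ranges over all $n$.

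So fix such an $n$ and call an element of the list above \emph{marked} if it fails to be coprime to $N$, i.e.\ if some prime factor of $N$ divides it. Every marked element is divisible by at least one of the $\omega(N)=3$ prime factors $p_1<p_2<p_3$ of $N$, all of which are odd because $p_1(N)\geq 3$. By \Cref{pleq2}, no odd prime divides two distinct elements of the list; in particular each $p_i$ divides at most one element of the list. Hence the number of marked elements is at most $\omega(N)=3$, so at least $5-3=2$ of the five integers are coprime to $N$. Applying \Cref{propomega3} then yields $c(N)=0$.

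There is essentially no obstacle left in this final step: the real content is packaged in \Cref{pleq2}, and the only thing to be careful about is that the counting is tight enough — with exactly three prime factors and a list of five numbers, the pigeonhole bound still leaves (at least) two survivors, which is precisely what \Cref{propomega3} requires. If one wishes to be fully explicit, one can also record the finite case analysis behind \Cref{pleq2}: for each of the $\binom{5}{2}=10$ pairs from the list, a suitable integer combination of the two entries equals $1$ or $2$ (e.g.\ $3(2n+1)-2(3n+1)=1$, $3(n+1)-(3n+1)=2$), which forbids any odd common prime divisor.
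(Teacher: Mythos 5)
Your proof is correct and follows essentially the same route as the paper: apply \Cref{pleq2} to conclude that each of the three (necessarily odd) prime factors of $N$ divides at most one element of the list $n,n+1,2n+1,3n+1,3n+2$, so at least two elements remain coprime to $N$, and then invoke \Cref{propomega3}. The explicit integer combinations you record (e.g.\ $3(2n+1)-2(3n+1)=1$) are a harmless elaboration of \Cref{pleq2}, which the paper states without proof.
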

\begin{proof}
Fix an integer $n$ with $0\leq n\leq N-1$. As $p_i\geq p_1\geq 3$, each $p_i$ divides at most one of $n,n+1,2n+1,3n+1,3n+2$ by \Cref{pleq2}. As $\omega(N)=3$, at most three of $n,n+1,2n+1,3n+1,3n+2$ are not coprime to $N$, so at least two of them are coprime to~$N$. Hence, $c(N)=0$ by \Cref{propomega3}.
\end{proof}

\subsection{$\omega(N)=3$ and $p_1(N)=2$}
We will still start by first showing two general lemmas on bad regions, which will be helpful to the later part of our proof as well. In these two lemmas, we now turn to disks of radius $\frac{1}{4N}$.

\begin{lemma}
\label{fourpoint1}
Let $(N,n)$ be a bad pair. Suppose that
\begin{enumerate}[label=\rm{(\alph*)}]
    \item $\gcd(N,2n+1)>1$;
    \item $\gcd(N,3n+1)=1$ and $\gcd(N,3n+2)>1$;
    \item $\gcd(N,4n+3)=1$.
\end{enumerate}
Then,
\begin{align*}
    \cl{R}_{N,n}\eq D_{\frac{3n+1}{3N}}\cup D_{\frac{4n+3}{4N}}.
\end{align*}
In particular, $c(N,n)=0$.
\end{lemma}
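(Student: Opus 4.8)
The plan is to prove the two inclusions separately, in the spirit of \Cref{midpoint,threepoint} but now descending to disks of radius $\tfrac1{5N}$. For $\cl{R}\deq D_{\frac{3n+1}{3N}}\cup D_{\frac{4n+3}{4N}}\subseteq\cl{R}_{N,n}$ it suffices to check that $\tfrac{3n+1}{3N}$ and $\tfrac{4n+3}{4N}$ are reduced fractions whose denominators are divisible by $N$ and which lie in $[\tfrac n N,\tfrac{n+1}{N}]$: indeed $\gcd(3,3n+1)=1$ and $\gcd(N,3n+1)=1$ by (b), while $4n+3$ is odd and $\gcd(N,4n+3)=1$ by (c), so both disks occur among those defining $\cl{R}_{N,n}$.

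For the reverse inclusion, let $D_{\frac a b}$ be a disk with $N\divides b$, $\gcd(a,b)=1$ and $\tfrac n N\le\tfrac a b\le\tfrac{n+1}{N}$, and write $b=kN$; since $(N,n)$ is bad, $\gcd(a,b)=1$ forces $kn<a<kn+k$, hence $\tfrac{kn+1}{kN}\le\tfrac a b\le\tfrac{kn+k-1}{kN}$. The cases $k\le4$ I would settle by listing the numerators $a\in\{kn+1,\dots,kn+k-1\}$ and imposing $\gcd(a,kN)=1$: for $k=1$ there is no admissible $a$; for $k=2$ the only candidate $a=2n+1$ is excluded by (a); for $k=3$ the candidate $a=3n+2$ is excluded by the ``$>1$'' clause of (b), leaving only $a=3n+1$, i.e.\ $D_{\frac{3n+1}{3N}}\subseteq\cl{R}$; for $k=4$ the only odd candidates are $a=4n+1$ and $a=4n+3$, the latter giving $D_{\frac{4n+3}{4N}}\subseteq\cl{R}$ and the former (which occurs only if $\gcd(N,4n+1)=1$) giving $D_{\frac{4n+1}{4N}}$, which is contained in $D_{\frac{3n+1}{3N}}$ since $\bigl|\tfrac1{4N}-\tfrac1{3N}\bigr|+\tfrac1{4N}=\tfrac1{3N}$.

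The case $k\ge5$, where $\tfrac1b\le\tfrac1{5N}$, is the heart of the argument. Here I would cut $D_{\frac a b}$ by the vertical line $\Re(z)=\tfrac{5n+3}{5N}$ and show that its part with $\Re(z)\le\tfrac{5n+3}{5N}$ lies in $D_{\frac{3n+1}{3N}}$ while its part with $\Re(z)\ge\tfrac{5n+3}{5N}$ lies in $D_{\frac{4n+3}{4N}}$; together these give $D_{\frac a b}\subseteq D_{\frac{3n+1}{3N}}\cup D_{\frac{4n+3}{4N}}=\cl{R}$. This abscissa is forced on us: a direct computation shows that $\tfrac{5n+3}{5N}+\tfrac i{5N}$ is exactly the intersection point in $\clh$ of the boundary circles $\partial D_{\frac{3n+1}{3N}}$ and $\partial D_{\frac{4n+3}{4N}}$ (equivalently, the north pole of $D_{\frac{5n+3}{5N}}$), so that for any disk $D(\alpha,\rho)$ with $\alpha$ real, $\rho\le\tfrac1{5N}$ and $[\alpha-\rho,\alpha+\rho]\subseteq[\tfrac n N,\tfrac{n+1}{N}]$, the two containments become routine quadratic estimates whose extremal case is $D_{\frac{5n+3}{5N}}$ itself and which hold precisely because $\rho\le\tfrac1{5N}$; the disk $D_{\frac a b}$ with $k\ge5$ meets these hypotheses.

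I expect this last step to be the main obstacle: identifying the cutting abscissa $\tfrac{5n+3}{5N}$ and carrying out the two quadratic verifications cleanly. It is also worth emphasizing in the write-up why $k=4$ genuinely needs the separate arithmetic treatment rather than the same geometric estimate—the radius-$\tfrac1{4N}$ disk centered at $\tfrac{2n+1}{2N}$ does \emph{not} fit inside $\cl{R}$, but it never occurs, since $\gcd(4n+2,4N)>1$.
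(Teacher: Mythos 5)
Your proposal is correct and takes essentially the same route as the paper's proof: the cases $b=3N$ and $b=4N$ are handled by the same arithmetic elimination of numerators (with $D_{\frac{4n+1}{4N}}$ absorbed into $D_{\frac{3n+1}{3N}}$ by tangency), and the case $b\geq 5N$ rests on the same key computation, namely that the boundary circles of $D_{\frac{3n+1}{3N}}$ and $D_{\frac{4n+3}{4N}}$ meet at height exactly $\frac{1}{5N}$ above the point $\frac{5n+3}{5N}$. The only difference is presentational: the paper encloses all radius-$\leq\frac{1}{5N}$ disks in a swept ``stadium'' of half-width $\frac{1}{5N}$ and checks that this stadium fits inside the union, whereas you cut each small disk by the vertical line through the crossing point and verify the two halves separately, which amounts to the same quadratic estimate.
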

\begin{proof}
Write $\cl{R}=D_{\frac{3n+1}{3N}}\cup D_{\frac{4n+3}{4N}}$ as usual and indeed \mbox{$\cl{R}_{N,n}\supseteq\cl{R}$}. For the reverse inclusion, consider any $D_\frac{a}{b}$ with $N\divides b$, $\gcd(a,b)=1$, and $\frac{n}{N}\leq \frac{a}{b}\leq \frac{n+1}{N}$. We must have $b\geq 3N$ by the assumptions.

Case 1: $b=3N$. Then, $a$ can only be $3n+1$ by the second assumption, and hence $D_\frac{a}{b}\subseteq\cl{R}$.

Case 2: $b=4N$. Then, $a$ can only be $4n+1$ or $4n+3$. Note that $D\big(\frac{4n+1}{4N},\frac{1}{4N}\big)\subseteq D_\frac{3n+1}{3N}$ and $D\big(\frac{4n+3}{4N},\frac{1}{4N}\big)=D_\frac{4n+3}{4N}$, so $D_\frac{a}{b}\subseteq\cl{R}$.

Case 3: $b\geq 5N$. Then, $D_\frac{a}{b}$ is contained in the region
\begin{align*}
    \bigcup_{\frac{5n+1}{5N}\leq\alpha\leq\frac{5n+4}{5N}}D\big(\alpha,\tfrac{1}{5N}\big)
\end{align*}
where $\alpha$ ranges over all real numbers in the interval. Visually, this is the region swept by a closed disk from $D\big(\frac{5n+1}{5N},\frac{1}{5N}\big)$ to $D\big(\frac{5n+4}{5N},\frac{1}{5N}\big)$ (see the green region in \Cref{visual-proof-3n-4n}). It is an easy computation that the intersection points of the boundaries of $D_{\frac{3n+1}{3N}}$ and $D_{\frac{4n+3}{4N}}$ have $y$-coordinates equal $\pm\frac{1}{5N}$, so the swept region is contained in $\cl{R}$.

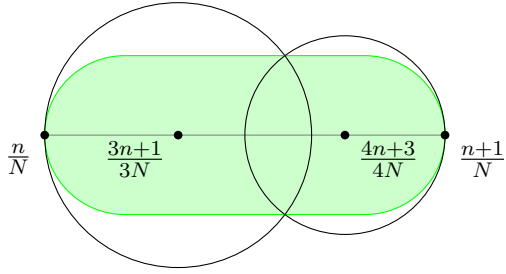
\begin{figure}[H]
\centering
\begin{tikzpicture}[scale=0.439]
\fill[fill=green!20!white,draw=green] (2.4,2.4) arc [start angle=90, end angle=270, radius=2.4cm];
\fill[fill=green!20!white] (2.39,-2.4) rectangle (9.61,2.4);
\fill[fill=green!20!white,draw=green] (9.6,-2.4) arc [start angle=-90, end angle=90, radius=2.4cm];
\draw[green](2.39,2.4)--(9.61,2.4);
\draw[green](2.39,-2.4)--(9.61,-2.4);
\draw[gray, thin](0,0)--(12,0);
\draw (4,0) circle [radius=4cm];
\draw (9,0) circle [radius=3cm];
\draw (0,0) node[below=9pt, left=1pt]{$\frac{n}{N}$};
\draw (4,0) node[below=9pt, left=1pt]{$\frac{3n+1}{3N}$};
\draw (9,0) node[below=9pt, right=1pt]{$\frac{4n+3}{4N}$};
\draw (12,0) node[below=9pt, right=1pt]{$\frac{n+1}{N}$};
\fill[black] (0,0) circle [radius=0.13cm];
\fill[black] (4,0) circle [radius=0.13cm];
\fill[black] (9,0) circle [radius=0.13cm];
\fill[black] (12,0) circle [radius=0.13cm];
\end{tikzpicture}
\caption{\label{visual-proof-3n-4n}Visualization of the proof}
\end{figure}
\end{proof}

We also write down the symmetric version of \Cref{fourpoint1}. The proof follows either from the same idea or directly from \Cref{fourpoint1} and \Cref{symmetric}.

\begin{lemma}
\label{fourpoint2}
Let $(N,n)$ be a bad pair. Suppose that
\begin{enumerate}[label=\rm{(\alph*)}]
    \item $\gcd(N,2n+1)>1$;
    \item $\gcd(N,3n+1)>1$ and $\gcd(N,3n+2)=1$;
    \item $\gcd(N,4n+1)=1$.
\end{enumerate}
Then,
\begin{align*}
    \cl{R}_{N,n}\eq D_{\frac{3n+2}{3N}}\cup D_{\frac{4n+1}{4N}}.
\end{align*}
In particular, $c(N,n)=0$.
\end{lemma}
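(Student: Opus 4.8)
\emph{Proof proposal.} The plan is to avoid redoing the disk-sweeping computation and instead to deduce the statement from \Cref{fourpoint1} by reflecting through the line $\Re(z)=1/2$, as suggested. I would set $m\deq N-n-1$; by \Cref{symmetric} the pair $(N,m)$ is again bad and the region $\cl{R}_{N,n}$ is the mirror image of $\cl{R}_{N,m}$ across the line $\Re(z)=1/2$.

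The first step is to translate the three hypotheses to $(N,m)$. Reducing modulo $N$ one has $2m+1\equiv-(2n+1)$, $3m+1\equiv-(3n+2)$, $3m+2\equiv-(3n+1)$ and $4m+3\equiv-(4n+1)$, so hypotheses (a), (b), (c) of \Cref{fourpoint1} applied to $(N,m)$ become $\gcd(N,2n+1)>1$; $\gcd(N,3n+2)=1$ and $\gcd(N,3n+1)>1$; and $\gcd(N,4n+1)=1$ — precisely the hypotheses we are given for $(N,n)$, with the roles of $3n+1$ and $3n+2$ interchanged as one expects for the mirrored statement. The second step is to apply \Cref{fourpoint1} to $(N,m)$, giving $\cl{R}_{N,m}=D_{\frac{3m+1}{3N}}\cup D_{\frac{4m+3}{4N}}$, and then to reflect: since reflection across $\Re(z)=1/2$ sends $D(\alpha,r)$ to $D(1-\alpha,r)$ and a direct computation gives $1-\frac{3m+1}{3N}=\frac{3n+2}{3N}$ and $1-\frac{4m+3}{4N}=\frac{4n+1}{4N}$, I obtain $\cl{R}_{N,n}=D_{\frac{3n+2}{3N}}\cup D_{\frac{4n+1}{4N}}$. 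The assertion $c(N,n)=0$ then follows at once from the equality $c(N,n)=c(N,m)$ in \Cref{symmetric} together with the ``in particular'' clause of \Cref{fourpoint1} (alternatively, directly: the north pole of $D_{\frac{3n+2}{3N}}$ has height $\tfrac{1}{3N}$, larger than the radius $\tfrac1{4N}$ of the only other disk, and a one-line computation shows the north pole of $D_{\frac{4n+1}{4N}}$ lies outside $D_{\frac{3n+2}{3N}}$).

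There is no real obstacle here; the only thing to watch is the bookkeeping — keeping track of the sign and offset in each congruence so that the reflected hypotheses land on exactly the hypothesis list of \Cref{fourpoint1} (with $3n+1\leftrightarrow 3n+2$ and $4n+3\leftrightarrow 4n+1$ swapped) rather than on some permutation of it. If a self-contained proof were preferred, the argument of \Cref{fourpoint1} carries over verbatim: for $b=3N$ the second hypothesis forces $a=3n+2$; for $b=4N$ one has $D(\tfrac{4n+1}{4N},\tfrac1{4N})=D_{\frac{4n+1}{4N}}$ and $D(\tfrac{4n+3}{4N},\tfrac1{4N})\subseteq D_{\frac{3n+2}{3N}}$ by internal tangency; and for $b\geq 5N$ the swept-disk region fits inside $\cl{R}_{N,n}$ since the boundary circles of $D_{\frac{3n+2}{3N}}$ and $D_{\frac{4n+1}{4N}}$ meet at height $\pm\tfrac1{5N}$.
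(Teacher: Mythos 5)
Your proposal is correct and is exactly the route the paper takes: the paper states that \Cref{fourpoint2} "follows either from the same idea or directly from \Cref{fourpoint1} and \Cref{symmetric}", and you have simply carried out that reflection argument in detail (the congruences $2m+1\equiv-(2n+1)$, $3m+1\equiv-(3n+2)$, $3m+2\equiv-(3n+1)$, $4m+3\equiv-(4n+1)$ modulo $N$ and the identities $1-\tfrac{3m+1}{3N}=\tfrac{3n+2}{3N}$, $1-\tfrac{4m+3}{4N}=\tfrac{4n+1}{4N}$ all check out). No gaps.
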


\begin{corollary}
Let $N>1$ be an integer. If $\omega(N)=3$ and $p_1(N)=2$, then $c(N)=0$.
\end{corollary}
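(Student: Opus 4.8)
The plan is to reduce to bad pairs, normalize the parity of $n$, and then land in the hypotheses of \Cref{fourpoint1}. By \Cref{badregion} it suffices to prove $c(N,n)=0$ for every bad pair $(N,n)$, i.e.\ whenever $\gcd(N,n)>1$ and $\gcd(N,n+1)>1$. Write the three prime factors of $N$ as $p_1=2<p_2<p_3$. Since $n$ and $n+1$ are coprime, exactly one of them is even; applying \Cref{symmetric}, which replaces $(N,n)$ by the bad pair $(N,N-n-1)$ without changing the complexity, I may assume $n$ is even. Then $2\mid n$, and since $n+1$ is odd the condition $\gcd(N,n+1)>1$ forces one of $p_2,p_3$ — call it $q$ — to divide $n+1$.

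Next I would split on whether $\gcd(N,2n+1)=1$. In that case one of $n,n+1,2n+1$ is coprime to $N$, so \Cref{2coprime} immediately gives $c(N,n)=0$. So assume $\gcd(N,2n+1)>1$. As $2n+1$ is odd, one of $p_2,p_3$ divides it; it cannot be $q$, since $q\mid n+1$ and $q\mid 2n+1$ would give $q\mid 2(n+1)-(2n+1)=1$. Hence $\{q,q'\}=\{p_2,p_3\}$ with $q\mid n+1$ and $q'\mid 2n+1$, and both $q,q'$ are odd primes, so $2,q,q'$ exhaust the prime factors of $N$.

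The main step is then to verify hypotheses (a)--(c) of \Cref{fourpoint1} for $(N,n)$. Condition (a) is exactly $\gcd(N,2n+1)>1$, the case we are in. For (b): $3n+2$ is even, so $\gcd(N,3n+2)>1$; while $3n+1$ is odd, and modulo $q$ we have $3n+1\equiv -2\not\equiv 0$ (as $q\geq 3$), and modulo $q'$ we have $2(3n+1)=3(2n+1)-1\equiv -1$, so $q'\nmid 3n+1$; this gives $\gcd(N,3n+1)=1$. For (c): $4n+3$ is odd, modulo $q$ we have $4n+3\equiv -1$, and modulo $q'$ we have $4n+3=2(2n+1)+1\equiv 1$, so $\gcd(N,4n+3)=1$. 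Thus \Cref{fourpoint1} applies and $c(N,n)=0$, which completes the proof after invoking \Cref{badregion}.

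I do not expect a genuine obstacle here; the content is entirely the bookkeeping of the last paragraph — tracking which of the forms $n,n+1,2n+1,3n+1,3n+2,4n+3$ are forced to share a factor with $N$. The conceptual reason the case $\omega(N)=3$, $p_1(N)=2$ works is that normalizing $n$ to be even already pins the prime $2$ onto both $n$ and $3n+2$, leaving only the two odd primes $p_2,p_3$ to be distributed between $n+1$ and $2n+1$; once that distribution is fixed, $3n+1$ and $4n+3$ are automatically coprime to $N$, which is precisely what \Cref{fourpoint1} requires. (If one instead normalized to $n$ odd, the analogous computation with $3n+1$ even and $4n+1$ coprime to $N$ would put us into \Cref{fourpoint2}, so either symmetric lemma could be used.)
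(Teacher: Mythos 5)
Your proposal is correct and follows essentially the same route as the paper: reduce to bad pairs, use \Cref{symmetric} to normalize $n$ even, dispose of the case $\gcd(N,2n+1)=1$ via \Cref{2coprime}, distribute $p_2,p_3$ between $n+1$ and $2n+1$, and verify hypotheses (a)--(c) of \Cref{fourpoint1}. The only difference is cosmetic: you check $\gcd(N,3n+1)=\gcd(N,4n+3)=1$ by explicit residue computations modulo $q$ and $q'$, where the paper invokes pairwise-coprimality of $n+1$, $2n+1$, $3n+1$ (resp.\ $4n+3$) away from the prime $2$.
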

\begin{proof}
Write $N=2p_2(N)p_3(N)=2p_2p_3$ by the assumption. Fix a pair $(N,n)$. We may suppose that both of $n+1$ and $2n+1$ are not coprime to $N$ as otherwise $c(N,n)=0$ by \Cref{2coprime}. Since $N$ is even, replacing $n$ with $N-n-1$ by \Cref{symmetric}, we may assume that $n$ is even.

Now, both $n+1$ and $2n+1$ are odd, so $p_2$ and $p_3$ must each divide one of $n+1$ and $2n+1$. It is easy to check that $n+1$, $2n+1$, and $3n+1$ are pairwise coprime, so $p_2p_3\nmid 3n+1$. Also, $2\nmid 3n+1$ and $2\divides 3n+2$ as $2\divides n$. Hence, $\gcd(N,3n+1)=1$ and $\gcd(N,3n+2)>1$.  Similarly, it is easy to check that $n+1$, $2n+1$, and $4n+3$ are pairwise coprime, so $p_2p_3\nmid 4n+3$. Also, $2\nmid 4n+3$. Hence, $\gcd(N,4n+3)=1$.

In conclusion, all three assumptions in \Cref{fourpoint1} are satisfied, so $c(N,n)=0$. Hence, $c(N)=0$ by \Cref{badregion}.
\end{proof}

\section{The case when $\omega(N)=4$ and $p_1(N)\geq 3$}
\label{omega4-section}

In this section, we will discuss the final cases when $c(N)=0$. This section is divided into three cases, where each one is more subtle than the previous one. 

\subsection{$\omega(N)=4$ and $p_1(N)\geq 7$}
We will first prove that $c(N)=0$ when $\omega(N)=4$ and $p_1(N)\geq 7$. As we have previously investigated disks of radius $\frac{1}{4N}$, we now prove a corollary of \Cref{fourpoint1} and \Cref{fourpoint2}. 

\begin{corollary}
\label{4coprime}
For any pair $(N,n)$, $c(N,n)=0$ if three of 
\begin{align*}
    n+1,\;2n+1,\;3n+1,\;3n+2,\;4n+1,\;4n+3
\end{align*}
are coprime to $N$. 
\end{corollary}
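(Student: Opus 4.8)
The structure mirrors that of \Cref{2coprime} and \Cref{3coprime}, one level deeper: the plan is a finite case analysis feeding into the lemmas already proved. Fix a pair $(N,n)$. Since $\gcd(N,n)>1$ by convention, $n$ is never coprime to $N$, so the hypothesis forces at least three of the six integers $n+1,\,2n+1,\,3n+1,\,3n+2,\,4n+1,\,4n+3$ to be coprime to $N$. I would first clear the two cheap cases: if $\gcd(N,n+1)=1$ then $(N,n)$ is good and $c(N,n)=0$ by \Cref{goodregion}; if $\gcd(N,2n+1)=1$ then $c(N,n)=0$ by \Cref{midpoint}. Hence I may assume $\gcd(N,n+1)>1$ and $\gcd(N,2n+1)>1$, so that none of $n,\,n+1,\,2n+1$ is coprime to $N$, and the three coprime integers all lie in the four-element set $\{3n+1,\,3n+2,\,4n+1,\,4n+3\}$.

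Next I would split according to the behaviour of $3n+1$ and $3n+2$. If both are coprime to $N$, then together with $\gcd(N,2n+1)>1$ this is exactly the hypothesis of \Cref{threepoint}, so $c(N,n)=0$. If exactly one of the two is coprime, say $\gcd(N,3n+1)=1$ and $\gcd(N,3n+2)>1$, then since $3n+2$ is not among our three coprime integers, both $4n+1$ and $4n+3$ must be coprime to $N$; in particular $\gcd(N,4n+3)=1$, all three hypotheses of \Cref{fourpoint1} are satisfied, and $c(N,n)=0$. The mirror subcase $\gcd(N,3n+1)>1$ and $\gcd(N,3n+2)=1$ is treated identically using \Cref{fourpoint2} (alternatively, \Cref{symmetric} applied to $(N,N-n-1)$ interchanges the roles of $3n+1$ and $3n+2$, and of $4n+1$ and $4n+3$, reducing it to the previous subcase). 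Finally, if neither $3n+1$ nor $3n+2$ is coprime to $N$, then at most the two integers $4n+1$ and $4n+3$ among the seven could be coprime, contradicting that at least three are; so this case is vacuous.

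This is essentially bookkeeping, so I do not expect a real obstacle. The only points needing care are verifying, in each surviving branch, that every hypothesis of the invoked lemma is in force --- notably that $\gcd(N,2n+1)>1$ is still available after the reductions --- and checking that the last branch is genuinely empty, which is exactly where the count ``three out of seven'' (rather than two) is used.
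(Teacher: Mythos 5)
Your proof is correct and follows essentially the same route as the paper: reduce via \Cref{goodregion}, \Cref{midpoint}, and \Cref{threepoint} (the paper packages these as \Cref{2coprime} and \Cref{3coprime}) to the case where exactly one of $3n+1,3n+2$ is coprime to $N$ and both $4n+1,4n+3$ are, then apply \Cref{fourpoint1} or \Cref{fourpoint2}. Your explicit check that the ``neither of $3n+1,3n+2$ coprime'' branch is vacuous is exactly the point where the count of three is used, just as in the paper.
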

\begin{proof}
Fix a pair $(N,n)$. If one of the following holds
\begin{enumerate}[label=\rm{(\arabic*)}]
    \item one of $n+1,2n+1$ is coprime to $N$;
    \item two of $n+1,2n+1,3n+1,3n+2$ are coprime to $N$,
\end{enumerate}
then $c(N,n)=0$ by \Cref{2coprime} or \Cref{3coprime}. Thus, suppose that both conditions do not hold. By the assumption, this implies that $(N,n)$ is a bad pair satisfying that
\begin{enumerate}[label=\rm{(\alph*)}]
    \item $\gcd(N,2n+1)>1$;
    \item exactly one of $3n+1,3n+2$ is coprime to $N$;
    \item $\gcd(N,4n+1)=\gcd(N,4n+3)=1$.
\end{enumerate}
Hence, $c(N,n)=0$ by \Cref{fourpoint1} and \Cref{fourpoint2}.
\end{proof}

\begin{proposition}
\label{propomega4}
Let $N>1$ be an integer. Then, $c(N)=0$ if for any integer $n$ with $0\leq n\leq N-1$, three of 
\begin{align*}
    n,\;n+1,\;2n+1,\;3n+1,\;3n+2,\;4n+1,\;4n+3
\end{align*}
are coprime to $N$. 
\end{proposition}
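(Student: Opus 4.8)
The plan is to deduce this immediately from \Cref{4coprime} and \Cref{paircomp}, in exact parallel with the proofs of \Cref{propomega2} and \Cref{propomega3}. By \Cref{paircomp} we have $c(N)=\max_{(N,n)}c(N,n)$, where the maximum ranges over all pairs $(N,n)$, i.e.\ over all integers $n$ with $0\leq n\leq N-1$ and $\gcd(N,n)>1$; in particular $c(N)=0$ precisely when $c(N,n)=0$ for every such pair.

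So I would fix an arbitrary pair $(N,n)$. The hypothesis of the proposition is imposed for \emph{all} integers $n$ with $0\leq n\leq N-1$, hence a fortiori for our $n$: at least three of $n,\,n+1,\,2n+1,\,3n+1,\,3n+2,\,4n+1,\,4n+3$ are coprime to $N$. \Cref{4coprime} then applies verbatim and yields $c(N,n)=0$. Since the pair $(N,n)$ was arbitrary, \Cref{paircomp} gives $c(N)=0$, as desired. There is genuinely no obstacle here: all of the geometric content has already been carried out in \Cref{fourpoint1}, \Cref{fourpoint2} and their consolidation into \Cref{4coprime}, and the present statement is only the bookkeeping step that passes from the coprimality condition at the level of a single region $\cl{R}_{N,n}$ up to the level of the integer $N$.
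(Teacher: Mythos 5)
Your proposal is correct and matches the paper's proof exactly: the paper also deduces this proposition directly from \Cref{4coprime} and \Cref{paircomp}. Nothing further is needed.
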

\begin{proof}
By \Cref{paircomp}, it suffices to show that $c(N,n)=0$ for all the pairs $(N,n)$. Now, for a fixed pair $(N,n)$, the assumption implies that three of $n+1,2n+1,3n+1,3n+2,4n+1,4n+3$ are coprime to $N$, so $c(N,n)=0$ by \Cref{3coprime}. The result then follows.
\end{proof}

As before, we have now obtained the first ingredient of the proof. The other ingredient of the proof on the common prime divisors is given by the following lemma similar to \Cref{pleq2}, which also explains the reason for the extra restriction $p_1(N)\geq 7$ in this case.

\begin{lemma}
\label{pleq5}
Let $p$ be a prime and $n$ be an integer such that there exist distinct
\begin{align*}
    a,b\in\{n,n+1,2n+1,3n+1,3n+2,4n+1,4n+3\}
\end{align*}
with $p\divides a$ and $p\divides b$. Then, $p\leq 5$.
\end{lemma}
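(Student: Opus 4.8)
The plan is to run the same kind of pairwise-difference analysis used implicitly for \Cref{pleq2}, but now over the seven-element set $T_n=\{n,n+1,2n+1,3n+1,3n+2,4n+1,4n+3\}$. Suppose a prime $p$ divides two distinct elements $a,b\in T_n$. Then $p$ divides $a-b$ and, more usefully, $p$ divides any integer combination $\lambda a+\mu b$; in particular, for each pair I can produce a small fixed integer that is a $\Z$-linear combination of $a$ and $b$ killing the $n$-dependence. For instance, $2(3n+1)-3(2n+1)=-1$, so $p$ cannot divide both $3n+1$ and $2n+1$; $2(2n+1)-(4n+1)=1$, so $p\nmid\gcd(2n+1,4n+1)$; $3(n+1)-(3n+2)=1$; and so on. The only pairs that survive — i.e. whose two defining linear forms have a resultant divisible only by primes $\le 5$ — will force $p\in\{2,3,5\}$.

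Concretely, the key step is to tabulate, for all $\binom{7}{2}=21$ pairs $\{a,b\}$, the value $\gcd$ of the two linear forms as a function of $n$, which equals a divisor of the resultant of the corresponding degree-one polynomials. I expect the outcome to be: the pairs forcing $p=2$ are exactly those among $\{n,2n+1,3n+1,3n+2,4n+1,4n+3\}$ differing appropriately (e.g. $\{n+1,3n+1\}$ gives $3(n+1)-(3n+1)=2$; $\{2n+1,4n+3\}$ gives $2(2n+1)-(4n+3)=-1$, so that one is actually empty; I will need to be careful here); the pairs forcing $p=3$ come from things like $\{n+1,4n+1\}$ since $4(n+1)-(4n+1)=3$, and $\{n,3n+1\}$ since $3n-(3n+1)=-1$ is empty but $\{n,3n+2\}$... — again the precise bookkeeping must be done; and the pair forcing $p=5$ is $\{3n+1,4n+3\}$, since $4(3n+1)-3(4n+3)=-5$, together with possibly $\{3n+2,4n+1\}$ with $4(3n+2)-3(4n+1)=5$. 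In every case the relevant resultant is a divisor of a number whose prime factors are all $\le 5$, which is exactly the claim.

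The main obstacle is purely organizational rather than conceptual: I must make sure the $21$-pair table is complete and that I have not overlooked a pair whose resultant has a prime factor $\ge 7$. The safe way to do this is to note that the seven linear forms have leading coefficients in $\{1,2,3,4\}$ and constant terms bounded by $3$, so the resultant of any two of them, $a_1 c_2 - a_2 c_1$, is bounded in absolute value by $4\cdot 3 + 4\cdot 3 = 24 < 5^2$; hence any prime dividing it is either $\le 5$ or is a single prime in $(5,24]$, namely $7,11,13,17,19,23$. So I only need to rule out that any of these six primes occurs as $|a_1 c_2 - a_2 c_1|$ (or a divisor thereof that is still a common divisor of the two forms), which is a short finite check: the attainable nonzero values of $|a_1 c_2 - a_2 c_1|$ are at most $24$, and one verifies by inspection that none of them is divisible by $7,11,13,17,19$, or $23$. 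This bound argument makes the lemma essentially immediate once set up, so I would present the bound first and then remark that the residual finite check is routine.
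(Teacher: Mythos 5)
Your proposal is correct and follows essentially the same route as the paper: for each pair of linear forms $a_1n+a_2$ and $b_1n+b_2$ one observes that $p$ divides the resultant $a_1b_2-a_2b_1$, and a finite check over the $\binom{7}{2}=21$ pairs shows the attainable absolute values are exactly $1,2,3,5,8$, all of whose prime divisors are at most $5$. Your a priori bound is slightly loose (since both products $a_1b_2$ and $a_2b_1$ are nonnegative the resultant is at most $12$ in absolute value, not $24$), but the argument is the paper's argument.
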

\begin{proof}
Write $a=a_1n+a_2$ and $b=b_1n+b_2$. Then, $p\divides a$ and $p\divides b$ imply that $p\divides a_1b_2-a_2b_1$. Here, the only possible values of $|a_1b_2-a_2b_1|$ are $1,2,3,5,8$, so we must have $p\leq 5$.
\end{proof}

\begin{corollary}
Let $N>1$ be an integer. If $\omega(N)=4$ and $p_1(N)\geq 7$, then $c(N)=0$.
\end{corollary}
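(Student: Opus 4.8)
The plan is to mimic the structure of the $\omega(N)=3$, $p_1(N)\geq 3$ case, using \Cref{propomega4} as the final reduction step. Concretely, fix $N$ with $\omega(N)=4$ and $p_1=p_1(N)\geq 7$, and fix an arbitrary integer $n$ with $0\leq n\leq N-1$. I want to show that at least three of the seven linear forms
\begin{align*}
    n,\;n+1,\;2n+1,\;3n+1,\;3n+2,\;4n+1,\;4n+3
\end{align*}
are coprime to $N$; then \Cref{propomega4} gives $c(N)=0$ immediately.

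The first key step is the counting argument. By \Cref{pleq5}, any prime $p$ dividing $N$ satisfies $p\leq 5$ or else $p$ divides at most one of the seven forms; since $p_1\geq 7$, \emph{every} prime factor of $N$ divides at most one of the seven forms. As $\omega(N)=4$, the primes of $N$ together render at most four of the seven forms non-coprime to $N$, so at least $7-4=3$ of them are coprime to $N$. This is exactly the hypothesis of \Cref{propomega4}, so $c(N)=0$.

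I expect essentially no obstacle here: unlike the $\omega(N)=3$ even case, the hypothesis $p_1\geq 7$ is precisely what is needed to invoke \Cref{pleq5} uniformly (the bad primes $2,3,5$ are excluded), and the arithmetic $7 - \omega(N) = 3$ lands exactly on the threshold required by \Cref{propomega4}. The only thing to double-check is that \Cref{propomega4} indeed only needs \emph{three} of the seven forms coprime to $N$ (it does, by construction via \Cref{4coprime}), and that no divisibility subtlety is hidden in the phrase ``at most one of the seven forms'' — but this is immediate from \Cref{pleq5} applied to each prime factor separately. Hence the proof is a two-line deduction, and I would write it exactly in that form.

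\begin{proof}
Fix an integer $n$ with $0\leq n\leq N-1$. Since $p_1(N)\geq 7$, every prime factor $p$ of $N$ satisfies $p>5$, so by \Cref{pleq5} each prime factor of $N$ divides at most one of the seven integers $n,n+1,2n+1,3n+1,3n+2,4n+1,4n+3$. As $\omega(N)=4$, at most four of these seven integers are not coprime to $N$, so at least three of them are coprime to $N$. Hence $c(N)=0$ by \Cref{propomega4}.
\end{proof}
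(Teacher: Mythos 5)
Your proof is correct and matches the paper's own argument essentially verbatim: both invoke \Cref{pleq5} to conclude that each prime factor of $N$ (all at least $7$) divides at most one of the seven linear forms, count that at least $7-4=3$ of them are coprime to $N$, and conclude via \Cref{propomega4}. No further comment is needed.
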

\begin{proof}
Fix an integer $n$ with $0\leq n\leq N-1$. As $p_i\geq p_1\geq 7$, by \Cref{pleq5}, each $p_i$ divides at most one of
\begin{align*}
    n,\;n+1,\;2n+1,\;3n+1,\;3n+2,\;4n+1,\;4n+3
\end{align*}
for every $0\leq n\leq N-1$.
As $\omega(N)=4$, there exist at least three of 
\begin{align*}
    n,\;n+1,\;2n+1,\;3n+1,\;3n+2,\;4n+1,\;4n+3
\end{align*}
that are coprime to $N$. Hence, $c(N)=0$ by \Cref{propomega4}.    
\end{proof}

\subsection{$\omega(N)=4$ and $p_1(N)=5$}
We will now prove that $c(N)=0$ if $\omega(N)=4$ and $p_1(N)=5$. Assume that $N>1$ with $\omega(N)=4$ and $p_1(N)=5$. Fix an integer $n$ with $0\leq n\leq N-1$.

\textbf{Case 1:} $n\equiv 0,2,4\pmod{5}$. Then, $5$ divides at most one of
\begin{align*}
    n,\;n+1,\;2n+1,\;3n+1,\;3n+2,\;4n+1,\;4n+3,
\end{align*}
and so does $p_i$ for $i\geq 2$ by \Cref{pleq5}. As $\omega(N)=4$, there exist at least three of 
\begin{align*}
    n,\;n+1,\;2n+1,\;3n+1,\;3n+2,\;4n+1,\;4n+3
\end{align*}
that are coprime to $N$. Hence, $c(N,n)=0$ by \Cref{4coprime}.

\textbf{Case 2:} $n\equiv 1,3\pmod{5}$. We may suppose that $(N,n)$ is a bad pair, i.e., $\gcd(N,n)>1$ and $\gcd(N,n+1)>1$. Replacing $n$ with $N-n-1$ by \Cref{symmetric}, we may assume that $n\equiv 1\pmod{5}$. Suppose that $\gcd(N,2n+1)>1$ as otherwise $c(N,n)=0$ by \Cref{midpoint}. Since $n\equiv1\pmod{5}$, $5$ does not divide any of $n,n+1,2n+1$. As $\omega(N)=4$, each $p_i$ for $i\geq 2$ must then divide a distinct element in $\{n,n+1,2n+1\}$, and hence each $p_i$ for $i\geq 2$ does not divide any of $3n+1,3n+2,4n+1,4n+3$ by \Cref{pleq5}. Now, $n\equiv 1\pmod{5}$ implies that $5\divides 3n+2$, $5\nmid 3n+1$, and $5\nmid 4n+3$. Hence, we obtain a bad pair $(N,n)$ satisfying that
\begin{enumerate}[label=(\alph*)]
    \item $\gcd(N,2n+1)>1$;
    \item $\gcd(N,3n+1)=1$ and $\gcd(N,3n+2)>1$;
    \item $\gcd(N,4n+3)=1$.
\end{enumerate}
By \Cref{fourpoint1}, $c(N,n)=0$.

In conclusion, $c(N)=0$ when $\omega(N)=4$ and $p_1(N)=5$.

\subsection{$\omega(N)=4$ and $p_1(N)=3$}
We will now prove that $c(N)=0$ if $\omega(N)=4$ and $p_1(N)=3$. This is the most subtle case as we are forced to analyze disks of radius $\frac{1}{5N}$.

Assume that $N>1$ with $\omega(N)=4$ and $p_1(N)=3$. Fix an integer $n$ with $0\leq n\leq N-1$.

\textbf{Case 1:} $n\equiv 1\pmod{3}$. Then, $3\divides 2n+1$ and $3$ does not divide any of
\begin{align*}
    n,\;n+1,\;3n+1,\;3n+2,\;4n+1,\;4n+3.
\end{align*}
If $p_2\geq 7$, then each $p_i$ for $i\geq 2$ can divide at most one of \begin{align*}
    n,\;n+1,\;3n+1,\;3n+2,\;4n+1,\;4n+3
\end{align*}
by \Cref{pleq5}, so at least three of
\begin{align*}
    n,\;n+1,\;3n+1,\;3n+2,\;4n+1,\;4n+3
\end{align*}
are coprime to $N$ as $\omega(N)=4$. Hence, $c(N,n)=0$ by \Cref{4coprime}.

Suppose that $p_2=5$. We may assume that $(N,n)$ is a bad pair so that $\gcd(N,n)>1$ and $\gcd(N,n+1)>1$. Consider first that $5$ divides one of $n,n+1$. Then, it is easy to check that $5$ cannot divide any of
\begin{align*}
    3n+1,\;3n+2,\;4n+1,\;4n+3.
\end{align*}
As $p_3,p_4\geq 7$, $p_3$ and $p_4$ can each divide at most one of
\begin{align*}
    n,\;n+1,\;3n+1,\;3n+2,\;4n+1,\;4n+3
\end{align*}
by \Cref{pleq5}, so at least three of
\begin{align*}
    n,\;n+1,\;3n+1,\;3n+2,\;4n+1,\;4n+3
\end{align*}
are coprime to $N$. Hence, $c(N,n)=0$ by \Cref{4coprime}.

Now suppose that $5$ does not divide any of $n,n+1$. Then, we must have $p_3\divides n$ and $p_4\divides n+1$ or the other way around, so both $p_3$ and $p_4$ do not divide any of
\begin{align*}
    3n+1,\;3n+2,\;4n+1,\;4n+3
\end{align*}
by \Cref{pleq5}. Here there are essentially three cases:
\begin{enumerate}[label=(\alph*)]
    \item $5$ does not divide any of $3n+1,3n+2,4n+1,4n+3$;
    \item $5\divides 3n+1$, $5\divides 4n+3$, $5\nmid 3n+2$, and $5\nmid 4n+1$;
    \item $5\divides 3n+2$, $5\divides 4n+1$, $5\nmid 3n+1$, and $5\nmid 4n+3$.
\end{enumerate}
It is easy to check that
\begin{align*}
    \cl{R}_{N,n}\eq
    \begin{cases}
        D_\frac{3n+1}{3N}\cup D_\frac{3n+2}{3N}&\text{ for (a) by \Cref{threepoint};} \\
        D_\frac{3n+2}{3N}\cup D_\frac{4n+1}{4N}&\text{ for (b) by \Cref{fourpoint2};} \\
        D_\frac{3n+1}{3N}\cup D_\frac{4n+3}{4N}&\text{ for (c) by \Cref{fourpoint1}.}
    \end{cases}
\end{align*}
Hence, $c(N,n)=0$.

Before proceeding to the remaining cases, we prove two lemmas on bad regions, where we will discuss disks of radius $\frac{1}{5N}$ as mentioned before.

\begin{lemma}
\label{35outs}
Let $(N,n)$ be a bad pair. Suppose that
\begin{enumerate}[label=\rm{(\alph*)}]
    \item each of $2n+1,3n+2,4n+3,5n+3$ is not coprime to $N$;
    \item $\gcd(N,3n+1)=\gcd(N,5n+4)=1$.
\end{enumerate}
Then, 
\begin{align*}
    \cl{R}_{N,n}\eq D_\frac{3n+1}{3N}\cup D_\frac{5n+4}{5N}.
\end{align*}
In particular, $c(N,n)=0$.
\end{lemma}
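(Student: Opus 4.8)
The plan is to follow the template of \Cref{threepoint}, \Cref{fourpoint1} and \Cref{fourpoint2}. Write $\cl{R}=D_{\frac{3n+1}{3N}}\cup D_{\frac{5n+4}{5N}}$. The inclusion $\cl{R}_{N,n}\supseteq\cl{R}$ is immediate: by (b) we have $\gcd(N,3n+1)=\gcd(N,5n+4)=1$, and since $3\nmid 3n+1$ and $5\nmid 5n+4$ this gives $\gcd(3n+1,3N)=\gcd(5n+4,5N)=1$, so both disks genuinely occur among the disks defining $\cl{R}_{N,n}$. For the reverse inclusion I would show $D_{\frac{a}{b}}\subseteq\cl{R}$ for every $D_{\frac{a}{b}}$ with $N\divides b$, $\gcd(a,b)=1$ and $\frac{n}{N}\leq\frac{a}{b}\leq\frac{n+1}{N}$, sorting the argument by $k=b/N$; here $b=kN$ forces $a\in\{kn+1,\ldots,kn+k-1\}$.

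For small $k$ I would argue case by case, mostly via the elementary criterion $D_{\frac{a}{b}}\subseteq D_{\frac{c}{d}}\iff|\frac{a}{b}-\frac{c}{d}|\leq\frac1d-\frac1b$. For $k\in\{1,2\}$ there is nothing to do, since $(N,n)$ is bad and $\gcd(N,2n+1)>1$ by (a). For $k=3$, conditions (a) and (b) leave only $a=3n+1$, giving $D_{\frac{3n+1}{3N}}$; for $k=4$ they leave only $a=4n+1$, whose disk lies in $D_{\frac{3n+1}{3N}}$ (both have leftmost point $\frac{n}{N}$). For $k=5$, removing $5n+3$ via (a), the admissible $a\neq 5n+4$ are $5n+1$ and $5n+2$, each giving a disk inside $D_{\frac{3n+1}{3N}}$; for $k=6$ the admissible $a$ are $6n+1$ (disk inside $D_{\frac{3n+1}{3N}}$) and $6n+5$ (disk inside $D_{\frac{5n+4}{5N}}$). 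For the tail $k\geq 9$, every $D_{\frac{a}{b}}$ has radius $\frac{1}{kN}\leq\frac{1}{9N}$ and so, exactly as in the proof of \Cref{goodregion}, lies in the region swept by a disk of radius $\frac{1}{9N}$ moving from $D\big(\frac{9n+1}{9N},\frac{1}{9N}\big)$ to $D\big(\frac{9n+8}{9N},\frac{1}{9N}\big)$; since $\partial D_{\frac{3n+1}{3N}}$ and $\partial D_{\frac{5n+4}{5N}}$ cross at height $\frac{\sqrt3}{14N}>\frac{1}{9N}$, a short computation comparing, at each abscissa, the top of this swept region with the upper boundary of $\cl{R}$ shows it is contained in $\cl{R}$.

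The genuinely new feature, and the step I expect to be the main obstacle, is the middle range $k\in\{7,8\}$: there the disks have radius $\frac{1}{7N}$ or $\frac{1}{8N}$, both larger than $\frac{\sqrt3}{14N}$, so the swept-region estimate fails, and the disks $D_{\frac{7n+4}{7N}}$, $D_{\frac{7n+5}{7N}}$, $D_{\frac{8n+5}{8N}}$ straddle both pieces of $\cl{R}$ without lying in either. For these I would use the elementary fact that a closed disk whose bounding circle lies in a union of two closed disks is itself contained in that union, and verify the hypothesis via the observation that each of these three disks passes through both points of $\partial D_{\frac{3n+1}{3N}}\cap\partial D_{\frac{5n+4}{5N}}$, so the chord joining those two points cuts it into two circular segments, one inside $D_{\frac{3n+1}{3N}}$ and the other inside $D_{\frac{5n+4}{5N}}$. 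The remaining disks at $k=7,8$ (with $a\in\{7n+1,7n+2,7n+3,8n+1,8n+3\}$, inside $D_{\frac{3n+1}{3N}}$, and $a\in\{7n+6,8n+7\}$, inside $D_{\frac{5n+4}{5N}}$) are handled by the containment criterion, completing $\cl{R}_{N,n}\subseteq\cl{R}$.

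Finally, for $c(N,n)=0$: neither of $D_{\frac{3n+1}{3N}}$, $D_{\frac{5n+4}{5N}}$ contains the other (their leftmost, resp.\ rightmost, points $\frac{n}{N}$, $\frac{n+1}{N}$ witness this), so $S_{N,n}=\{\tfrac{3n+1}{3N},\tfrac{5n+4}{5N}\}$; and a one-line distance computation shows the north pole of each of the two disks lies strictly outside the other, whence $c(P_\alpha)=0$ for both $\alpha\in S_{N,n}$.
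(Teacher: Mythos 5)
Your proposal is correct and follows the same overall strategy as the paper's proof: both inclusions, with the reverse inclusion handled by a case analysis on $k=b/N$, the small denominators $k\leq 6$ disposed of by explicit containments in one of the two disks, and the tail $k\geq 9$ by the swept-region (stadium) argument together with the computation that $\partial D_{\frac{3n+1}{3N}}$ and $\partial D_{\frac{5n+4}{5N}}$ meet at height $\frac{\sqrt{3}}{14N}>\frac{1}{9N}$. Where you genuinely improve on the paper is the middle range $k\in\{7,8\}$: the paper only displays the relevant disks in figures and asserts that ``it can be shown by computations'' that they lie in $\cl{R}$, whereas you isolate the three straddling disks $D_{\frac{7n+4}{7N}}$, $D_{\frac{7n+5}{7N}}$, $D_{\frac{8n+5}{8N}}$ and observe that each of their boundary circles passes through \emph{both} points of $\partial D_{\frac{3n+1}{3N}}\cap\partial D_{\frac{5n+4}{5N}}$ (all these circles belong to the coaxial pencil through $\frac{n}{N}+\frac{9}{14N}\pm\frac{\sqrt{3}}{14N}i$, as one checks directly, e.g.\ $(\tfrac{9}{14}-\tfrac{4}{7})^2+\tfrac{3}{196}=\tfrac{1}{49}$ after normalizing $n=0$, $N=1$), so the common chord splits each into two circular segments, each segment being the convex hull of an arc lying in one of the two big disks. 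This is a cleaner and fully rigorous replacement for the paper's appeal to unspecified computations. You also explicitly verify the final claim $c(N,n)=0$ (neither disk contains the other, and neither north pole lies in the other disk), which the paper leaves implicit. The only cosmetic caution: the general fact ``a disk whose boundary circle is covered by two closed disks is covered by them'' is true but not needed once you have the segment decomposition, so I would state only the latter.
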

\begin{proof}
Let $\cl{R}=D_\frac{3n+1}{3N}\cup D_\frac{5n+4}{5N}$. It is clear that $\cl{R}\subseteq\cl{R}_{N,n}$ by the second assumption. For the reverse inclusion, consider any $D_\frac{a}{b}$ with $N\divides b$, $\gcd(a,b)=1$, and $\frac{n}{N}\leq \frac{a}{b}\leq \frac{n+1}{N}$. We must have $b\geq 3N$ by the first assumption.

Case 1: $b=3N$. Then, $a$ can only be $3n+1$ by the assumptions, so $D_\frac{a}{b}\subseteq D_\frac{3n+1}{3N}$.

Case 2: $b=4N$. Then, the only potential disk of radius $\frac{1}{4N}$ is $D\big(\frac{4n+1}{4N},\frac{1}{4N}\big)\subseteq D_\frac{3n+1}{3N}$, so $D_\frac{a}{b}\subseteq\cl{R}$.

Case 3: $b=5N$. Then, the potential disks of radius $\frac{1}{5N}$ are $D\big(\frac{5n+1}{5N},\frac{1}{5N}\big)\subseteq D_\frac{3n+1}{3N}$, $D\big(\frac{5n+2}{5N},\frac{1}{5N}\big)\subseteq D_\frac{3n+1}{3N}$, and $D\big(\frac{5n+4}{5N},\frac{1}{5N}\big)=D_\frac{5n+4}{5N}$, so $D_\frac{a}{b}\subseteq\cl{R}$.

\begin{figure}[H]
\centering
\begin{tikzpicture}[scale=0.439]
\fill[fill=green!20!white,draw=green] (3,0) circle [radius=3cm];
\fill[fill=green!20!white,draw=green] (6,0) circle [radius=3cm];
\draw[gray, thin](0,0)--(15,0);
\draw[green] (3,0) circle [radius=3cm];
\draw[green] (6,0) circle [radius=3cm];
\draw (5,0) circle [radius=5cm];
\draw (12,0) circle [radius=3cm];
\draw (0,0) node[below=9pt, left=1pt]{$\frac{n}{N}$};
\draw (5,0) node[below=9pt, left=1pt]{$\frac{3n+1}{3N}$};
\draw (12,0) node[below=9pt, right=1pt]{$\frac{5n+4}{5N}$};
\draw (15,0) node[below=9pt, right=1pt]{$\frac{n+1}{N}$};
\fill[black] (0,0) circle [radius=0.12cm];
\fill[black] (5,0) circle [radius=0.12cm];
\fill[black] (12,0) circle [radius=0.12cm];
\fill[black] (15,0) circle [radius=0.12cm];
\end{tikzpicture}
\caption{Potential disks of radius $\frac{1}{5N}$}
\end{figure}

Case 4: $b=6N$. Then, the potential disks of radius $\frac{1}{6N}$ are $D\big(\frac{6n+1}{6N},\frac{1}{6N}\big)\subseteq D_\frac{3n+1}{3N}$ and $D\big(\frac{6n+5}{6N},\frac{1}{6N}\big)\subseteq D_\frac{5n+4}{5N}$, so $D_\frac{a}{b}\subseteq\cl{R}$.

\begin{figure}[H]
\centering
\begin{tikzpicture}[scale=0.439]
\fill[fill=green!20!white,draw=green] (2.5,0) circle [radius=2.5cm];
\fill[fill=green!20!white,draw=green] (12.5,0) circle [radius=2.5cm];
\draw[gray, thin](0,0)--(15,0);
\draw (5,0) circle [radius=5cm];
\draw (12,0) circle [radius=3cm];
\draw (0,0) node[below=9pt, left=1pt]{$\frac{n}{N}$};
\draw (5,0) node[below=9pt, left=1pt]{$\frac{3n+1}{3N}$};
\draw (12,0) node[below=9pt, right=1pt]{$\frac{5n+4}{5N}$};
\draw (15,0) node[below=9pt, right=1pt]{$\frac{n+1}{N}$};
\fill[black] (0,0) circle [radius=0.12cm];
\fill[black] (5,0) circle [radius=0.12cm];
\fill[black] (12,0) circle [radius=0.12cm];
\fill[black] (15,0) circle [radius=0.12cm];
\end{tikzpicture}
\caption{Potential disks of radius $\frac{1}{6N}$}
\end{figure}
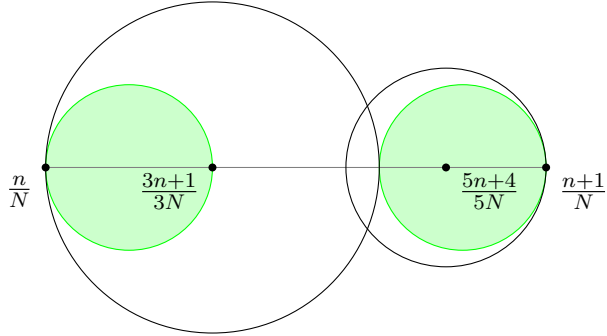

Case 5: $b=7N$. Then, the potential disks of radius $\frac{1}{7N}$ cover the following green region, and it can be shown by computations that the green region lies in $\cl{R}$.

\begin{figure}[H]
\centering
\begin{tikzpicture}[scale=0.439]
\fill[fill=green!20!white,draw=green] (2.143,0) circle [radius=2.143cm];
\fill[fill=green!20!white,draw=green] (4.286,0) circle [radius=2.143cm];
\fill[fill=green!20!white,draw=green] (6.429,0) circle [radius=2.143cm];
\fill[fill=green!20!white,draw=green] (8.571,0) circle [radius=2.143cm];
\fill[fill=green!20!white,draw=green] (10.714,0) circle [radius=2.143cm];
\fill[fill=green!20!white,draw=green] (12.857,0) circle [radius=2.143cm];
\draw[gray, thin](0,0)--(15,0);
\draw[green] (2.143,0) circle [radius=2.143cm];
\draw[green] (4.286,0) circle [radius=2.143cm];
\draw[green] (6.429,0) circle [radius=2.143cm];
\draw[green] (8.571,0) circle [radius=2.143cm];
\draw[green] (10.714,0) circle [radius=2.143cm];
\draw[green] (12.857,0) circle [radius=2.143cm];
\draw (5,0) circle [radius=5cm];
\draw (12,0) circle [radius=3cm];
\draw (0,0) node[below=9pt, left=1pt]{$\frac{n}{N}$};
\draw (5,0) node[below=9pt, left=1pt]{$\frac{3n+1}{3N}$};
\draw (12,0) node[below=9pt, right=1pt]{$\frac{5n+4}{5N}$};
\draw (15,0) node[below=9pt, right=1pt]{$\frac{n+1}{N}$};
\fill[black] (0,0) circle [radius=0.12cm];
\fill[black] (5,0) circle [radius=0.12cm];
\fill[black] (12,0) circle [radius=0.12cm];
\fill[black] (15,0) circle [radius=0.12cm];
\end{tikzpicture}
\caption{Potential disks of radius $\frac{1}{7N}$}
\end{figure}
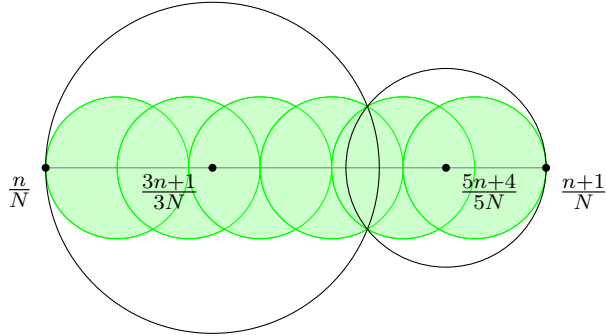

Case 6: $b=8N$. Then, the potential disks of radius $\frac{1}{8N}$ cover the following green region, and it can be shown by computations that the green region lies in $\cl{R}$.

\begin{figure}[H]
\centering
\begin{tikzpicture}[scale=0.439]
\fill[fill=green!20!white,draw=green] (1.875,0) circle [radius=1.875cm];
\fill[fill=green!20!white,draw=green] (5.625,0) circle [radius=1.875cm];
\fill[fill=green!20!white,draw=green] (9.375,0) circle [radius=1.875cm];
\fill[fill=green!20!white,draw=green] (13.125,0) circle [radius=1.875cm];
\draw[gray, thin](0,0)--(15,0);
\draw[green] (1.875,0) circle [radius=1.875cm];
\draw[green] (5.625,0) circle [radius=1.875cm];
\draw[green] (9.375,0) circle [radius=1.875cm];
\draw[green] (13.125,0) circle [radius=1.875cm];
\draw (5,0) circle [radius=5cm];
\draw (12,0) circle [radius=3cm];
\draw (0,0) node[below=9pt, left=1pt]{$\frac{n}{N}$};
\draw (5,0) node[below=9pt, left=1pt]{$\frac{3n+1}{3N}$};
\draw (12,0) node[below=9pt, right=1pt]{$\frac{5n+4}{5N}$};
\draw (15,0) node[below=9pt, right=1pt]{$\frac{n+1}{N}$};
\fill[black] (0,0) circle [radius=0.12cm];
\fill[black] (5,0) circle [radius=0.12cm];
\fill[black] (12,0) circle [radius=0.12cm];
\fill[black] (15,0) circle [radius=0.12cm];
\end{tikzpicture}
\caption{Potential disks of radius $\frac{1}{8N}$}
\end{figure}
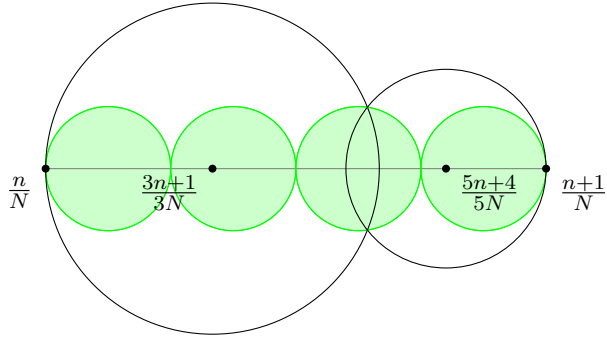

Case 7: $b\geq 9N$. All the potential disks of radius $\leq\frac{1}{9N}$ are contained in the region
\begin{align*}
    \bigcup_{\frac{9n+1}{9N}\leq\alpha\leq\frac{9n+8}{9N}}D\big(\alpha,\tfrac{1}{9N}\big)
\end{align*}
where $\alpha$ ranges over all real numbers in the interval. Visually, this is the region swept by a closed disk from $D\big(\frac{9n+1}{9N},\frac{1}{9N}\big)$ to $D\big(\frac{9n+8}{9N},\frac{1}{9N}\big)$ (see the green region in \Cref{disk-radius-9n}). It is an easy computation that the intersection points of the boundaries of $D_{\frac{3n+1}{3N}}$ and $D_{\frac{5n+4}{5N}}$ have $y$-coordinates equal to $\pm\frac{\sqrt{3}}{14N}$ and $\frac{1}{9N}<\frac{\sqrt{3}}{14N}$, so the swept region is contained in $\cl{R}$.

\begin{figure}[H]
\centering
\begin{tikzpicture}[scale=0.439]
\fill[fill=green!20!white,draw=green] (1.667,1.667) arc [start angle=90, end angle=270, radius=1.667cm];
\fill[fill=green!20!white] (1.665,-1.667) rectangle (13.335,1.667);
\fill[fill=green!20!white,draw=green] (13.333,-1.667) arc [start angle=-90, end angle=90, radius=1.667cm];
\draw[green](1.665,1.667)--(13.335,1.667);
\draw[green](1.665,-1.667)--(13.335,-1.667);
\draw[gray, thin](0,0)--(15,0);
\draw (5,0) circle [radius=5cm];
\draw (12,0) circle [radius=3cm];
\draw (0,0) node[below=9pt, left=1pt]{$\frac{n}{N}$};
\draw (5,0) node[below=9pt, left=1pt]{$\frac{3n+1}{3N}$};
\draw (12,0) node[below=9pt, right=1pt]{$\frac{5n+4}{5N}$};
\draw (15,0) node[below=9pt, right=1pt]{$\frac{n+1}{N}$};
\fill[black] (0,0) circle [radius=0.12cm];
\fill[black] (5,0) circle [radius=0.12cm];
\fill[black] (12,0) circle [radius=0.12cm];
\fill[black] (15,0) circle [radius=0.12cm];
\end{tikzpicture}
\caption{\label{disk-radius-9n}Potential disks of radius $\leq\frac{1}{9N}$}
\end{figure}
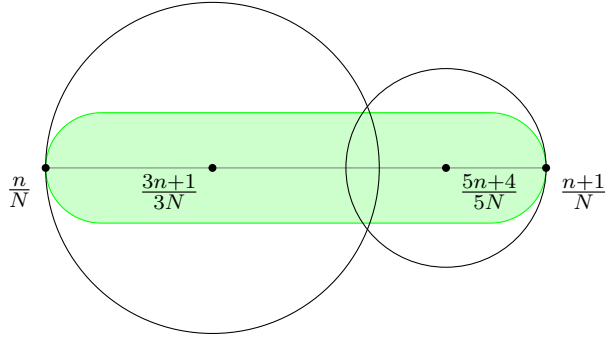

In conclusion, every potential disk $D_\frac{a}{b}$ lies in $\cl{R}$, so $\cl{R}_{N,n}=\cl{R}$, and hence $c(N,n)=0$.
\end{proof}

We also write down the symmetric version of \Cref{35outs}. The proof follows either from the same idea or directly from \Cref{35outs} and \Cref{symmetric}.

\begin{lemma}
\label{35outs2}
Let $(N,n)$ be a bad pair. Suppose that
\begin{enumerate}[label=\rm{(\alph*)}]
    \item each of $2n+1,3n+1,4n+1,5n+2$ is not coprime to $N$;
    \item $\gcd(N,3n+2)=\gcd(N,5n+1)=1$.
\end{enumerate}
Then, 
\begin{align*}
    \cl{R}_{N,n}\eq D_\frac{3n+2}{3N}\cup D_\frac{5n+1}{5N}.
\end{align*}
In particular, $c(N,n)=0$.
\end{lemma}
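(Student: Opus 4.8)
The plan is to derive \Cref{35outs2} from \Cref{35outs} by means of the reflection symmetry of \Cref{symmetric}, exactly as the earlier symmetric statements are obtained. Put $m\deq N-n-1$. By \Cref{symmetric}, $(N,m)$ is again a bad pair, the regions $\cl{R}_{N,n}$ and $\cl{R}_{N,m}$ are mirror images of one another across the line $\Re(z)=1/2$, and $c(N,n)=c(N,m)$; moreover this reflection carries a disk $D\big(\tfrac{a}{b},\tfrac{1}{b}\big)$ to $D\big(\tfrac{b-a}{b},\tfrac{1}{b}\big)$. So it suffices to check that $(N,m)$ satisfies the hypotheses (a)--(b) of \Cref{35outs}, and then to read the description of $\cl{R}_{N,n}$ off from that of $\cl{R}_{N,m}$.

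First I would record the following identities, each a one-line substitution of $m=N-n-1$: modulo $N$,
\begin{gather*}
2m+1\equiv-(2n+1),\qquad 3m+1\equiv-(3n+2),\qquad 3m+2\equiv-(3n+1),\\
4m+3\equiv-(4n+1),\qquad 5m+3\equiv-(5n+2),\qquad 5m+4\equiv-(5n+1).
\end{gather*}
Since $\gcd(N,\cdot)$ depends only on the residue modulo $N$, these identities show that hypothesis (a) of \Cref{35outs} for the pair $(N,m)$ --- that $2m+1,3m+2,4m+3,5m+3$ are each non-coprime to $N$ --- is precisely hypothesis (a) of \Cref{35outs2}, namely that $2n+1,3n+1,4n+1,5n+2$ are each non-coprime to $N$; and hypothesis (b) of \Cref{35outs} for $(N,m)$, that $\gcd(N,3m+1)=\gcd(N,5m+4)=1$, is precisely hypothesis (b) of \Cref{35outs2}, that $\gcd(N,3n+2)=\gcd(N,5n+1)=1$.

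Hence \Cref{35outs} applies to $(N,m)$ and yields $\cl{R}_{N,m}=D_{\frac{3m+1}{3N}}\cup D_{\frac{5m+4}{5N}}$. Reflecting across $\Re(z)=1/2$, the disk $D_{\frac{3m+1}{3N}}$ goes to $D\big(\tfrac{3N-(3m+1)}{3N},\tfrac{1}{3N}\big)=D_{\frac{3n+2}{3N}}$ (using $3N-3m-1=3n+2$) and $D_{\frac{5m+4}{5N}}$ goes to $D\big(\tfrac{5N-(5m+4)}{5N},\tfrac{1}{5N}\big)=D_{\frac{5n+1}{5N}}$ (using $5N-5m-4=5n+1$). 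Since $\cl{R}_{N,n}$ is the reflection of $\cl{R}_{N,m}$, we get $\cl{R}_{N,n}=D_{\frac{3n+2}{3N}}\cup D_{\frac{5n+1}{5N}}$, and $c(N,n)=c(N,m)=0$.

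I do not expect any genuine obstacle here: the argument is pure bookkeeping, and the only point demanding care is matching the six congruences above correctly --- a sign slip or an off-by-one in $m=N-n-1$ would scramble which excluded (resp.\ retained) residue corresponds to which, so I would verify each $\gcd$-preserving identity individually. If a self-contained argument is preferred, one can instead mimic the proof of \Cref{35outs} verbatim, rerunning the case analysis on $b=3N,4N,5N,\dots$ with the reflected candidate disks $D_{\frac{3n+2}{3N}}$, $D\big(\tfrac{4n+3}{4N},\tfrac{1}{4N}\big)$, $D_{\frac{5n+1}{5N}}$, and so on; this is longer but uses nothing beyond \Cref{notcoprime} and the same elementary radius comparisons.
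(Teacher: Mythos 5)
Your proposal is correct and is exactly the route the paper intends: it derives \Cref{35outs2} from \Cref{35outs} via the reflection $n\mapsto N-n-1$ of \Cref{symmetric} (the paper merely asserts this in one line, whereas you carry out the congruence bookkeeping, and all six identities and the two disk images check out). The only cosmetic remark is that the paper's citation of \Cref{fourpoint1} in its justification appears to be a typo for \Cref{symmetric}, which is the lemma you correctly use.
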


Now, we proceed back to the proof. We may suppose that $(N,n)$ is a bad pair, i.e., $\gcd(N,n)>1$ and $\gcd(N,n+1)>1$, that $\gcd(N,2n+1)>1$, and that one of $3n+1,3n+2$ is not coprime to $N$, as otherwise $c(N,n)=0$ by \Cref{goodregion}, \Cref{midpoint}, or \Cref{threepoint}.

\textbf{Case 2:} $n\equiv 0,2\pmod{3}$. Replacing $n$ with $N-n-1$ by \Cref{symmetric}, we may assume that $n\equiv 0\pmod{3}$. Then, $3\divides n$, $3\divides 4n+3$, and $3$ does not divide any of
\begin{align*}
    n+1,\;2n+1,\;3n+1,\;3n+2.
\end{align*}
Now, both $n+1$ and $2n+1$ are not coprime to $N$, and each $p_i$ for $i\geq 2$ divides at most one of 
\begin{align*}
    n+1,\;2n+1,\;3n+1,\;3n+2
\end{align*}
by \Cref{pleq2}. As $\omega(N)=4$, we obtain that exactly one of $3n+1,3n+2$ is coprime to $N$.

Suppose first that $\gcd(N,3n+1)=1$ so that $\gcd(N,3n+2)>1$. By the previous discussion, we must have that $p_2,p_3,p_4$ each divides a distinct element in $\{n+1,2n+1,3n+2\}$. It is easy to check that for $p\geq 5$, $p$ divides at most one of
\begin{align*}
    n+1,\;2n+1,\;3n+2,\;5n+4.
\end{align*}
This implies that $p_i\nmid 5n+4$ for $i\geq 2$. Also, $n\equiv0\pmod{3}$ implies that $3\divides 4n+3$, $3\divides 5n+3$, and $3\nmid 5n+4$. We thus obtain a bad pair $(N,n)$ satisfying that
\begin{enumerate}[label=\rm{(\alph*)}]
    \item each of $2n+1,3n+2,4n+3,5n+3$ is not coprime to $N$;
    \item $\gcd(N,3n+1)=\gcd(N,5n+4)=1$.
\end{enumerate}
Hence, $c(N,n)=0$ by \Cref{35outs}.

Now, suppose that $\gcd(N,3n+2)=1$ so that $\gcd(N,3n+1)>1$. Similarly, $p_2,p_3,p_4$ each divides a distinct element in $\{n+1,2n+1,3n+1\}$. It is easy to check that for $p\geq 5$, $p$ divides at most one of
\begin{align*}
    n+1,\;2n+1,\;3n+1,\;4n+1.
\end{align*}
This implies that $p_i\nmid 4n+1$ for $i\geq 2$. As $3\nmid 4n+1$, this implies that $\gcd(N,4n+1)=1$. We thus obtain a bad pair $(N,n)$ satisfying that
\begin{enumerate}[label=\rm{(\alph*)}]
    \item $\gcd(N,2n+1)>1$;
    \item $\gcd(N,3n+1)>1$ and $\gcd(N,3n+2)=1$;
    \item $\gcd(N,4n+1)=1$.
\end{enumerate}
Hence, $c(N,n)=0$ by \Cref{fourpoint2}.

In conclusion, $c(N)=0$ when $\omega(N)=4$ and $p_1(N)=3$.

\section{The cases when $c(N)\geq 1$}
\label{comp1-section}

In this section, we will show that $c(N)\geq 1$ in the remaining cases, and hence prove \Cref{mainthm}. Explicitly, we will prove the following proposition.

\begin{proposition}
\label{notouts}
Let $N>1$ be an integer. Then, $c(N)\geq 1$ if one of the following holds
\begin{enumerate}[label=\rm{(\arabic*)}]
    \item $\omega(N)\geq 5$;
    \item $\omega(N)\geq 4$ and $p_1(N)=2$.
\end{enumerate}
\end{proposition}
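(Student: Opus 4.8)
The plan is to exhibit, in each of the two cases, a single bad pair $(N,n)$ with $c(N,n)\geq 1$; by \Cref{paircomp} and \Cref{badregion} this gives $c(N)\geq 1$. I will choose $n$ so that $\cl{R}_{N,n}$ contains both disks $D_{\frac{3n+1}{3N}}$ and $D_{\frac{5n+3}{5N}}$ as genuine boundary disks — that is, both lie in $S_{N,n}$ — and so that the north pole $P_{\frac{5n+3}{5N}}$ lies on $D_{\frac{3n+1}{3N}}$. The latter is a one-line computation: since $\frac{5n+3}{5N}-\frac{3n+1}{3N}=\frac{4}{15N}$,
\[
\Bigl|P_{\frac{5n+3}{5N}}-\tfrac{3n+1}{3N}\Bigr|^{2}=\Bigl(\tfrac{4}{15N}\Bigr)^{2}+\Bigl(\tfrac{1}{5N}\Bigr)^{2}=\frac{16+9}{225N^{2}}=\frac{1}{9N^{2}},
\]
so $P_{\frac{5n+3}{5N}}$ lies on (the boundary of) $D_{\frac{3n+1}{3N}}$, and since the two disks are distinct this already forces $c\bigl(P_{\frac{5n+3}{5N}}\bigr)\geq 1$, hence $c(N,n)\geq 1$.

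Concretely, I will require that $(N,n)$ satisfy
\begin{gather*}
\gcd(N,n)>1,\qquad \gcd(N,n+1)>1,\qquad \gcd(N,2n+1)>1,\\
\gcd(N,3n+2)>1,\qquad \gcd(N,4n+3)>1,\qquad \gcd(N,3n+1)=1,\qquad \gcd(N,5n+3)=1.
\end{gather*}
The first two make $(N,n)$ a bad pair; the conditions $\gcd(N,3n+1)=\gcd(N,5n+3)=1$ guarantee that $D_{\frac{3n+1}{3N}}$ and $D_{\frac{5n+3}{5N}}$ are available (both are automatically in lowest terms). Then $D_{\frac{3n+1}{3N}}\in S_{N,n}$, since it is the largest available disk with left endpoint $\frac{n}{N}$ (the only bigger candidate, $D_{\frac{2n+1}{2N}}$, being unavailable), so it contributes a boundary arc of $\cl{R}_{N,n}$ near $\Re(z)=\frac{n}{N}$. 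For $D_{\frac{5n+3}{5N}}$ one examines the strip $\frac{3n+2}{3N}<\Re(z)<\frac{10n+7}{10N}$, which has length $\frac{1}{30N}$ and lies to the right of the centre of $D_{\frac{5n+3}{5N}}$. Over this strip $D_{\frac{3n+1}{3N}}$ has already returned to the real axis (its right endpoint is $\frac{3n+2}{3N}$), and comparing endpoints one sees that the only disks $D_{\frac{a}{bN}}$ of radius $\geq\frac{1}{5N}$ that reach into this strip are $D_{\frac{2n+1}{2N}}$, $D_{\frac{3n+2}{3N}}$ and $D_{\frac{4n+3}{4N}}$ — all excluded by the three $\gcd>1$ conditions — together with $D_{\frac{5n+4}{5N}}$, which (when available) stays strictly below $D_{\frac{5n+3}{5N}}$ on this strip because the strip lies left of the midpoint $\frac{10n+7}{10N}$ of the two centres. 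Every disk of radius $\leq\frac{1}{6N}$ has north pole of height at most $\frac{1}{6N}<\frac{\sqrt3}{10N}$, and $\frac{\sqrt3}{10N}$ is exactly the minimal height of the arc of $D_{\frac{5n+3}{5N}}$ over the strip, so none of those can overtake it either. Hence $D_{\frac{5n+3}{5N}}$ contributes a positive-length boundary arc and $D_{\frac{5n+3}{5N}}\in S_{N,n}$ — the geometric input we need.

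It remains to produce an integer $0\leq n\leq N-1$ meeting the seven $\gcd$ conditions, which I do via the Chinese Remainder Theorem by prescribing $n$ modulo each prime factor of $N$. If $2\mid N$ — this covers case $(2)$, and also case $(1)$ when $N$ is even — take $n\equiv 0\pmod 2$; then $2$ divides both $n$ and $3n+2$, and it suffices to take three further distinct odd primes $p,q,r\mid N$ with $r\geq 7$ (available since $N$ has at least three odd prime factors, not all lying in $\{3,5\}$) and impose $p\mid n+1$, $q\mid 2n+1$, $r\mid 4n+3$. If instead $N$ is odd with $\omega(N)\geq 5$, pick five distinct odd primes $q_1,\dots,q_5\mid N$ with $q_1\neq 3$ and $q_5\notin\{3,5\}$ and impose $q_1\mid n$, $q_2\mid n+1$, $q_3\mid 2n+1$, $q_4\mid 3n+2$, $q_5\mid 4n+3$; this is consistent because these five linear forms in $n$ are pairwise coprime once $3\nmid n$, which the above choices already force. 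In both cases, at each remaining prime $\ell\mid N$ choose the residue of $n$ to avoid the at most two classes making $\ell\mid 3n+1$ or $\ell\mid 5n+3$. That $\gcd(N,3n+1)=\gcd(N,5n+3)=1$ then follows by elementary divisibility bookkeeping of exactly the type in \Cref{pleq5}: e.g.\ $p\mid n+1$ together with $p\mid 3n+1$ would force $p\mid 2$; $q\mid 2n+1$ with $q\mid 3n+1$ would force $q\mid 1$; $r\mid 4n+3$ with $r\mid 3n+1$, resp.\ with $r\mid 5n+3$, would force $r\mid 5$, resp.\ $r\mid 3$; and similarly for the $q_i$. The small-prime restrictions $q_1\neq 3$ and $r,q_5\notin\{3,5\}$ are precisely what rules out the remaining collisions. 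The resulting $(N,n)$ is a bad pair with $c(N,n)\geq 1$, so $c(N)\geq 1$.

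I expect the main obstacle to be the geometric claim in the second paragraph — showing that $D_{\frac{5n+3}{5N}}$ genuinely meets $\partial\cl{R}_{N,n}$ — which reduces to the finite case check that, among all potential disks, only the three we remove could overtake its arc on the critical strip; the remaining work is the Chinese Remainder bookkeeping, which is routine but must be organised so that the primes $2$, $3$, $5$ are given mutually compatible roles.
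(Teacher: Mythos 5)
Your construction is, up to the reflection $n\mapsto N-n-1$ of \Cref{symmetric}, the same as the paper's: the paper's \Cref{35region} uses the triple $\{3n+2,\,4n+1,\,5n+2\}$ and the fact that the north pole of $D_{\frac{5n+2}{5N}}$ lies on the boundary circles of the other two disks, while you use the mirror triple $\{3n+1,\,4n+3,\,5n+3\}$; your seven gcd conditions are exactly the mirror images of the hypotheses of \Cref{35region}, and your CRT step mirrors the paper's assignment of one prime factor of $N$ to each of the five linear forms that must fail to be coprime to $N$. Your distance computation and the strip argument showing $\frac{5n+3}{5N}\in S_{N,n}$ are correct (the enumeration of disks of radius $\geq\frac{1}{5N}$ reaching the strip should also list $D_{\frac{n}{N}}$ and $D_{\frac{n+1}{N}}$, but these are excluded by the bad-pair conditions), and counting a north pole that lies \emph{on} the boundary circle of another disk of $S_{N,n}$ is consistent with the paper's own convention in \Cref{35region}, where the relevant north pole is likewise only a boundary intersection point.

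There is one genuine slip, in the arithmetic step for odd $N$ with $\omega(N)\geq 5$. Consistency of your system of congruences is not a matter of pairwise coprimality of the five linear forms: with distinct prime moduli the Chinese Remainder Theorem is automatic, and what must actually be checked is that each individual congruence is solvable. The congruence $q_4\divides 3n+2$ has no solution when $q_4=3$, because the leading coefficient $3$ annihilates the variable modulo $3$; your stated constraints ($q_1\neq 3$ and $q_5\notin\{3,5\}$) do not exclude the assignment $q_4=3$, so the construction as written can fail when $3\divides N$ and $\omega(N)=5$. The fix is immediate: also require $q_4\neq 3$, which is always possible since the prime $3$, if it divides $N$, can be placed at $q_2$ or $q_3$, where the forms $n+1$ and $2n+1$ have leading coefficients invertible modulo $3$ and, as you already verify, create no conflict with $\gcd(N,3n+1)=\gcd(N,5n+3)=1$. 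With this amendment the proof is complete.
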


Note that by \Cref{paircomp}, $c(N)=0$ if and only if $c(N,n)=0$ for every pair $(N,n)$. Hence, the first step is to consider when a pair can have nonzero complexity, i.e., $c(N,n)\geq 1$. The motivation is the shape of the bad region that appears in (the symmetric version of) the proof of \Cref{fourpoint1}.

Indeed, for fixed $n$ and $N$, it is easy to show that the boundaries of the three disks $D\big(\frac{3n+2}{3N},\frac{1}{3N}\big)$, $D\big(\frac{4n+1}{4N},\frac{1}{4N}\big)$, and $D\big(\frac{5n+2}{5N},\frac{1}{5N}\big)$ intersect above the real line at exactly one point, which is precisely the north pole of $D\big(\frac{5n+2}{5N},\frac{1}{5N}\big)$ (see \Cref{intersect-3n-4n-5n}). Hence, a possible way of creating a pair $(N,n)$ with nonzero complexity is to have both $D\big(\frac{3n+2}{3N},\frac{1}{3N}\big)$ and $D\big(\frac{5n+2}{5N},\frac{1}{5N}\big)$ strictly lie on the boundary of $\cl{R}_{N,n}$, so that the north pole of $D\big(\frac{5n+2}{5N},\frac{1}{5N}\big)$ is an intersection point on the boundary.

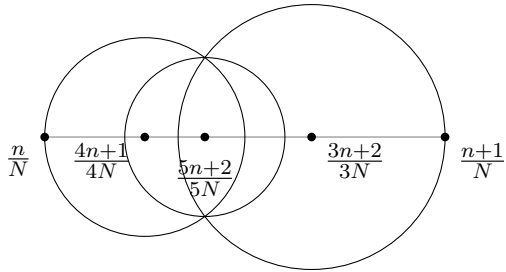
\begin{figure}[H]
\centering
\begin{tikzpicture}[scale=0.439]
\draw[gray, thin](0,0)--(12,0);
\draw (8,0) circle [radius=4cm];
\draw (4.8,0) circle [radius=2.4cm];
\draw (3,0) circle [radius=3cm];
\draw (0,0) node[below=9pt, left=1pt]{$\frac{n}{N}$};
\draw (8,0) node[below=9pt, right=1pt]{$\frac{3n+2}{3N}$};
\draw (4.8,0) node[below=5pt]{$\frac{5n+2}{5N}$};
\draw (3,0) node[below=9pt, left=1pt]{$\frac{4n+1}{4N}$};
\draw (12,0) node[below=9pt, right=1pt]{$\frac{n+1}{N}$};
\fill[black] (0,0) circle [radius=0.13cm];
\fill[black] (8,0) circle [radius=0.13cm];
\fill[black] (4.8,0) circle [radius=0.13cm];
\fill[black] (3,0) circle [radius=0.13cm];
\fill[black] (12,0) circle [radius=0.13cm];
\end{tikzpicture}
\caption{\label{intersect-3n-4n-5n}Intersection of $D\big(\frac{3n+2}{3N},\frac{1}{3N}\big)$, $D\big(\frac{4n+1}{4N},\frac{1}{4N}\big)$, and $D\big(\frac{5n+2}{5N},\frac{1}{5N}\big)$}
\end{figure}

To create such a situation, we first need to assume that $\gcd(N,3n+2)=\gcd(N,5n+2)=1$. We would also need to assume that each of 
\begin{align*}
    n,\;n+1,\;2n+1,\;3n+1,\;4n+1
\end{align*}
is not coprime to $N$, so that both of the disks $D\big(\frac{3n+2}{3N},\frac{1}{3N}\big)$ and $D\big(\frac{5n+2}{5N},\frac{1}{5N}\big)$ stay strictly on the boundary of $\cl{R}_{N,n}$. We will prove that these assumptions imply that $c(N,n)\geq 1$.

\begin{lemma}
\label{35region}
Let $(N,n)$ be a pair. Suppose that
\begin{enumerate}[label=\rm{(\alph*)}]
    \item each of $n,n+1,2n+1,3n+1,4n+1$ is not coprime to $N$;
    \item $\gcd(N,3n+2)=\gcd(N,5n+2)=1$.
\end{enumerate}
Then, $c(N,n)\geq 1$.
\end{lemma}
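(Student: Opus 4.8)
The plan is to exhibit the specific geometric configuration sketched before the lemma and argue that the north pole of $D_{\frac{5n+2}{5N}}$ is both on the boundary of $\cl{R}_{N,n}$ and covered by another disk that lies strictly on the boundary. First I would record the key identity that the boundaries of the three disks $D_{\frac{3n+2}{3N}}$, $D_{\frac{4n+1}{4N}}$, and $D_{\frac{5n+2}{5N}}$ meet at a single point above the real axis, namely $P=\frac{5n+2}{5N}+\frac{1}{5N}i$, the north pole of $D_{\frac{5n+2}{5N}}$; this is the elementary computation indicated in \Cref{intersect-3n-4n-5n} and I would just verify $|P-\frac{3n+2}{3N}|=\frac{1}{3N}$ and $|P-\frac{4n+1}{4N}|=\frac{1}{4N}$ directly. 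By assumption (b), $\frac{3n+2}{3N}$ and $\frac{5n+2}{5N}$ are in reduced form with denominators divisible by $N$, so $D_{\frac{3n+2}{3N}}$ and $D_{\frac{5n+2}{5N}}$ are among the disks defining $\cl{R}_{N,n}$.

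Next I would show that $D_{\frac{5n+2}{5N}}$ lies strictly on the boundary of $\cl{R}_{N,n}$, i.e.\ that $\frac{5n+2}{5N}\in S_{N,n}$, and likewise that $\frac{3n+2}{3N}\in S_{N,n}$. The idea is that assumption (a) forbids any disk of radius $\frac1N$, $\frac1{2N}$, $\frac1{3N}$ (other than $D_{\frac{3n+2}{3N}}$), or $\frac1{4N}$ from appearing in this bad region: a disk $D_{\frac{a}{b}}$ with $b=kN$ requires $\gcd(a,kN)=1$ with $\frac nN\le\frac ab\le\frac{n+1}N$, and the only candidates $a$ are $kn+1,\dots,kn+(k-1)$, all of which are ruled out by (a) for $k\le 4$ except $a=3n+2$. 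Hence near $P$ the region $\cl{R}_{N,n}$ is, to leading order, covered only by disks of radius $\le\frac1{5N}$ together with $D_{\frac{3n+2}{3N}}$, and one checks that an arc of positive length of $\partial D_{\frac{5n+2}{5N}}$ near its north pole is not covered by any disk of radius $\le\frac1{6N}$ (a swept-region estimate as in the proofs of \Cref{midpoint}, \Cref{threepoint}, \Cref{35outs}: the relevant intersection heights are strictly below the north pole of $D_{\frac{5n+2}{5N}}$ because $\frac1{6N}$ and smaller radii cannot reach height $\frac1{5N}$ over the relevant $x$-interval). Similarly $D_{\frac{3n+2}{3N}}$ contributes a genuine boundary arc, since nothing of radius $\frac1N,\frac1{2N}$ is available and the radius-$\frac1{4N}$ and smaller disks sit strictly inside it except near the overlap with $D_{\frac{5n+2}{5N}}$. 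Thus both $\frac{3n+2}{3N}$ and $\frac{5n+2}{5N}$ belong to $S_{N,n}$.

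Finally, since $P$ is the north pole $P_{\frac{5n+2}{5N}}$ and by the identity $P\in\partial D_{\frac{3n+2}{3N}}\subseteq D_{\frac{3n+2}{3N}}$, we get $\frac{3n+2}{3N}\in\{\beta\in S_{N,n}\mid P_{\frac{5n+2}{5N}}\in D_\beta\}$, so $c(P_{\frac{5n+2}{5N}})\ge 1$, whence $c(N,n)\ge 1$. (One should also note $\frac{3n+2}{3N}\ne\frac{5n+2}{5N}$, which is clear.) The main obstacle I anticipate is the middle step: rigorously ruling out that some disk of radius $\frac1{5N}$ or smaller — for instance $D_{\frac{5n+1}{5N}}$, $D_{\frac{5n+3}{5N}}$, or a radius-$\frac1{6N}$ disk — might cover the north pole $P$ or otherwise prevent $\frac{5n+2}{5N}$ from lying on the boundary; handling this cleanly will require the same kind of explicit intersection-height computations used in \Cref{35outs}, checking that all competing disks of radius $\le\frac1{6N}$ reach height strictly less than $\frac1{5N}$ at $x=\frac{5n+2}{5N}$, so that a positive-length arc of $\partial D_{\frac{5n+2}{5N}}$ around $P$ survives on $\partial\cl{R}_{N,n}$. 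Once that local picture is pinned down, the conclusion $c(N,n)\ge1$ is immediate.
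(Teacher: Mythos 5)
Your proposal is correct and follows essentially the same route as the paper's proof: both $D_{\frac{3n+2}{3N}}$ and $D_{\frac{5n+2}{5N}}$ are forced into $S_{N,n}$ by checking that the competing disks of radius $\frac{1}{4N}$, $\frac{1}{5N}$, and $\leq\frac{1}{6N}$ cannot cover the arc of $\partial D_{\frac{5n+2}{5N}}$ adjacent to its north pole $P$, which lies on $\partial D_{\frac{3n+2}{3N}}$ and hence is covered by that disk. One small imprecision: for $k=4$ the candidate $a=4n+3$ is \emph{not} excluded by assumption (a) — the disk $D\big(\frac{4n+3}{4N},\frac{1}{4N}\big)$ may genuinely occur but is contained in $D_{\frac{3n+2}{3N}}$, which is what the paper verifies and what you in effect use later when you say the radius-$\frac{1}{4N}$ disks sit inside $D_{\frac{3n+2}{3N}}$.
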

\begin{proof}
Indeed, $D_\frac{3n+2}{3N}\subseteq\cl{R}_{N,n}$ and $D_\frac{5n+2}{5N}\subseteq\cl{R}_{N,n}$ by the assumptions. By our discussion, it suffices to show that both $D_\frac{3n+2}{3N}$ and $D_\frac{5n+2}{5N}$ lie strictly on the boundary, i.e., $\frac{3n+2}{3N},\frac{5n+2}{5N}\in S_{N,n}$. We will prove this ``visually''. Note that $D_\frac{3n+2}{3N}$ is the only disk of radius $\geq\frac{1}{3N}$ that lies in $\cl{R}_{N,n}$, so we turn to considering disks of radius $\leq\frac{1}{4N}$.

The only potential disk of radius $\frac{1}{4N}$ is $D\big(\frac{4n+3}{4N},\frac{1}{4N}\big)\subseteq D_\frac{3n+2}{3N}$. The potential disks of radius $\frac{1}{5N}$ cover the following green region.
\begin{figure}[H]
\centering
\begin{tikzpicture}[scale=0.439]
\fill[fill=green!20!white,draw=green] (2.4,0) circle [radius=2.4cm];
\fill[fill=green!20!white,draw=green] (7.2,0) circle [radius=2.4cm];
\fill[fill=green!20!white,draw=green] (9.6,0) circle [radius=2.4cm];
\draw[gray, thin](0,0)--(12,0);
\draw[green] (2.4,0) circle [radius=2.4cm];
\draw (8,0) circle [radius=4cm];
\draw[green] (7.2,0) circle [radius=2.4cm];
\draw (4.8,0) circle [radius=2.4cm];
\draw[green] (9.6,0) circle [radius=2.4cm];
\draw (0,0) node[below=9pt, left=1pt]{$\frac{n}{N}$};
\draw (8,0) node[below=9pt, right=1pt]{$\frac{3n+2}{3N}$};
\draw (4.8,0) node[below=5pt]{$\frac{5n+2}{5N}$};
\draw (12,0) node[below=9pt, right=1pt]{$\frac{n+1}{N}$};
\fill[black] (0,0) circle [radius=0.13cm];
\fill[black] (8,0) circle [radius=0.13cm];
\fill[black] (4.8,0) circle [radius=0.13cm];
\fill[black] (12,0) circle [radius=0.13cm];
\end{tikzpicture}
\caption{Potential disks of radius $\frac{1}{5N}$}
\end{figure}
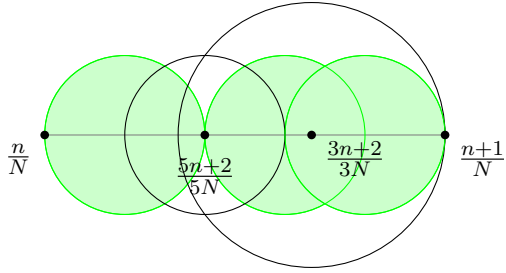

For potential disks of radius $\leq\frac{1}{6N}$, they all lie in the region
\begin{align*}
    \bigcup_{\frac{6n+1}{6N}\leq\alpha\leq\frac{6n+5}{6N}}D\big(\alpha,\frac{1}{6N}\big)
\end{align*}
where $\alpha$ ranges over all real numbers in the interval. Visually, this is the region swept by a closed disk from $D\big(\frac{6n+1}{6N},\frac{1}{6N}\big)$ to $D\big(\frac{6n+5}{6N},\frac{1}{6N}\big)$ (see the green region in \Cref{disk-radius-6n}).
\begin{figure}[H]
\centering
\begin{tikzpicture}[scale=0.439]
\fill[fill=green!20!white,draw=green] (2,2) arc [start angle=90, end angle=270, radius=2cm];
\fill[fill=green!20!white,draw=green] (10,-2) arc [start angle=-90, end angle=90, radius=2cm];
\fill[fill=green!20!white] (1.95,-2) rectangle (10.05,2);
\draw[gray, thin](0,0)--(12,0);
\draw[green] (1.95,2)--(10.05,2);
\draw[green] (1.95,-2)--(10.05,-2);
\draw (8,0) circle [radius=4cm];
\draw (4.8,0) circle [radius=2.4cm];
\draw (0,0) node[below=9pt, left=1pt]{$\frac{n}{N}$};
\draw (8,0) node[below=9pt, right=1pt]{$\frac{3n+2}{3N}$};
\draw (4.8,0) node[below=5pt]{$\frac{5n+2}{5N}$};
\draw (12,0) node[below=9pt, right=1pt]{$\frac{n+1}{N}$};
\fill[black] (0,0) circle [radius=0.13cm];
\fill[black] (4.8,0) circle [radius=0.13cm];
\fill[black] (8,0) circle [radius=0.13cm];
\fill[black] (12,0) circle [radius=0.13cm];
\end{tikzpicture}
\caption{\label{disk-radius-6n}Potential disks of radius $\leq\frac{1}{6N}$}
\end{figure}
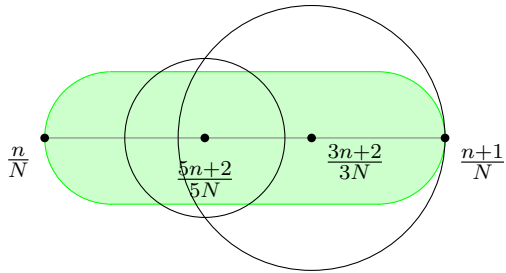

In conclusion, none of the remaining potential disks cover the intersection point of the boundaries of $D_\frac{3n+2}{3N}$ and $D_\frac{5n+2}{5N}$ completely, so both disks must lie strictly on the boundary of $\cl{R}_{N,n}$. Hence, $c(N,n)\geq c\big(P_{\frac{5n+2}{5N}}\big)\geq 1$.
\end{proof}

\begin{corollary}
\label{35n}
Let $N>1$ be an integer. Then, $c(N)\geq 1$ if there exists an integer $n$ with $0\leq n\leq N-1$ such that \begin{enumerate}[label=\rm{(\alph*)}]
    \item each of $n,n+1,2n+1,3n+1,4n+1$ is not coprime to $N$;
    \item $\gcd(N,3n+2)=\gcd(N,5n+2)=1$.
\end{enumerate}
\end{corollary}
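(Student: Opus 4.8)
The plan is to deduce this immediately from \Cref{35region} together with \Cref{paircomp}; no new geometric input is needed, since all the work has already been done in the proof of \Cref{35region}. First I would observe that condition~(a) of the corollary in particular asserts that $n$ is not coprime to $N$, i.e.\ $\gcd(N,n)>1$, and this together with $0\leq n\leq N-1$ means that $(N,n)$ is a \emph{pair} in the sense fixed earlier in the paper. Hence \Cref{35region} applies to this $(N,n)$ verbatim, and its conclusion is precisely $c(N,n)\geq 1$.

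The second step is to pass from this local statement to the global invariant. By \Cref{paircomp} we have $c(N)=\max_{(N,n')}c(N,n')$, the maximum ranging over all pairs. Since the $n$ supplied by the hypothesis yields one such pair with $c(N,n)\geq 1$, we conclude $c(N)\geq c(N,n)\geq 1$, which is exactly the assertion. The only point requiring a moment's care is checking that the hypotheses of the corollary really do produce a legitimate pair (so that \Cref{35region} is applicable) — but this is immediate from condition~(a) as noted above. There is no genuine obstacle: the corollary is simply the bookkeeping step that promotes the region-level bound of \Cref{35region} to a bound on $c(N)$, and it will be used downstream, in combination with a counting/pigeonhole argument producing a suitable $n$, to establish the remaining cases of \Cref{notouts}.
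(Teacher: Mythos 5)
Your proposal is correct and is essentially identical to the paper's own one-line argument: apply \Cref{35region} to get $c(N,n)\geq 1$ and then pass to $c(N)$ via the maximum over pairs (the paper cites \Cref{badregion} where you cite \Cref{paircomp}, but either gives $c(N)\geq c(N,n)$). Your extra check that condition~(a) makes $(N,n)$ a legitimate pair is a correct and harmless bit of bookkeeping.
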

\begin{proof}
If such $n$ exists, then $c(N,n)\geq 1$ by \Cref{35region}, so $c(N)\geq c(N,n)\geq 1$ by \Cref{badregion}.
\end{proof}

Before proving \Cref{notouts}, we will state one last lemma.

\begin{lemma}
\label{pleq5n}
Let $n$ be an integer and $p$ be a prime such that there exist
\begin{align*}
    a\in\{n,n+1,2n+1,3n+1,4n+1\}\text{ and }b\in\{3n+2,5n+2\}
\end{align*}
with $p\divides a$ and $p\divides b$. Then, $p\leq 5$.
\end{lemma}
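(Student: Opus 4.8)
The plan is to mimic the proofs of \Cref{pleq2} and \Cref{pleq5} verbatim. Each element of $\{n,n+1,2n+1,3n+1,4n+1\}$ and of $\{3n+2,5n+2\}$ is a linear polynomial in $n$, so I would write $a=a_1n+a_2$ and $b=b_1n+b_2$ with small integer coefficients. Then $p\divides a$ and $p\divides b$ force $p$ to divide the $n$-free integer $b_1a-a_1b=b_1a_2-a_1b_2$, which is precisely a $2\times 2$ determinant in the coefficients and does not depend on $n$.

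Next I would simply run through the ten pairs $(a,b)$ with $a\in\{n,n+1,2n+1,3n+1,4n+1\}$ and $b\in\{3n+2,5n+2\}$ and record $|a_1b_2-a_2b_1|$ in each case. A quick tabulation shows the only values that occur are $1,2,3,5$; for instance the pair $(2n+1,\,3n+2)$ gives $|2\cdot 2-1\cdot 3|=1$ while the pair $(4n+1,\,3n+2)$ gives $|4\cdot 2-1\cdot 3|=5$. Since in every case this quantity is a nonzero integer all of whose prime divisors are at most $5$, we conclude $p\leq 5$.

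The only point requiring (trivial) care is to confirm that $a_1b_2-a_2b_1\neq 0$ for each of the ten pairs, so that the divisibility $p\divides a_1b_2-a_2b_1$ is nonvacuous; this is immediate from the tabulation, since the smallest absolute value attained is $1$. I expect no real obstacle: the lemma is a finite bookkeeping check, entirely parallel to \Cref{pleq5}. It is worth noting that the bound $p\leq 5$ is sharp — the value $5$ is actually attained — which is exactly why the subsequent analysis of the case $\omega(N)=4$ genuinely has to treat $p_1(N)=2,3,5$ separately before the generic counting argument based on \Cref{35region} can be applied.
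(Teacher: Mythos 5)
Your proposal is correct and is essentially identical to the paper's proof: write $a=a_1n+a_2$, $b=b_1n+b_2$, deduce $p\divides a_1b_2-a_2b_1$, and check that the ten resulting determinants all lie in $\{1,2,3,5\}$. The extra remark that the determinants are nonzero is a harmless (and accurate) bit of added care.
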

\begin{proof}
Write $a=a_1n+a_2$ and $b=b_1n+b_2$. Then, $p\divides a$ and $p\divides b$ implies that $p\divides a_1b_2-a_2b_1$. Here, the only possible values of $|a_1b_2-a_2b_1|$ are $1,2,3,5$, so we must have $p\leq 5$.
\end{proof}

We are now ready to prove \Cref{notouts}. Assume first that $\omega(N)\geq 5$. As before, we write $p_i=p_i(N)$ for $1\leq i\leq \omega(N)$.

\textbf{Case 1:} $p_1\geq 7$. Let $0\leq a_i\leq p_i-1$ such that
\begin{align*}
    p_1\divides a_1, \; p_2\divides a_2+1, \; p_3\divides 2a_3+1, \; p_4\divides 3a_4+1,  \; p_5\divides 4a_5+1,
\end{align*}
and $p_i\divides a_i$ for $6\leq i\leq \omega(N)$. By the Chinese remainder theorem, there exists a unique integer~$n$ with $0\leq n\leq\prod_{i=1}^{\omega(N)}p_i-1\leq N-1$ such that
\begin{align*}
    n\;\equiv\;a_i\pmod{p_i}\quad\text{ for all }1\leq i\leq \omega(N).
\end{align*}
Now,
\begin{align*}
    p_1\divides n, \; p_2\divides n+1, \; p_3\divides 2n+1, \; p_4\divides 3n+1,  \; p_5\divides 4n+1,
\end{align*}
and $p_i\divides n$ for $6\leq i\leq \omega(N)$. As $p_i\geq p_1\geq 7$ for all $i$, by \Cref{pleq5n}, this implies that $p_i\nmid 3n+2$ and $p_i\nmid 5n+2$ for all $i$. In conclusion, we find an integer $n$ with $0\leq n\leq N-1$ such that
\begin{enumerate}[label=\rm{(\alph*)}]
    \item each of $n,n+1,2n+1,3n+1,4n+1$ is not coprime to $N$;
    \item $\gcd(N,3n+2)=\gcd(N,5n+2)=1$.
\end{enumerate}
Hence, $c(N)\geq 1$ by \Cref{35n}.

\textbf{Case 2:} $p_1=5$. We may construct $a_i$ and $n$ exactly as in Case 1. Note that in particular $5\divides n$ so $5\nmid 3n+2$ and $5\nmid 5n+2$. By the same reasoning as before, $n$ satisfies the conditions in \Cref{35n}, so $c(N)\geq 1$.

\textbf{Case 3:} $p_1=3$. We may still construct $a_i$ and $n$ exactly as in Case 1. Here one should check that $p_i\nmid 3n+2$ and $p_i\nmid 5n+2$ separately for $i=1$ and for $i=2$ if $p_2=5$, By the same reasoning as before, $n$ satisfies the conditions in \Cref{35n}, so $c(N)\geq 1$.
\\

Now, consider the last case when $p_1=2$ and $\omega(N)\geq 4$. Let $0\leq a_i\leq p_i-1$ such that $a_1=1$ (so that $2\divides a_1+1$ and $2\divides 3a_1+1$), 
\begin{align*}
    p_2\divides a_2,\; p_3\divides 2a_3+1, \; p_4\divides 4a_4+1, 
\end{align*}
and $p_i\divides a_i$ for $5\leq i\leq \omega(N)$. We may then construct $n$ as before using the Chinese remainder theorem. Here one should check that $p_i\nmid 3n+2$ and $p_i\nmid 5n+2$ separately for $i=1$, $i=2$ if $p_2=3$ or $5$, and $i=3$ if $p_3=5$. By the same reasoning as before, $n$ satisfies the conditions in \Cref{35n}, so $c(N)\geq 1$.

In conclusion, $c(N)\geq 1$ if $\omega(N)\geq 5$ or $\omega(N)\geq 4$ and \mbox{$p_1(N)=2$}, so \Cref{notouts} then follows.

From this, we also conclude \Cref{mainthm} by
\begin{enumerate}[label=(\arabic*)]
    \item \Cref{omega2-section} for the case when $\omega(N)\leq 2$,
    \item \Cref{omega3-section} for the case when $\omega(N)=3$,
    \item \Cref{omega4-section} for the case when $\omega(N)=4$ and $p_1(N)=3$, and
    \item \Cref{notouts} for the remaining cases.
\end{enumerate}

\section{Proof of \Cref{compinfty}}

In this section, we will prove \Cref{compinfty}. Throughout this section, let $q$ be a fixed prime number and $(N,n)$ be a fixed but arbitrary bad pair, i.e., an integer $N>1$ and an integer $0\leq n\leq N-1$ with $\gcd(N,n)>1$ and $\gcd(N,n+1)>1$.

Let $A=\lceil q^2/2\rceil\geq q^2/2$ and consider the family of disks
\begin{align*}
    D^{(j)}\deq D\bigg(\frac{(Aq+j)n+q}{(Aq+j)N}\thin,\thin\frac{1}{(Aq+j)N}\bigg)
\end{align*}
for $1\leq j\leq q-1$.

\begin{proposition}
\label{famdisk}
We have the following:
\begin{enumerate}[label=\rm{(\arabic*)}]
        \item for any $1\leq j\leq q-1$, there exists an arc of the boundary of $D^{(j)}$ of positive length that is not covered by $D^{(l)}$ for any $1\leq l\leq q-1$ with $l\neq j$;
    \item the north pole $P=\frac{(Aq+q-1)n+q}{(Aq+q-1)N}+\frac{1}{(Aq+q-1)N}\cdot i$ of $D^{(q-1)}$ lies in (the interior of) $D^{(j)}$ for all $1\leq j\leq q-2$.
\end{enumerate}
\end{proposition}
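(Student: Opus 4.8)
The strategy is to reduce everything to explicit computations with circles tangent to the real line. Recall that a disk $D(\alpha, r)$ with $\alpha \in \R$ and $r > 0$ is tangent to the real axis at $\alpha$, with north pole $\alpha + ri$. Write $c_j = Aq+j$ for $1 \le j \le q-1$, so that $D^{(j)} = D\big(\tfrac{c_j n + q}{c_j N}, \tfrac{1}{c_j N}\big)$ has center $x_j := \tfrac{n}{N} + \tfrac{q}{c_j N}$ on the real line and radius $r_j := \tfrac{1}{c_j N}$. Note that the centers $x_j$ are strictly decreasing in $j$ (since $q > 0$ and $c_j$ is increasing) and the radii $r_j$ are also strictly decreasing in $j$; moreover all disks pass ``near'' the point $\tfrac{n}{N}$ in the sense that $x_j - r_j = \tfrac{n}{N} + \tfrac{q-1}{c_j N} \ge \tfrac{n}{N}$, with equality impossible since $q \ge 2$ forces $q - 1 \ge 1$. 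The key quantitative fact I would isolate first is a formula for when the boundary circle of $D^{(j)}$ lies inside $D^{(l)}$: since both circles are tangent to $\R$, the circle of $D^{(j)}$ is contained in $D^{(l)}$ if and only if $|x_j - x_l| \le r_l - r_j$ (internal tangency/containment condition for circles tangent to a common line on the same side), and more generally one can compute the two intersection points of the boundary circles explicitly.

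\textbf{Part (1).} For the first claim I would show that the highest point of $D^{(j)}$ not covered by the others is witnessed near its north pole when $j$ is not extremal, and near an endpoint otherwise — but cleaner is to argue directly. Fix $j$. I claim the north pole $P_j = x_j + r_j i$ of $D^{(j)}$ is not in the interior of $D^{(l)}$ for any $l < j$: indeed $D^{(l)}$ has smaller-indexed (hence larger) radius but its center $x_l > x_j$ lies to the right, and one computes $|x_j - x_l|^2 + r_j^2$ versus $r_l^2$; the inequality $|P_j - x_l|^2 \ge r_l^2$ reduces to a polynomial inequality in $n, N, A, q, j, l$ that I would verify holds (using $A \ge q^2/2$ and $1 \le l < j \le q-1$). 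For $l > j$: here $r_l < r_j$, so $D^{(l)}$ is a strictly smaller disk sitting to the left; its north pole is lower than $P_j$, and again an explicit comparison shows $P_j \notin D^{(l)}$ — in fact it suffices that $r_l < r_j$ together with the centers being ordered to conclude $P_j$ lies strictly above and outside $D^{(l)}$. Once $P_j$ is not in the interior of any $D^{(l)}$, $l \ne j$, by continuity a whole arc of $\partial D^{(j)}$ around $P_j$ avoids all the (closed) disks $D^{(l)}$ except possibly at finitely many boundary-tangency points — and shrinking the arc if necessary gives positive length not covered by any $D^{(l)}$, $l \ne j$. The one subtlety: I must make sure $P_j$ is not simultaneously on $\partial D^{(l)}$ for some $l$, which the strict inequalities above handle.

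\textbf{Part (2).} Now I must show the specific north pole $P = P_{q-1} = x_{q-1} + r_{q-1} i$ of $D^{(q-1)}$ lies in the \emph{interior} of $D^{(j)}$ for every $1 \le j \le q - 2$. This is the crux and the main obstacle, because it runs in the opposite direction to part (1): I need $|P - x_j|^2 < r_j^2$, i.e. $(x_{q-1} - x_j)^2 + r_{q-1}^2 < r_j^2$. Writing $x_{q-1} - x_j = \tfrac{q}{N}\big(\tfrac{1}{c_{q-1}} - \tfrac{1}{c_j}\big) = \tfrac{q(j - (q-1))}{N c_j c_{q-1}}$, this becomes
\begin{align*}
\frac{q^2(q-1-j)^2}{N^2 c_j^2 c_{q-1}^2} + \frac{1}{c_{q-1}^2 N^2} \;<\; \frac{1}{c_j^2 N^2},
\end{align*}
i.e., after multiplying by $N^2 c_j^2 c_{q-1}^2$,
\begin{align*}
q^2 (q-1-j)^2 + c_j^2 \;<\; c_{q-1}^2.
\end{align*}
Since $c_{q-1}^2 - c_j^2 = (c_{q-1} - c_j)(c_{q-1} + c_j) = (q-1-j)(2Aq + j + q - 1)$, the inequality to verify is
\begin{align*}
q^2(q-1-j) \;<\; 2Aq + j + q - 1,
\end{align*}
after dividing the common factor $(q-1-j) \ge 1$ through — wait, one must divide carefully: the inequality is $q^2(q-1-j)^2 < (q-1-j)(2Aq+j+q-1)$, and since $q-1-j \ge 1 > 0$ we may divide to get $q^2(q-1-j) < 2Aq + j + q - 1$. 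The left side is at most $q^2(q-2)$ (taking $j = 1$), and the right side is at least $2Aq \ge q^3$ using $A \ge q^2/2$; since $q^3 > q^2(q-2) = q^3 - 2q^2$, this holds comfortably. This shows $P$ is in the open disk $D^{(j)}$, completing part (2). I would present the computation in this order: set up the reduced inequality, reduce to $q^2(q-1-j) < 2Aq + j + q - 1$, then bound both sides. The main obstacle is purely bookkeeping — making sure the division by $(q-1-j)$ and the direction of every inequality is tracked correctly, and that part (1)'s "not covered" and part (2)'s "covered" computations are genuinely consistent (they are, because in (1) we compare $P_j$ against disks of index $l \ne j$ with $j$ arbitrary, whereas in (2) we compare the specific $P = P_{q-1}$, which is the lowest north pole, against the larger disks $D^{(j)}$, $j < q-1$).
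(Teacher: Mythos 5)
Your part (2) is correct and is essentially the paper's computation (the reduced inequality $q^2(q-1-j)<2Aq+j+q-1$ is, after expanding, the same positivity statement $(2A-q^2)q+(j+1)q^2+(j-1)>0$ that appears in the paper). The problem is part (1): the claim that the north pole $P_j$ of $D^{(j)}$ is not in the interior of $D^{(l)}$ for $l<j$ is false, and your own part-(2) computation disproves it. Running that computation with general indices $l<j$ in place of $(j,q-1)$ gives
\begin{align*}
(x_j-x_l)^2+r_j^2-r_l^2\eq\frac{(j-l)}{N^2(Aq+j)^2(Aq+l)^2}\Big(q^2(j-l)-(2Aq+j+l)\Big),
\end{align*}
and for $1\leq l<j\leq q-1$ the first factor is positive while the second is negative, since $q^2(j-l)\leq q^2(q-2)<q^3\leq 2Aq$. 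So $P_j$ lies in the \emph{interior} of $D^{(l)}$ for every $l<j$; the only index for which your witness works is $j=1$. This is not a bookkeeping slip but the whole point of the construction: the disks are engineered so that north poles of later disks are swallowed by earlier ones (that is what makes $c(N)$ large), so the north pole can never certify an uncovered arc for $j\geq 2$.

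The fix requires a different witness point, and this is the one genuine idea in part (1) that your plan is missing. The paper takes $Q_j$ to be the point where the tangent line from $\tfrac{n}{N}$ to the circle $\partial D^{(j)}$ touches it; after normalizing $\tfrac{n}{N}\mapsto 0$ and $N\mapsto 1$ this is
\begin{align*}
Q_j\eq\frac{q^2-1}{q(Aq+j)}+\frac{\sqrt{q^2-1}}{q(Aq+j)}\cdot i,
\end{align*}
a point on $\partial D^{(j)}$ strictly below and to the left of the north pole. All the circles $D\big(\tfrac{q}{Aq+j},\tfrac{1}{Aq+j}\big)$ are tangent to the single line $y=x/\sqrt{q^2-1}$ through the origin, each at its own point $Q_j$, so each $Q_j$ lies outside every other disk; concretely one checks
\begin{align*}
\bigg(x_j-\frac{q}{Aq+l}\bigg)^2+y_j^2-\bigg(\frac{1}{Aq+l}\bigg)^2\eq\frac{(q^2-1)(l-j)^2}{(Aq+j)^2(Aq+l)^2}\;>\;0
\end{align*}
for all $l\neq j$, and openness of the complement then yields the positive-length arc. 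Your treatment of the case $l>j$ happens to be salvageable (the displayed quantity is positive there even at the north pole), but without replacing the witness for $l<j$ part (1) does not go through.
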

\begin{proof}
Note that scaling or shifting all $D^{(j)}$ simultaneously does not change their relative positions. Hence, we may replace each $D^{(j)}$ with
\begin{align*}
    D\bigg(\frac{q}{Aq+j}\thin,\thin\frac{1}{Aq+j}\bigg)\eq\big\{Nz-n\;\big|\; z\in D^{(j)}\big\}.
\end{align*}
Now, $P=\frac{q}{Aq+q-1}+\frac{1}{Aq+q-1}\cdot i$ is the north pole of $D^{(q-1)}$.

For (1), it suffices to show that for each $1\leq j\leq q-1$, there exists a point $Q_j=x_j+y_ji$ on the boundary of $D^{(j)}$ such that $Q_j\notin D^{(l)}$ for all $l\neq j$. Since $\C - D^{(l)}$ is open, the existence of such $Q_j$ leads to the existence of such arc. We will construct such $Q_j$ explicitly. Let
\begin{align}
\label{xj-yj-defn}
    x_j\eq\frac{q^2-1}{q(Aq+j)}\quad\text{ and }\quad y_j\eq\frac{\sqrt{q^2-1}}{q(Aq+j)}.
\end{align}
Indeed, it is easy to check that
\begin{align}
\label{xj-yj-circle-eqn}
    \bigg(x_j-\frac{q}{Aq+j}\bigg)^2+y_j^2\eq\bigg(\frac{1}{Aq+j}\bigg)^2,
\end{align}
so $Q_j=x_j+y_ji$ lies on the boundary of $D^{(j)}$. Now, let $l\neq j$ with $1\leq l\leq q-1$. Then,
\begin{align*}
    &\bigg(x_j-\frac{q}{Aq+l}\bigg)^2+y_j^2-\bigg(\frac{1}{Aq+l}\bigg)^2 \\
    \;\stackrel{(\ref{xj-yj-circle-eqn})}{=}\;&\bigg(\frac{2q}{Aq+j}-\frac{2q}{Aq+l}\bigg)\cdot x_j +\bigg(\frac{1-q^2}{(Aq+j)^2}-\frac{1-q^2}{(Aq+l)^2}\bigg) \\
    \;\stackrel{(\ref{xj-yj-defn})}{=}\;&\frac{2q(l-j)}{(Aq+j)(Aq+l)}\cdot \frac{q^2-1}{q(Aq+j)}+\frac{(1-q^2)(l-j)(2Aq+l+j)}{(Aq+j)^2(Aq+l)^2} \\
    \eq&\frac{(q^2-1)(l-j)}{(Aq+j)^2(Aq+l)^2}\big(2(Aq+l)-(2Aq+l+j)\big) \\
    \eq&\frac{(q^2-1)(l-j)^2}{(Aq+j)^2(Aq+l)^2}\;>\;0.
\end{align*}
Hence, $Q_j\notin D^{(l)}$ for all $l\neq j$. The result then follows.

For (2), write
\begin{align}
\label{x0-y0-defn}
    P\eq\frac{q}{Aq+q-1}+\frac{1}{Aq+q-1}\cdot i\;=:\;x_0+y_0i.
\end{align}
Note that
\begin{align}
\label{x0-y0-circle-eqn}
    \bigg(x_0-\frac{q}{Aq+q-1}\bigg)^2+y_0^2\eq\bigg(\frac{1}{Aq+q-1}\bigg)^2
\end{align}
since $P$ lies on the boundary of $D^{(q-1)}$. Now, fix $j$ with $1\leq j\leq q-2$. Then,
\begin{align*}
    &\bigg(x_0-\frac{q}{Aq+j}\bigg)^2+y_0^2-\bigg(\frac{1}{Aq+j}\bigg)^2 \\
    \;\stackrel{(\ref{x0-y0-circle-eqn})}{=}\;&\bigg(\frac{2q}{Aq+q-1}-\frac{2q}{Aq+j}\bigg)\cdot x_0 +\bigg(\frac{1-q^2}{(Aq+q-1)^2}-\frac{1-q^2}{(Aq+j)^2}\bigg) \\
    \;\stackrel{(\ref{x0-y0-defn})}{=}\;&\frac{2q(j-q+1)}{(Aq+q-1)(Aq+j)}\cdot\frac{q}{Aq+q-1}+\frac{(1-q^2)(j-q+1)(2Aq+j+q-1)}{(Aq+q-1)^2(Aq+j)^2} \\
    \eq&\frac{j-q+1}{(Aq+q-1)^2(Aq+j)^2}\big(2q^2(Aq+j)+(1-q^2)(2Aq+j+q-1)\big).
\end{align*}
Here, $j-q+1<0$, $(Aq+q-1)^2(Aq+j)^2>0$, and
\begin{align*}
    2q^2(Aq+j)+(1-q^2)(2Aq+j+q-1)\eq(2A-q^2)q+(j+1)q^2+(j-1)\;>\;0
\end{align*}
since $A\geq q^2/2$ and $1\leq j\leq q-2$. Hence, for $1\leq j\leq q-2$,
\begin{align*}
    \bigg(x_0-\frac{q}{Aq+j}\bigg)^2+y_0^2-\bigg(\frac{1}{Aq+j}\bigg)^2<\;0,
\end{align*}
so $P$ lies in the interior of $D^{(j)}$ for all $1\leq j\leq q-2$.
\end{proof}

\Cref{famdisk} essentially shows that we can obtain $q-1$ disks such that
\begin{enumerate}[label=(\arabic*)]
    \item each of them is not entirely covered by the union of the rest, and
    \item one of their north poles is covered by all other disks, i.e., one of the north poles has complexity equal to $q-2$.
\end{enumerate}
Hence, if we can force all $D^{(j)}$ to lie strictly on the boundary of $\cl{R}_N$ by putting extra assumptions on $N$, then we can conclude that $N$ with those extra assumptions must satisfy $c(N)\geq q-2$. To proceed with the idea, we need the following technical lemma.

\begin{lemma}
\label{pdivs1s2}
Fix an integer $n\geq 1$. Let $S_1,S_2$ be two finite nonempty subsets of
\begin{align*}
    \{(a,b)\in\Z^2\mid 0\leq b\leq a,\;\gcd(a,b)=1\}
\end{align*}
with $S_1\cap S_2=\emptyset$ and let
\begin{align*}
    C'\eq\max_{(a_i,b_i)\in S_i}|a_2b_1-a_1b_2|.
\end{align*}
Let
\begin{align*}
    T_i(n)\eq\{an+b\mid (a,b)\in S_i\}
\end{align*}
for $i=1,2$. Then, if a prime $p$ satisfies that $p\Z\cap T_i(n)\neq\emptyset$ for both $i=1,2$, then $p\leq C'$. 
\end{lemma}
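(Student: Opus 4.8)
The plan is to use the standard trick of taking an integer combination of the two divisibility relations that eliminates $n$. Suppose a prime $p$ satisfies $p\Z\cap T_i(n)\neq\emptyset$ for both $i=1,2$. Unwinding the definitions, there exist $(a_1,b_1)\in S_1$ and $(a_2,b_2)\in S_2$ with $p\mid a_1n+b_1$ and $p\mid a_2n+b_2$. Then $p$ divides $a_2(a_1n+b_1)-a_1(a_2n+b_2)=a_2b_1-a_1b_2$. If this integer is nonzero, then a prime dividing it is at most its absolute value, so $p\leq|a_2b_1-a_1b_2|\leq C'$, which is exactly the claim.

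Therefore the only thing left is to rule out $a_2b_1-a_1b_2=0$, i.e.\ $a_1b_2=a_2b_1$. I would argue that this, together with $\gcd(a_1,b_1)=\gcd(a_2,b_2)=1$ and $0\leq b_i\leq a_i$, forces $(a_1,b_1)=(a_2,b_2)$, contradicting $S_1\cap S_2=\emptyset$. Concretely: if $b_1=0$ then $\gcd(a_1,0)=1$ gives $a_1=1$, whence $a_1b_2=a_2b_1=0$ yields $b_2=0$ and then $a_2=1$; the case $b_2=0$ is symmetric. If instead $b_1,b_2>0$ (so $a_1,a_2>0$ as well), then from $a_1b_2=a_2b_1$ and $\gcd(a_1,b_1)=1$ we get $a_1\mid a_2$, and symmetrically $a_2\mid a_1$, so $a_1=a_2$ by positivity and then $b_1=b_2$. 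In every case the two pairs coincide, which is impossible.

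The step I expect to need the most care is exactly this elimination of the degenerate case $a_2b_1=a_1b_2$: it is the only place where the coprimality hypothesis and the disjointness $S_1\cap S_2=\emptyset$ are used, and one has to keep track of the boundary behaviour when some $b_i=0$. Everything else is the one-line linear combination, together with the remark that $C'\geq1$ holds automatically since it is a maximum of absolute values of nonzero integers over nonempty index sets, so the conclusion $p\leq C'$ is meaningful.
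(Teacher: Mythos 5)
Your proof is correct and follows essentially the same route as the paper: take the linear combination $a_2(a_1n+b_1)-a_1(a_2n+b_2)=a_2b_1-a_1b_2$ and then rule out the degenerate case $a_2b_1=a_1b_2$ using coprimality and the disjointness of $S_1$ and $S_2$. The only cosmetic difference is that you deduce $a_1\mid a_2$ and $a_2\mid a_1$ where the paper deduces $b_1\mid b_2$ and $b_2\mid b_1$; both settle the degenerate case identically.
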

\begin{remark}
This lemma can be viewed as a generalization of \Cref{pleq2}, \Cref{pleq5}, and \Cref{pleq5n}. It is important that here the constant $C'$ only depends on $S_1$ and $S_2$ but not on $n$.
\end{remark}
\begin{proof}
Let $p$ be a prime such that $p\Z\cap T_i(n)\neq\emptyset$ for both $i=1,2$. Then, there exists $(a_i,b_i)\in S_i$ such that $p\divides a_1n+b_1$ and $p\divides a_2n+b_2$. In particular,
\begin{align*}
    p\divides (a_1n+b_1)a_2-(a_2n+b_2)a_1=a_2b_1-a_1b_2.
\end{align*}
Hence, to prove that $p\leq C'=\max_{(a_i,b_i)\in S_i}|a_2b_1-a_1b_2|$, it suffices to show that $a_2b_1\neq a_1b_2$ for all $(a_i,b_i)\in S_i$.

Suppose that there exist $(a_i,b_i)\in S_i$ such that $a_2b_1=a_1b_2$. If $b_1=0$, then we must have $(a_1,b_1)=(a_2,b_2)=(1,0)$, contradicting that $S_1\cap S_2=\emptyset$. Hence, suppose that $b_1\neq 0$ and similarly $b_2\neq 0$. As $\gcd(a_1,b_1)=\gcd(a_2,b_2)=1$, we have $b_1|b_2$ and $b_2|b_1$. This implies that $b_1=b_2$ and hence $a_1=a_2$ as well, contradicting that $S_1\cap S_2=\emptyset$. The result then follows.
\end{proof}

Now, let
\begin{align*}
    A'\eq\bigg\lceil\frac{q(Aq+q-1)}{\sqrt{q^2-1}}\bigg\rceil.
\end{align*}
Let
\begin{align*}
    S_1\eq\{(Aq+j,q)\mid 1\leq j\leq q-1\}
\end{align*}
(note that $\gcd(Aq+j,q)=1$ for $1\leq j\leq q-1$) and
\begin{align*}
    S_2\eq\{(a,b)\in\Z^2\mid 0\leq b\leq a,\;\gcd(a,b)=1,\;a\leq A'\}\setminus S_1.
\end{align*}
For $i=1,2$, let
\begin{align*}
    T_i(n)\eq\{an+b\mid (a,b)\in S_i\}.
\end{align*}
Let
\begin{align*}
    B&\eq|S_2| \\
    C'&\eq\max_{(a_i,b_i)\in S_i}|a_2b_1-a_1b_2| \\
    C&\eq\max\{C', A'\}+1.
\end{align*}

\begin{lemma}
\label{twogcd}
Let $N>1$ be an integer. Suppose that $\omega(N)\geq B$ and $p_1(N)\geq C$. Then, there exists an integer $n$ with $0\leq n\leq N-1$ such that
\begin{enumerate}[label=\rm{(\arabic*)}]
    \item $\gcd(N,t_1)=1$ for all $t_1\in T_1(n)$;
    \item $\gcd(N,t_2)>1$ for all $t_2\in T_2(n)$.
\end{enumerate}
\end{lemma}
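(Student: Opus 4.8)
The strategy is a Chinese-Remainder-Theorem construction, exactly in the spirit of the proof of \Cref{notouts}, but now packaged through the abstract \Cref{pdivs1s2}. We want an $n$ so that every element of $T_1(n)$ is coprime to $N$ while every element of $T_2(n)$ shares a prime factor with $N$. The key observation supplied by \Cref{pdivs1s2} is that, because $S_1$ and $S_2$ are disjoint subsets of the set of coprime pairs $(a,b)$ with $0 \le b \le a$, any prime $p$ that simultaneously divides some element of $T_1(n)$ and some element of $T_2(n)$ must satisfy $p \le C'$; hence any prime factor of $N$ exceeding $C'$ can ``ruin'' at most one of the two conditions, in the sense that it can be forced to divide an element of $T_2(n)$ without ever dividing an element of $T_1(n)$.

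\textbf{Step 1: set up the CRT system.} Enumerate $S_2 = \{(\alpha_1,\beta_1),\dots,(\alpha_B,\beta_B)\}$. Since $\omega(N) \ge B$, we have at least $B$ distinct prime factors $p_1,\dots,p_B,\dots$ of $N$ at our disposal. For each $1 \le k \le B$, the pair $(\alpha_k,\beta_k)$ satisfies $\alpha_k \le A' < C \le p_1 \le p_k$, so in particular $\gcd(\alpha_k,p_k)=1$ and the congruence $\alpha_k x + \beta_k \equiv 0 \pmod{p_k}$ has a solution $x \equiv a_k \pmod{p_k}$; fix such $a_k$ with $0 \le a_k \le p_k-1$. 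For the remaining prime factors $p_{B+1},\dots,p_{\omega(N)}$ of $N$ (if any), set $a_k \equiv 0 \pmod{p_k}$. By CRT there is a unique $n$ with $0 \le n \le \prod_{k=1}^{\omega(N)} p_k - 1 \le N-1$ and $n \equiv a_k \pmod{p_k}$ for all $k$. By construction $p_k \divides \alpha_k n + \beta_k$ for $1 \le k \le B$, and since $T_2(n) = \{\alpha_k n + \beta_k \mid 1 \le k \le B\}$, condition (2) holds: every $t_2 \in T_2(n)$ is divisible by some $p_k$, hence $\gcd(N,t_2) > 1$.

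\textbf{Step 2: verify condition (1).} Let $t_1 \in T_1(n)$, so $t_1 = (Aq+j)n + q$ for some $1 \le j \le q-1$, and suppose a prime $p \divides N$ also divides $t_1$; we must derive a contradiction (unless $t_1$ is forced to be a unit, which it is not since $q \ge 1$, but $t_1$ could still a priori be divisible by a small prime). Two cases. If $p > C'$: then $p$ cannot divide any element of $T_2(n)$ by \Cref{pdivs1s2}, but $p \divides N$ forces $p$ to be one of the $p_k$, and for $1 \le k \le B$ we arranged $p_k \divides \alpha_k n + \beta_k \in T_2(n)$ — contradiction — while for $k > B$ we have $p_k \divides n$, and then $p_k \divides t_1 = (Aq+j)n+q$ would force $p_k \divides q$, impossible as $p_k \ge p_1 \ge C > q$ (note $C > A' \ge q(Aq+q-1)/\sqrt{q^2-1} > q$). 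If $p \le C'$: then $p < C \le p_1(N)$, so $p$ is not a prime factor of $N$ at all — contradiction. Hence no prime factor of $N$ divides $t_1$, i.e. $\gcd(N,t_1) = 1$, which is condition (1).

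\textbf{Main obstacle.} The only genuinely delicate point is bookkeeping: one must be careful that the primes $p_k$ used to kill $T_2$ are genuinely larger than $C'$ and than $q$ (so that Step 2's case analysis is clean), that $\gcd(\alpha_k, p_k) = 1$ so the congruences are solvable, and that the resulting $n$ indeed lies in $[0, N-1]$ — all of which follow from the hypotheses $\omega(N) \ge B$ and $p_1(N) \ge C = \max\{C', A'\} + 1$. A minor subtlety is that when $p \le C'$ we also need $p \ne p_1(N)$; this is exactly why $C$ is defined with the ``$+1$'', ensuring $p_1(N) \ge C > C'$. Everything else is the routine CRT-plus-\Cref{pdivs1s2} argument already rehearsed in Section 6.
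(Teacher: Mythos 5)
Your proof is correct and follows essentially the same route as the paper: enumerate $S_2$, use the Chinese remainder theorem to force $p_k\divides \alpha_k n+\beta_k$ for $1\le k\le B$ and $p_k\divides n$ for $k>B$, and then invoke \Cref{pdivs1s2} together with $p_1(N)\geq C>C'$ to rule out any prime factor of $N$ dividing an element of $T_1(n)$. The only cosmetic difference is that for the primes $p_k$ with $k>B$ you argue directly that $p_k\divides n$ and $p_k\divides (Aq+j)n+q$ would force $p_k\divides q$, whereas the paper notes $(1,0)\in S_2$ so that $n\in T_2(n)$ and applies \Cref{pdivs1s2} uniformly; both are valid.
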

\begin{proof}
Write $p_i=p_i(N)$ as before. Let $\{(a^{(i)},b^{(i)})\}_{i=1}^B$ be an enumeration of $S_2$. For each $p_i$ with $1\leq i\leq B$, note that $p_i\geq C>A'\geq a^{(i)}>0$, so $p_i\nmid a^{(i)}$. Hence, there exists an integer $n_i$ with $0\leq n_i\leq p_i-1$ such that
\begin{align*}
    p_i\divides a^{(i)}n_i+b^{(i)}.
\end{align*}
For each $p_i$ with $i>B$, let $n_i=0$ so that $p_i\divides n_i$. By the Chinese remainder theorem, there exists an integer $n$ with $0\leq n\leq\prod_{i=1}^{\omega(N)}p_i-1\leq N-1$ such that
\begin{align*}
    p_i\divides a^{(i)}n+b^{(i)}\text{ for }1\leq i\leq B,\text{ and }p_i\divides n\text{ for }i>B.
\end{align*}
In particular, $(a^{(i)},b^{(i)})\in S_2$ and $(1,0)\in S_2$, so $p_i\Z\cap T_2(n)\neq\emptyset$ for all $1\leq i\leq \omega(N)$. Since $p_i\geq C>C'$, by \Cref{pdivs1s2}, we must have $p_i\Z\cap T_1(n)=\emptyset$, i.e., $p_i\nmid t_1$ for all $t_1\in T_1(n)$ and $1\leq i\leq \omega(N)$.

In conclusion, we obtain an integer $n$ with $0\leq n\leq N-1$ such that
\begin{enumerate}[label=(\arabic*)]
    \item for any $t_1\in T_1(n)$, $p_i\nmid t_1$ for all $1\leq i\leq \omega(N)$, and hence $\gcd(N,t_1)=1$, and
    \item for any $t_2\in T_2(n)$, $t_2=a^{(i)}n+b^{(i)}$ for some $1\leq i\leq B$, so $p_i\divides t_2$ by the construction, and hence $\gcd(N,t_2)>1$.
\end{enumerate}
The result then follows.
\end{proof}

\begin{proposition}
\label{compgeqc}
Let $(N,n)$ be a pair. Suppose that
\begin{enumerate}[label=\rm{(\arabic*)}]
    \item $\gcd(N,t_1)=1$ for all $t_1\in T_1(n)$, and
    \item $\gcd(N,t_2)>1$ for all $t_2\in T_2(n)$.
\end{enumerate}
Then, $c(N,n)\geq q-2$.
\end{proposition}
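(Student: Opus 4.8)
The plan is to show that the two hypotheses force all $q-1$ disks $D^{(1)},\dots,D^{(q-1)}$ to lie strictly on the boundary of $\cl{R}_{N,n}$, i.e.\ $\tfrac{(Aq+j)n+q}{(Aq+j)N}\in S_{N,n}$ for every $1\leq j\leq q-1$. Granting this, \Cref{famdisk} closes the argument at once: part (2) says the north pole $P=P_{\frac{(Aq+q-1)n+q}{(Aq+q-1)N}}$ of $D^{(q-1)}$ lies in the interior of $D^{(j)}=D_{\frac{(Aq+j)n+q}{(Aq+j)N}}$ for each $1\leq j\leq q-2$, so $P$ is covered by at least these $q-2$ distinct disks of the minimal family $\{D_\alpha\}_{\alpha\in S_{N,n}}$, whence $c(N,n)\geq c(P)\geq q-2$.

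First I would record that each $D^{(j)}$ really is one of the disks comprising $\cl{R}_{N,n}$: hypothesis (1) gives $\gcd(N,(Aq+j)n+q)=1$, and since $\gcd(Aq+j,q)=1$ and $\tfrac{n}{N}<\tfrac{(Aq+j)n+q}{(Aq+j)N}<\tfrac{n+1}{N}$ (using $q<Aq+j$), the fraction $\tfrac{(Aq+j)n+q}{(Aq+j)N}$ is reduced, has $N$-divisible denominator, and lies in the required interval, so $D^{(j)}=D_{\frac{(Aq+j)n+q}{(Aq+j)N}}\subseteq\cl{R}_{N,n}$. (The same remark applied to $(1,0),(1,1)\in S_2$ shows hypothesis (2) already forces $(N,n)$ to be bad, consistent with the running assumption of this section.)

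The core step is a dichotomy for the disks appearing in $\cl{R}_{N,n}$: any such disk is $D\big(\tfrac{an+b}{aN},\tfrac{1}{aN}\big)$ for integers $a\geq 1$, $0\leq b\leq a$ with $\gcd(a,b)=1$ and $\gcd(N,an+b)=1$; if its denominator is $\leq A'N$ then $a\leq A'$, so $(a,b)\in S_1\sqcup S_2$, and membership in $S_2$ is impossible since it would force $\gcd(N,an+b)>1$ by hypothesis (2); hence $(a,b)\in S_1$ and the disk is one of $D^{(1)},\dots,D^{(q-1)}$. Therefore every $D_\beta$, $\beta\in S_{N,n}$, is either one of the $D^{(l)}$ or has denominator $>A'N$, hence radius $<\tfrac{1}{A'N}$; being centered on the real axis, such a small disk meets only points of imaginary part $<\tfrac{1}{A'N}$. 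Now fix $j$ and let $Q_j\in\partial D^{(j)}$ be the witness point of \Cref{famdisk}(1), which lies in no $D^{(l)}$ with $l\neq j$; in the original (unscaled) coordinates its imaginary part is $\tfrac{\sqrt{q^2-1}}{q(Aq+j)N}$, which is $\geq\tfrac{1}{A'N}$ exactly because $A'=\big\lceil\tfrac{q(Aq+q-1)}{\sqrt{q^2-1}}\big\rceil\geq\tfrac{q(Aq+j)}{\sqrt{q^2-1}}$. Hence $Q_j$ lies in no small disk either, so $Q_j\notin D_\beta$ for every $\beta\in S_{N,n}$ with $\beta\neq\tfrac{(Aq+j)n+q}{(Aq+j)N}$; since $Q_j\in D^{(j)}\subseteq\cl{R}_{N,n}=\bigcup_{\beta\in S_{N,n}}D_\beta$, this forces $\tfrac{(Aq+j)n+q}{(Aq+j)N}\in S_{N,n}$, for all $1\leq j\leq q-1$. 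Combined with the reduction above, this proves the proposition.

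I expect the only genuinely delicate point to be this dichotomy, namely the bookkeeping turning the abstract definitions of $S_1$, $S_2$ and $A'$ into the two geometric facts ``every other disk of $\cl{R}_{N,n}$ has denominator $>A'N$'' and ``$A'$ is large enough that such a disk cannot reach the witness points $Q_j$''; the rest is a direct application of \Cref{famdisk} together with the minimality characterization of $S_{N,n}$. (The degenerate case $q=2$, where the statement reads $c(N,n)\geq 0$, requires nothing.)
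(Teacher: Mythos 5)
Your proposal is correct and follows essentially the same route as the paper's proof: use hypothesis (2) to exclude every disk indexed by $S_2$, so that any disk of $\cl{R}_{N,n}$ other than the $D^{(j)}$ has radius less than $\tfrac{1}{A'N}$ and hence cannot reach the witness points $Q_j$ of \Cref{famdisk}(1), forcing each $D^{(j)}$ into $S_{N,n}$, after which \Cref{famdisk}(2) gives $c(P)\geq q-2$. Your write-up is in fact slightly more careful than the paper's on the bookkeeping (reducedness of $\tfrac{(Aq+j)n+q}{(Aq+j)N}$, the interval check, and the minimality characterization of $S_{N,n}$), but the argument is the same.
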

\begin{proof}
Note that $n,n+1\in T_2(n)$, so $\gcd(N,n)>1$ and $\gcd(N,n+1)>1$. Hence, $\cl{R}_{N,n}$ is a bad region. Since $\gcd(N,t_1)=1$ for all $t_1\in T_1(n)$, $D_{\frac{an+b}{aN}}$ lies in $\cl{R}_{N,n}$ for all $(a,b)\in S_1$, i.e., $D^{(j)}$ lies in $\cl{R}_{N,n}$ for all $1\leq j\leq q-1$; since $\gcd(N,t_2)>1$ for all $t_2\in T_2(n)$, any disk of the form $D\big(\frac{an+b}{aN},\frac{1}{aN}\big)$ for $(a,b)\in S_2$ will not appear in $\cl{R}_{N,n}$.
In particular, any other disks $D_{\frac{an+b}{aN}}$ that appear in $\cl{R}_{N,n}$ must have $a>A'$ and hence
\begin{align*}
    \frac{1}{a}\;<\;\frac{1}{A'}\;\leq\;\frac{\sqrt{q^2-1}}{q(Aq+q-1)}\;\leq\;\frac{\sqrt{q^2-1}}{q(Aq+j)}\eq y_j
\end{align*}
for all $1\leq j\leq q-1$, implying that $Q_j=x_j+y_ji\notin D_{\frac{an+b}{aN}}$. Here $Q_j$ is the point introduced in the proof of \Cref{famdisk} satisfying that $Q_j$ lies on the boundary of $D^{(j)}$ but $Q_j\notin D^{(l)}$ for any $l\neq j$. To summarize, for all $1\leq j\leq q-1$, 
\begin{enumerate}
    \item $Q_j$ lies on the boundary of $D^{(j)}$, and
    \item $Q_j\in D_{\frac{an+b}{aN}}$ for any disk $D_{\frac{an+b}{aN}}\subseteq\cl{R}_{N,n}$ if and only if $D_{\frac{an+b}{aN}}=D^{(j)}$.
\end{enumerate}
In particular, each $D^{(j)}$ for $1\leq j\leq q-1$ lies strictly on the boundary of $\cl{R}_{N,n}$. By \Cref{famdisk}, the north pole of $D^{(q-1)}=D_\frac{(Aq+q-1)n+q}{(Aq+q-1)N}$ is strictly covered by $D^{(j)}$ for all $1\leq j\leq q-2$. Hence, $c(N,n)\geq q-2$.
\end{proof}

\begin{corollary}
\Cref{compinfty} holds, i.e., if
\begin{align*}
    \omega(N)\;\geq\;\frac{q^6}{2} + 3q^4 - \frac{3q^3}{2} + \frac{9q^2}{2} - \frac{11q}{2} + 3
\end{align*}
and
\begin{align*}
    p_1(N)\;\geq\;\frac{q^6}{2} + 3q^4 - 2q^3 + \frac{9q^2}{2} - 7q + 3,
\end{align*}
then $c(N)\geq q-2$.
\end{corollary}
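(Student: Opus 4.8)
The plan is to reduce the statement to the two quantitative results already established and then to bound the auxiliary constants $B$ and $C$ explicitly. By \Cref{twogcd}, as soon as $\omega(N)\ge B$ and $p_1(N)\ge C$ there is an integer $n$ with $0\le n\le N-1$ meeting the two gcd conditions of \Cref{compgeqc}, and that proposition gives $c(N,n)\ge q-2$, hence $c(N)\ge q-2$ by \Cref{badregion}. So it suffices to show that the hypotheses of the statement force $\omega(N)\ge B$ and $p_1(N)\ge C$, and for this it is enough to prove
\[
  B\;\le\;\tfrac{q^6}{2}+3q^4-\tfrac{3q^3}{2}+\tfrac{9q^2}{2}-\tfrac{11q}{2}+3
  \qquad\text{and}\qquad
  C\;\le\;\tfrac{q^6}{2}+3q^4-2q^3+\tfrac{9q^2}{2}-7q+3 .
\]

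First I would unwind the definitions. From $A=\lceil q^2/2\rceil\le(q^2+1)/2$ one gets $Aq+q-1\le(q^3+3q-2)/2$; and since $q^2-1\ge q^2/4$ for every prime $q$, we have $q/\sqrt{q^2-1}\le 2$, so $q(Aq+q-1)/\sqrt{q^2-1}\le q^3+3q-2$, and as the right side is an integer this gives $A'\le q^3+3q-2$. For $B=|S_2|$, note that $S_1$ is contained in $E:=\{(a,b):0\le b\le a\le A',\ \gcd(a,b)=1\}$ (indeed $\gcd(Aq+j,q)=1$ for $1\le j\le q-1$, and $Aq+j\le Aq+q-1\le A'$), so $B=|E|-|S_1|=|E|-(q-1)$. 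Counting $E$ by first coordinate — the two pairs $(1,0),(1,1)$ together with $\phi(a)$ pairs for each $2\le a\le A'$ — gives $|E|=1+\sum_{a=1}^{A'}\phi(a)\le 1+\tfrac12 A'(A'+1)$, whence $B\le\tfrac12 A'(A'+1)+2-q$. Since the right side is increasing in $A'$, substituting $A'\le q^3+3q-2$ and expanding yields exactly the first displayed bound.

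Next, $C=\max\{C',A'\}+1$, so I would bound $C'$. Each $(a_1,b_1)\in S_1$ has $a_1\le Aq+q-1$ and $b_1=q$, and each $(a_2,b_2)\in S_2$ has $0\le b_2\le a_2\le A'$ with $\gcd(a_2,b_2)=1$; a brief case analysis (either $b_2=a_2$, which forces $a_2=b_2=1$, or $b_2\le a_2-1\le A'-1$, in which case $|a_2b_1-a_1b_2|=|qa_2-a_1b_2|\le(Aq+q-1)(A'-1)$, using $qA'\le(Aq+q-1)(A'-1)$, which follows from $A'\ge Aq+q-1$) shows $C'\le(Aq+q-1)(A'-1)$; as this also exceeds $A'$, we get $C\le(Aq+q-1)(A'-1)+1$. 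Substituting $Aq+q-1\le(q^3+3q-2)/2$ and $A'-1\le q^3+3q-3$ and expanding, the right side is at most $\tfrac{q^6}{2}+3q^4-\tfrac52 q^3+\tfrac{9q^2}{2}-\tfrac{15q}{2}+4$, which is $\le\tfrac{q^6}{2}+3q^4-2q^3+\tfrac{9q^2}{2}-7q+3$ since the difference equals $\tfrac12 q^3+\tfrac12 q-1\ge 0$ for $q\ge 2$.

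The whole argument is bookkeeping, and the only point calling for care is keeping the elementary estimates — the bound on $A'$ and the lattice-point count for $|E|$ in particular — tight enough that the resulting polynomials in $q$ do not overshoot the stated thresholds in their lower-order terms; note that using $\phi(a)\le a$ (rather than anything sharper) is exactly what makes the $B$-estimate come out equal to the first polynomial. Finally, the case $q=2$ is vacuous, since then $q-2=0$.
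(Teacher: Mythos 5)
Your proposal is correct and follows essentially the same route as the paper: bound $A'$ by roughly $q^3+3q$, deduce the stated polynomial bounds for $B$ and $C$, and conclude via \Cref{twogcd} and \Cref{compgeqc}. Your individual estimates differ only in bookkeeping (a slightly sharper bound on $A'$ compensating for using $\phi(a)\le a$ in the count of $S_2$, and a slightly looser bound on $C'$ whose extra $+q$ is absorbed by the $\tfrac12 q^3$ of slack), and all of them land within the stated thresholds.
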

\begin{proof}
Note that
\begin{align*}
    A'\eq\bigg\lceil\frac{q(Aq+q-1)}{\sqrt{q^2-1}}\bigg\rceil&\;\leq\;\frac{\sqrt{q^2}}{\sqrt{q^2-1}}\big(\lceil q^2/2\rceil q+q-1\big)+1 \\
    &\;\leq\;2\big((q^2+1)q/2+q-1\big)+1 \\
    &\;\leq\;q^3+3q-1.
\end{align*}
Hence,
\begin{align*}
    B\eq|S_2|&\eq\hash\{(a,b)\in\Z^2\mid 0\leq b\leq a,\;\gcd(a,b)=1,\;a\leq A'\}-|S_1| \\
    &\;\leq\;1+\frac{(A'-1)A'}{2}-(q-1) \\
    &\;\leq\;\frac{q^6}{2} + 3q^4 - \frac{3q^3}{2} + \frac{9q^2}{2} - \frac{11q}{2} + 3.
\end{align*}
For $C'$, note that for all $(a_i,b_i)\in S_i$,
\begin{align*}
    a_2b_1-a_1b_2\;\leq\;A' q-(Aq+1)\cdot 0\;\leq\; q^4+3q^2-q
\end{align*}
and
\begin{align*}
    a_1b_2-a_2b_1\;\leq\;(Aq+q-1)(A'-1)-1\cdot q\;\leq\;\frac{q^6}{2} + 3q^4 - 2q^3 + \frac{9q^2}{2} - 7q + 2
\end{align*}
Hence,
\begin{align*}
    C'\;\leq\;\frac{q^6}{2} + 3q^4 - 2q^3 + \frac{9q^2}{2} - 7q + 2
\end{align*}
and
\begin{align*}
    C\eq\max\{C', A'\}+1\;\leq\;\frac{q^6}{2} + 3q^4 - 2q^3 + \frac{9q^2}{2} - 7q + 3.
\end{align*}

Now, 
\begin{align*}
    \omega(N)\;\geq\;\frac{q^6}{2} + 3q^4 - \frac{3q^3}{2} + \frac{9q^2}{2} - \frac{11q}{2} + 3\;\geq\; B
\end{align*}
and
\begin{align*}
    p_1(N)\;\geq\;\frac{q^6}{2} + 3q^4 - 2q^3 + \frac{9q^2}{2} - 7q + 3\;\geq\; C.
\end{align*}
The result then follows from \Cref{twogcd} and \Cref{compgeqc}.
\end{proof}

\bibliographystyle{amsalpha}
\bibliography{ref}

\end{document}